\DeclareFontFamily{U}{matha}{\hyphenchar\font45}
\DeclareFontShape{U}{matha}{m}{n}{
      <5> <6> <7> <8> <9> <10> gen * matha
      <10.95> matha10 <12> <14.4> <17.28> <20.74> <24.88> matha12
      }{}
\DeclareSymbolFont{matha}{U}{matha}{m}{n}
\DeclareFontFamily{U}{mathx}{\hyphenchar\font45}
\DeclareFontShape{U}{mathx}{m}{n}{
      <5> <6> <7> <8> <9> <10>
      <10.95> <12> <14.4> <17.28> <20.74> <24.88>
      mathx10
      }{}
\DeclareSymbolFont{mathx}{U}{mathx}{m}{n}
\DeclareMathDelimiter{\vvvert}{0}{matha}{"7E}{mathx}{"17}
\newtheorem{theorem}{Theorem}[section]
\newtheorem{lemma}[theorem]{Lemma}
\newtheorem{corollary}[theorem]{Corollary}
\newtheorem{prop}[theorem]{Proposition}
\theoremstyle{definition}
\newtheorem{remark}[theorem]{Remark}
\numberwithin{equation}{section}
\DeclareFontFamily{U}{jkpmia}{}
\DeclareFontShape{U}{jkpmia}{m}{it}{<->s*jkpmia}{}
\DeclareFontShape{U}{jkpmia}{bx}{it}{<->s*jkpbmia}{}
\DeclareMathAlphabet{\mathfrak}{U}{jkpmia}{m}{it}
\SetMathAlphabet{\mathfrak}{bold}{U}{jkpmia}{bx}{it}
\newcommand{\R}{\ensuremath{\mathbb{R}}}
\newcommand{\N}{\ensuremath{\mathbb{N}}}
\newcommand{\cA}{\ensuremath{\mathcal{A}}}
\newcommand{\cC}{\ensuremath{\mathcal{C}}}
\newcommand{\cG}{\ensuremath{\mathcal{G}}}
\newcommand{\cJ}{\ensuremath{\mathcal{J}}}
\newcommand{\cP}{\ensuremath{\mathcal{P}}}
\newcommand{\cQ}{\ensuremath{\mathcal{Q}}}
\newcommand{\cK}{\ensuremath{\mathcal{K}}}
\newcommand{\cI}{\ensuremath{\mathcal{I}}}
\newcommand{\cL}{\ensuremath{\mathcal{L}}}
\newcommand{\be}{\begin{equation}}
\newcommand{\ee}{\end{equation}}
\newcommand{\ord}{\ensuremath{\mathcal{O}}}
\newcommand{\e}{\varepsilon}
\newcommand{\Tr}{\mathcal{T}} 
\newcommand{\op}{\mathcal{B}} 
\newcommand{\hatop}[1]{\op_{#1}}
\newcommand{\hatTr}[1]{\Tr_{#1}}
\newcommand{\s}{\ensuremath{{\pvec{s}}}}
\newcommand{\n}{\ensuremath{\mathbf{n}}}
\def\bsig{\bar\sigma}
\newcommand{\kk}{G}
\newcommand{\id}{\ensuremath{\mathrm{id}}}
\newcommand{\tripnorm}[1]{\vvvert #1\vvvert}
\newcommand{\bound}[2][]{%
\ensuremath{\ifthenelse{\isempty{#1}}{b(#2)}{b_{#1}(#2)}}%
}
\newcommand{\ub}[1][]{\bound[#1]{u}}
\newcommand{\supp}{\operatorname{supp}}
\newcommand{\diam}{\operatorname{diam}}
\newcommand{\dist}{\operatorname{dist}}
\newcommand\restr[2]{{
  \left.\kern-\nulldelimiterspace 
  #1 
  \vphantom{\big|} 
  \right|_{#2} 
  }}
\newcommand{\ds}{\ensuremath{\mathrm d\s}}
\newcommand{\dt}{\ensuremath{\mathrm dt}}
\newcommand{\dx}{\ensuremath{\mathrm dx}}
\newcommand{\dGamma}{\ensuremath{\mathrm d\Gamma}}
\newcommand{\TT}{\mathbb{T}}              
\newcommand{\cell}{C}                            
\newcommand{\WW}{\mathbb{W}}          
\newcommand{\bkk}{\ensuremath{\bm{\mathrm{G}}}} 
\newcommand{\bg}{\mathbf{g}}               
\newcommand{\PP}{\mathbb{P}}
\newcommand{\uc}{\underline{c}}
\newcommand{\D}{\textsc{D}}           
\newcommand{\pD}{\mathfrak{P}}   
\renewcommand{\S}{\textsc{S}}        
\newcommand{\pS}{\mathfrak{S}}     
\newcommand{\V}{V}                      
\newcommand{\U}{U}                      
\newcommand{\uh}{{\underline{h}}} 
\newcommand{\uT}{{\underline{T}}} 
\newcommand{\skeluh}{\partial\pD_\uh}    
\newcommand{\skelh}{\partial\pD_h}         
\newcommand{\solve}{\textsc{N-ASTI}}
\newcommand{\cost}{\textbf{cost}}
\newcommand{\ASTI}{\textsc{ASTI}}
\newcommand{\dw}[1]{{\color{black}{#1}}}
\newcommand{\om}[1]{{\color{black}{#1}}}
\begin{document}

\title{An Adaptive Nested Source Term Iteration for Radiative Transfer Equations}
\author{Wolfgang Dahmen\thanks{%
  This research was supported by
  the  NSF Grant    DMS 1720297,
  and by the SmartState and Williams-Hedberg Foundation.
  },
  Felix Gruber and Olga Mula\thanks{Corresponding author: mula@ceremade.dauphine.fr}
}
\date{}
\maketitle

\begin{abstract}
We propose a new approach to the numerical solution of radiative transfer equations with certified
a posteriori error bounds \om{for the  $L_2$ norm}. A key role is played by stable \emph{Petrov--Galerkin} type variational formulations of parametric transport equations
and corresponding radiative transfer equations. This allows us to formulate an iteration in a suitable, infinite dimensional function
space that is guaranteed to converge with a fixed error reduction per step. The numerical scheme is then based on approximately
realizing this iteration within dynamically updated accuracy tolerances that still ensure convergence to the exact solution.
To advance this iteration two operations need to be performed within suitably tightened accuracy tolerances. First, the global scattering operator needs to be approximately applied to the current iterate within a tolerance comparable to the current
accuracy level. Second, parameter dependent linear transport equations need to be solved, again at the required accuracy of the iteration. To ensure that the stage dependent error tolerances are met, one has to employ \emph{rigorous a posteriori error bounds} which, in our case, rest on a \emph{Discontinuous Petrov--Galerkin} (DPG) scheme. These a posteriori bounds are not only crucial for guaranteeing the convergence of the perturbed iteration but are also used to generate adapted parameter dependent spatial meshes. This turns out to significantly reduce overall computational complexity. Since the global operator is only applied, we avoid the need to solve linear systems with densely populated matrices. Moreover, the approximate application of the global scatterer is accelerated through low-rank approximation and matrix compression techniques.
The theoretical findings are illustrated and complemented by numerical experiments with
 non-trivial scattering kernels.
\end{abstract}

\noindent
\textbf{Keywords:} {DPG transport solver, iteration in function space,
 fast application of scattering operator, Hilbert--Schmidt decomposition, matrix compression, a posteriori bounds, kinetic problems, linear Boltzmann, radiative transfer}

\section{Introduction}
When dealing with problems giving rise to very complex  discretizations, one often tacitly \emph{assumes} that the
numerical output represents the corresponding continuous object reasonably well, without being, however,  able to actually quantify
output quality in any rigorous sense. Often  interest shifts then  towards accurately solving the (fixed) discrete problem
which by itself may indeed pose enormous challenges. Instead, the central objective of this article is  to put forward a
new algorithmic paradigm warranting \emph{error controlled computation}. By this we mean the deviation of the numerical result from
the \emph{exact  continuous solution} is certifiably quantified and set to meet a given target accuracy with respect to a
\emph{problem relevant norm}. It goes without saying that the ability to quantify the accuracy of forward simulations is a necessary
prerequisite of Uncertainty Quantification in general.  In this article we develop such methods for a regime of kinetic models, described below, for which to the best of our knowledge
error controlled schemes have so far not been available yet.

\subsection{Problem Formulation}
\label{ssec:1.1}
We consider certain \emph{kinetic models} describing the propagation of particles in a collisional medium modeling, e.\,g., heat transfer phenomena, neutron transport or medical imaging processes. We confine the subsequent discussion to simple \emph{monoenergetic radiative transfer models} which nevertheless exhibit the main obstructions to the design of efficient numerical methods for this problem class. Let $\D\subset \R^d$ be a bounded convex domain with piecewise $C^1$ boundary $\partial\D$, where $d\ge 1$.
Hence, for almost all $x\in \partial\D$ the outward normal $\n=\n(x)$ is well defined.
Furthermore, let $\S\subset \R^d$ denote the unit $(d-1)$-sphere
representing the directions in which particles propagate. Since we focus on the monoenergetic case, the particles have all the same kinetic energy (which we assume to be equal to 1) but note that more general compact sets describing the admissible transport velocity field are possible
and the subsequent developments generalize to a correspondingly wider scope of setups. In what follows, for $\s\in \S$
\begin{equation}
\label{Gammas}
\Gamma_-(\s) \coloneqq \{ x \in \partial \D \mid \s\cdot \n(x) < 0 \} \subset \partial\D,
\end{equation}
denotes the ``inflow-boundary'' for the given direction $\s$
while
\begin{equation}
\Gamma_- \coloneqq \{ (x,\s) \in \partial \D \times \S \mid \s\cdot \n(x) < 0 \} \subset \partial\D \times \S,
\end{equation}
denotes the inflow portion of the corresponding space-direction cylinder. The corresponding outflow boundary portions
$\Gamma_+(\s)$, $\Gamma_+$ are defined analogously.

Given non-negative data $f\colon \D \times \S \to \R_+$, $g\colon \Gamma_- \to \R_+$,
a cross section function $\sigma \colon \D\times \S \to \R_+$, and a collision kernel $K\colon \D \times \S \times \S\to \R_+$,
we want to find a function $u\colon \D \times \S \to \R_+$, satisfying
\begin{equation}
\begin{alignedat}{2}
\s\cdot\nabla u(x,\s) + \sigma(x,\s) u(x,\s)
- \int_\S K(x,\s',\s)u(x,\s')\,\ds' &= f(x,\s),\qquad&& \forall (x,\s) \in \D\times \S, \\
u &= g, \qquad &&\text{on $\Gamma_-$}. 
\end{alignedat}
\label{eq::radTrans}
\end{equation}
In the following, it will be useful to view the angular direction as a parameter and introduce
the abbreviations
\begin{align*}
(\Tr_{\s} u)(x) &\coloneqq \s\cdot\nabla u(x,\s) + \sigma(x,\s) u(x,\s), &
(\cK_{\s}u)(x) &\coloneqq \int_\S K(x,\s',\s)u(x,\s')\,\ds' ,
\end{align*}
for the pure transport and collision operator respectively. Splitting the transport part into
\begin{equation}
\label{Advection}
\Tr_{\s} = \cA_{\s} + \sigma \id, \quad \cA_{\s} v \coloneqq \s\cdot \nabla v,
\end{equation}
\eqref{eq::radTrans} can be written, for homogeneous boundary data $g \equiv 0$,
 as the operator equation
\begin{equation}
\label{opeq}
(\op u)(\cdot,{\s}) \coloneqq \Tr_{\s} u - \cK_{\s} u
= \cA_{\s} u + \sigma u -\cK_{\s} u = f(\cdot,\s).
\end{equation}
There is extensive literature addressing the solvability of \eqref{opeq} depending on the interrelation of the pair $(\sigma,K)$ usually known as the \emph{optical parameters}, see e.\,g.\ \cite{DL1993b,ES2014,Bal2009,Kharroubi1997}. One may roughly distinguish two ends of the
problem scope, namely the case of \emph{dominating scattering} near the diffusive limit (see e.\,g.\ \cite{GK2010}),
and the case of \emph{dominating transport}.
Here we restrict the subsequent considerations to the latter regime that is governed by at least weakly dominating
transport and possibly anisotropic scattering. The precise conditions on corresponding pairs of optical parameters
are discussed in a later section.

Note that when the kernel $K$ vanishes the \emph{pure transport problems}
\begin{equation}
\label{puretransport0}
\Tr_\s u =f,\quad u|_{\Gamma_-(\s)}= g,\,\,\s\in \S,
\end{equation}
may be viewed as a \emph{parametric family} of PDEs giving rise to the corresponding family of \emph{fiber solutions} $u_\s$, $\s\in \S$.
Alternatively---and this is necessary for the full problem \eqref{opeq}---we can view solutions $u(x,\s)$ as functions of the spatial variable $x\in \D\subset \R^d$
and the parametric variable $\s\in\S\subset \R^{d-1}$. It will therefore be important to identify a function space $U$ consisting of functions
over $\D\times \S$ for which \eqref{opeq} is well-posed in a sense to be made precise in Section \ref{ssec:stabvar}.

\subsection{Common Approaches and Main Obstructions}
There are at least two major groups of numerical strategies for approximately solving \eqref{opeq}, namely the \emph{method of moments} and
the \emph{discrete ordinates method} (DOM), see e.\,g.\ \cite{MY2008} and \cite{Kanschat2009,ACP2011,GS2011b,RGK2012} respectively. The method of moments
builds on (low order) polynomial projections in the
parameter domain and can be viewed as a model reduction. It seems to be rather difficult though to quantify the incurred model bias
and develop rigorous error bounds for the deviation of the approximate solution from the exact one. Also, the accuracy of polynomial
expansions suffers severely from low regularity.
DOM hinges on transport solves for sufficiently many direction parameters. These can serve as quadrature nodes for the approximate
application of the integral operator in combination with Jacobi type iterations to approximately solve the very large densely populated
linear systems. However, the convergence of this iteration in the discrete setting typically degrades with increasing dominance of the
scatterer \cite{Kanschat2009}.

The common approach is to \emph{first} discretize the (continuous) problem and then address the two---at first unrelated---issues:
a) how to solve the (fixed) discrete problem efficiently;
b) how to assess the accuracy attained by the solution of the discrete
   problem.

Modern strategies to face the complexity issues posed by a) concern the development of preconditioners or multigrid strategies
or employ
sparse tensor methods based on sparse grid or hyperbolic cross approximations. The former issue is impeded by the the fact
that on a fixed discrete level it is hard to respect \emph{intrinsic problem metrics} which play a central role in the current approach.
Moreover, the distinct lack of sufficiently strong stability notions accounts, in particular, for increasing recent efforts to incorporate additional \emph{structure preserving properties}
into discrete concepts. Simple examples are nonnegativity or mass conservation.

The viability and performance of sparse tensor methods, in turn, requires
  suitable \emph{a priori regularity assumptions} such as the validity of a certain order
of \emph{mixed smoothness}, see e.\,g.\ \cite{JP1983,Asadzadeh1988,Asadzadeh1998,GS2011b},
which are then also invoked to address b).

In general, variational formulations for parametric transport problems like \eqref{puretransport0} or \eqref{opeq}
are far less common than for elliptic problems. For instance \cite{GS2011b} considers least squares formulations minimizing residuals
in $L_2(\D\times\S)$. Corresponding trial spaces require anisotropic regularity of the solution depending explicitly
and sensitively on the transport direction. This may cause stability problems when the solution exibits shear discontinuities.
Alternatively, \cite{ES2012} proposes a mixed Galerkin formulation based on splitting the solution into symmetric and asymmetric parts.
This still fails to tightly relate errors to residuals which is a key pre-requisit for rigorous a posteriori error estimates.

We summarize now some of the intrinsic obstructions to an efficient and accuracy controlled
numerical solution of such problems.
\begin{enumerate}
\item \label{it:highDim}
The solution $u$ of \eqref{opeq} is a function of $2d-1$ variables (or even more in non-stationary cases and realistic models involving
energy levels). Hence, the problem is \emph{high-dimensional} and standard schemes become possibly prohibitively inefficient.
\vspace*{-1.5mm}
\item
\label{it:Kdense}
A nontrivial scattering kernel $K$ would give rise to densely populated
very large system matrices when using standard discretizations
based on localization only.
\vspace*{-1.5mm}
\item \label{it:lowRegularity}
These obstructions are aggravated by the fact that solutions exhibit in general only a low degree of regularity, in particular, when dealing with
highly concentrated and non-smooth boundary data. Standard \emph{a priori}
error estimates involving classical isotropic Sobolev regularity scales, often derived under unrealistic assumptions, are therefore not very useful for controlling accuracy.
\end{enumerate}
The primary objective
of this paper is  to address the above issues and  develop accuracy controlled schemes and corresponding stability notions.
We confine the discussion to \emph{stationary} problems but remark that the concepts carry over to time-dependent problems.
In fact, unsteady problems become conceptually easier as it will become clear later (aside from having to deal with even more variables).

The numerical results in Section \ref{sec:numres} indicate
that the proposed stability concept, closely intertwining the continuous and discrete setting, produces meaningful physical results  without explicitly imposing additional structure preserving measures.
\vspace*{-2mm}

\subsection{Conceptual Roadmap}\label{ssec:1.2}

The approach proposed in this paper is based on the following steps:
\begin{itemize}
\item[\textbf{(I)}] Identify a pair of Hilbert spaces $U$, $V$ over $\D\times \S$ for which \eqref{eq::radTrans} permits a \emph{stable variational formulation} (see
Section \ref{ssec:stabvar} for the precise meaning) where the (infinite-dimensional) trial space is to accommodate the solution of \eqref{eq::radTrans}.
Stability means that this variational formulation identifies the operator $\op$ in \eqref{opeq} as an \emph{isomorphism} from $U$ onto
the dual $V'$ of the (infinite-dimensional) test space $V$.
\item[\textbf{(II)}] Contrive an ``ideal outer iteration''
\begin{equation}
\label{form}
u_{n+1} = u_n + {\cal P}(f - \op u_n),\quad n=0,1,2, \ldots,
\end{equation}
that converges in $U$ to the unique solution $u$ of \eqref{eq::radTrans}.
\item[\textbf{(III)}] Realize each iteration step approximately  within dynamically updated  error tolerances that are
judiciously chosen so as to guarantee  convergence of the perturbed iteration to the exact
(infinite-dimensional) solution $u$ of \eqref{eq::radTrans}.
\end{itemize}
 Steps (I) and (II) require  analytic preparations which the numerical method is based upon while numerical aspects only enter in Step (III). The contributions of this paper culminate  in Theorem \ref{th:terminates}, which we informally state here as follows.
 \\[1.5mm]
 \textbf{Main Contribution:} {\it We contrive and theoretically justify a numerical algorithm that realizes Step (III) of the Roadmap and prove that
 for any target accuracy $\e >0$ it generates an approximate solution $u_\e$ of  \eqref{opeq} that deviates from the
 exact solution in $L_2(\D\times\S)$ by at most $\e$. Since the algorithm progresses from coarse to successively finer accuracy levels
 termination at any stage comes with a current error certificate.}\\[-2mm]

 This program relies on two
 points that guide the subsequent discussions. At \emph{no stage} is there ever formulated beforehand any fixed discrete problem
 but discretizations are formed \emph{adaptively} at each stage of the (perturbed) outer iteration \eqref{form}. For this to work it is crucial
 that the accuracy of a current approximate solution can be rigorously quantified.
 The perhaps closest relative to the above roadmap are adaptive wavelet methods along the lines of \cite{CDD2002}.
 However, these schemes rely essentially on \emph{symmetric variational formulations} of \emph{Galerkin} type
 and \emph{preconditioning} on the infinite dimensional level results from finding a Riesz basis for the energy space.
 In the present context suitable variational formulations turn out to be intrinsically \emph{unsymmetric}. In fact,
 obtaining suitable \emph{a posteriori error bounds},
 will be based on \emph{unsymmetric stable variational formulations} of \emph{Petrov--Galerkin} type for \eqref{eq::radTrans} and corresponding pure transport problems
\eqref{puretransport0}, see also \cite{DHSW2012}.
A central tool is the Banach--Nečas--Babuška Theorem that is briefly recalled in Section \ref{ssec:stabvar}.
\vspace*{-2mm}
\subsection{Layout}
In the remainder of this section we describe the organization and layout of the paper following the steps (I)--(III).
\begin{itemize}
\item[\textbf{ad (I)}]
Since, depending on the optimal parameters, solutions to \eqref{opeq} may exhibit discontinuities we opt to
choose $U\coloneqq L_2(\D\times \S)= L_2(\D)\otimes L_2(\S)$ as \emph{trial space}. For a variational formulation to be stable the
(infinite-dimensional) test space $V$ must then be different from $U$. As shown in Sections \ref{ssec:2.1}--\ref{ssec:varB}, for the regime of problems considered below a proper test space warranting stability is determined by the \emph{graph norm} of the pure transport operator
$\Tr$. Moreover, as a preparation for Step (II), we derive in Section \ref{ssec:contr} bounds for $\|\Tr^{-1}\cK\|_{\cL(U,U)}$
in terms of the optical parameters.
\item[\textbf{ad (II)}] With (I) at hand we identify in Section \ref{sec:strategy} (infinite-dimensional) \emph{preconditioners} $\cP \in \cL(V',U)$ that
warrant convergence of \eqref{form} in $U$ and render Step (III) practically viable.
In particular, we  identify two problem regimes of \emph{dominating transport} and \emph{dominating scattering}, depending on whether
$\Tr^{-1}\cK$ is a contraction in $\cL(U,U)$ or not, see Sections \ref{ssec:ideal} and \ref{ssec:domscat}.
\item[\textbf{ad (III)}] The remainder of the paper is devoted to Step (III). In Section \ref{sec:periter} we identify core routines needed
for the approximate realization of \eqref{form} as well as error tolerances  these routines need to meet in order to guarantee
convergence of the perturbed outer iteration to the exact solution. Again we have to distinguish first the two regimes of dominating
transport or scattering in Sections \ref{ssec:asti} and \ref{ssec:geone}, respectively, in order to formulate then the main algorithm in
Section \ref{ssec:conv} that covers both regimes.
\end{itemize}

We stress that one never has to invert a dense system involving a discretization of the
\emph{global operator} $\cK$. Instead an error-controlled application of $\cK$ is needed.
 While most numerical studies treat either local problems or simple kernels like
constants we make a point on including non-trivial scatterers. In Section \ref{sec:4} we present a scheme
based on \emph{Alpert wavelet representation} of $\cK$ and \emph{low-rank} approximations, see Section \ref{ssec:Alpert}.

As shown in Section \ref{sec:strategy}, the application of the preconditioner $\cP$ in \eqref{form} is ultimately reduced
to the error-controlled approximate inversion of the ``lifted'' pure transport operator $\Tr$ (acting on functions on $\D\times \S$, see \eqref{lifT}), discussed in Section \ref{sec:5}. This makes essential use of recent results from
\cite{BDS2017,DS2019} where rigorous sharp a posteriori error bounds for linear transport equations are derived for
Discontinuous Petrov--Galerkin (DPG) schemes.

\begin{remark} When progressing with the (perturbed) outer iteration, target accuracies   decrease step by step so that one starts
initially with very coarse DPG discretizations. The only linear systems to be solved in the course of such a \emph{nested iteration}
are the symmetric positive definite sparse DPG systems
for the spatial problems which are always kept as small as possible depending on the current target tolerances. The size of systems
that need to be inverted is always significantly smaller than the number of overall generated degrees of freedom.
\end{remark}

Finally, we present in Section \ref{sec:numres} some first numerical experiments as a proof of concept. They demonstrate,
in particular, the crucial role of adaptivity in the transport solver. In fact, the number of degrees of freedom shown in Figure
\ref{fig:transport-solutions} already for two spatial dimensions indicate that realizing the required error tolerances with
uniform spatial grids would be infeasible.

When the specific value of a constant does not matter we frequently employ the notation $a \lesssim b$ to express that $a$ is bounded by a fixed constant multiple of $b$
independent of all parameters $a$ and $b$ may depend on, that are not explicitly mentioned.

\section{Step (I)---Variational formulations and well-posedness}
\label{sec:preliminaries}
\subsection{Stability}\label{ssec:stabvar}
Our approach relies on appropriate \emph{variational formulations} of \eqref{opeq} which allow us to interpret \eqref{opeq}
as an operator equation
\begin{equation}
\label{opgen}
\op u = f,
\end{equation}
where $\op$ is induced by this variational formulation as a linear mapping from an infinite dimensional trial space $U$ to the dual $V'$ of some
(infinite-dimensional) test space $V$ (see Section \ref{ssec:2.1} (I)). Here the spaces $U$, $V$ host functions of both the spatial variables $x$ and the parametric variables $\s$.

Denoting by $\cL(X,Y)$ the space of all bounded linear operators from $X$ to $Y$,
the objective is then to establish well-posedness of \eqref{opgen} which means bounded invertibility of $\op$ or, more precisely,
boundedness of the \emph{condition number}
\begin{equation*}
\kappa_{U,V'}(\op)\coloneqq \Vert \op\Vert_{\cL(U,V')}\Vert \op^{-1}\Vert_{\cL(V',U)}.
\end{equation*}
Specifying the precise \emph{mapping properties} is therefore the central objective of this section.
The choice of the (Hilbert-)spaces $U$, $V$ tells us under which assumptions on the data,
a unique weak solution exists and in which norm the accuracy of approximate solutions is measured.

A well-known tool to be used in this context is the following result by Banach--Nečas--Babuška
which we recall for the convenience of the reader.

\begin{theorem}
\label{Th:BNB}
Assume that $q(\cdot,\cdot)\colon X \times Y\to \R$ is a bilinear form on the Hilbert spaces $X$, $Y$ \textup{(}with norms $\|\cdot\|_X$, $\|\cdot\|_Y$\textup{)}.
The validity of the following properties:
\begin{enumerate}
\item
$q(\cdot,\cdot)$ is continuous, i.\,e., there exists a $\bar C<\infty$ such that
\begin{equation}
\label{cont}
|q(w,z)|\le \bar C\|w\|_X \|z\|_Y,\quad w\in X, \, z\in Y;
\end{equation}
\item
there exists a $\uc >0$ such that
\begin{equation}
\label{infsup}
\inf_{w\in X}\sup_{z\in Y}\frac{q(w,z)}{\|w\|_X \|z\|_Y}\ge \uc ;
\end{equation}
\item \label{it:BNBinjectivity}
for each $z\in Y\setminus\{0\}$ there exists a $w\in X$ such that $q(w,z)\neq 0$;
\end{enumerate}
is equivalent to the solvability of the problem: given $f\in Y'$ find $u\in X$ such that
\begin{equation}
\label{opeqgen}
q(u,v)= \langle v,f\rangle,\quad v\in Y.
\end{equation}
Moreover, one has the stability relation
\begin{equation}
\label{stab}
\|u\|_X \le \uc^{-1}\|f\|_{Y'}.
\end{equation}
\end{theorem}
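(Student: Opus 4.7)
The plan is to recast the variational problem as an operator equation and invoke the closed range theorem together with a duality argument. Concretely, continuity (i) lets me define a bounded linear operator $Q \colon X \to Y'$ by $\langle Qw, z\rangle_{Y',Y} \coloneqq q(w,z)$, with $\|Q\|_{\cL(X,Y')} \le \bar C$. Then \eqref{opeqgen} is literally the equation $Qu = f$ in $Y'$, so solvability for every $f\in Y'$ amounts to surjectivity of $Q$, and uniqueness amounts to injectivity. The stability bound \eqref{stab} is exactly the statement that $Q^{-1}\in \cL(Y',X)$ with operator norm at most $\uc^{-1}$.

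For the implication (i)--(iii) $\Rightarrow$ solvability, I would first observe that the supremum in the inf-sup condition \eqref{infsup} equals $\|Qw\|_{Y'}/\|w\|_X$ (definition of the dual norm), so (ii) is equivalent to the lower bound $\|Qw\|_{Y'}\ge \uc\|w\|_X$ for all $w\in X$. This immediately gives injectivity of $Q$ and, by a standard Cauchy sequence argument, closedness of $\operatorname{range}(Q)\subseteq Y'$. Condition (iii) rewrites as: if $z\in Y$ satisfies $\langle Qw,z\rangle = 0$ for every $w\in X$, then $z=0$. Since $Y$ is a Hilbert space and hence reflexive, the annihilator of $\operatorname{range}(Q)$ inside $Y''\cong Y$ being trivial forces $\operatorname{range}(Q)$ to be dense in $Y'$; combined with closedness from (ii), this yields $\operatorname{range}(Q)=Y'$, i.e.\ surjectivity. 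Then $Q\colon X\to Y'$ is a bijection and $\|u\|_X = \|Q^{-1}f\|_X \le \uc^{-1}\|f\|_{Y'}$ gives \eqref{stab}.

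For the converse, assuming \eqref{opeqgen} is uniquely solvable for every $f\in Y'$, $Q$ is bijective and the open mapping theorem produces $Q^{-1}\in \cL(Y',X)$; its norm bound translates back into the inf-sup inequality (ii) with $\uc = \|Q^{-1}\|^{-1}$, while (i) is just boundedness of $Q$. To recover (iii), I would use surjectivity: if $z_0\in Y$ satisfies $q(w,z_0)=0$ for all $w\in X$, then $\langle \varphi,z_0\rangle = 0$ for every $\varphi\in Y'$ by surjectivity of $Q$, and Hahn--Banach forces $z_0=0$. The main obstacle is the slightly delicate passage via the closed range / annihilator argument, because one must correctly pair $\operatorname{range}(Q)\subset Y'$ against $Y$ (rather than against $Y''$) and invoke reflexivity of the Hilbert space $Y$; once this identification is made, the rest is bookkeeping. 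An alternative, perhaps more elementary route that bypasses abstract duality is to introduce the Riesz isomorphism $R_Y\colon Y\to Y'$, set $T\coloneqq R_Y^{-1}Q\in \cL(X,Y)$, and argue directly that (ii) makes $T$ bounded below while (iii) makes $T^\ast$ injective, so that standard Hilbert space theory yields $\operatorname{range}(T)=Y$.
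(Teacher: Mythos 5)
The paper does not actually prove Theorem~\ref{Th:BNB}; it recalls it as a standard result (the classical Banach--Nečas--Babuška, or Nečas, theorem) ``for the convenience of the reader'' and then merely records the consequence \eqref{Qcond}. So there is no proof in the paper to compare against.

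Your proof is correct and follows the standard operator-theoretic route. You correctly identify that the inf-sup condition \eqref{infsup} is nothing but the lower bound $\|Qw\|_{Y'}\ge \uc\|w\|_X$ (because $\sup_{z}q(w,z)/\|z\|_Y$ is by definition $\|Qw\|_{Y'}$), which yields injectivity and a closed range; that condition~\ref{it:BNBinjectivity} is precisely the triviality of the pre-annihilator of $\operatorname{range}(Q)$ in $Y$, which via reflexivity of $Y$ gives density; and that dense plus closed gives surjectivity, with the stability bound \eqref{stab} coming straight from the same lower bound. The converse via the open mapping theorem and the Hahn--Banach separation argument for recovering condition~\ref{it:BNBinjectivity} is likewise correct, and your remark that \eqref{cont} is really a standing hypothesis (needed already to define $Q\in\cL(X,Y')$) matches how this equivalence is usually meant. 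The alternative you sketch via the Riesz map $R_Y$ and the operator $T=R_Y^{-1}Q\in\cL(X,Y)$ is the more elementary Hilbert-space version and is also a clean way to present it, since then ``$T$ bounded below and $T^*$ injective $\Rightarrow$ $T$ onto'' avoids having to pass through $Y''$ at all.
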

Note that condition \ref{it:BNBinjectivity} can be replaced by a second inf-sup condition \eqref{infsup} with the roles of $X$ and $Y$ interchanged.

Denoting by $\cal Q$ the operator from $X$ to $Y$ induced by $q(\cdot,\cdot)$, the above theorem says in particular that
\begin{equation}
\label{Qcond}
\kappa_{X,Y'}({\cal Q})\le \frac{\bar C}{\uc}.
\end{equation}

\subsection{Variational Formulations of the Pure Transport problem \texorpdfstring{(\ref{puretransport})}{the operator equation}}\label{ssec:2.1}

As indicated under \textbf{ad (I)} in Section \ref{ssec:1.2}, a crucial role is played by
 a suitable weak formulation for the pure transport equation 
\begin{equation}
\label{puretransport}
\s\cdot\nabla u(x,\s) + \sigma(x,\s) u(x,\s)
 = f(x,\s),\quad \mbox{for almost all } \, (x,\s) \in \D\times \S,
\end{equation}
defined on the phase space $\D\times\S$, where, in the following,
\begin{equation}
\label{sigmamax}
\sigma \ge 0, \quad\|\sigma\|_{L_{\infty}(\D\times \S)} < \infty.
\end{equation}
We consider first corresponding \emph{fiber} problems obtained by freezing the transport direction
$\s\in \S$. In favor of possibly low regularity requirements on the solution, we follow \cite{DHSW2012}. Formally applying integration by parts,
yields the variational problem
\begin{equation}
\label{second}
a(u,v;\s)
\coloneqq \int_{\D} u(\sigma(\cdot,\s) v - \s\cdot\nabla v)\,\dx
= -\int_{\partial\D}\n\cdot\s uv \,\dx + \int_\D f v \,\dx,
\end{equation}
for test functions $v$ from a suitable space yet to be determined. In fact,   the left hand side is now well-defined for
$u\in L_2(\D)$ and $v\in H(\s;\D)$,
where
\begin{equation}
\label{HsD}
H(\s;\D)\coloneqq \{v\in L_2(\D) \mid \s\cdot\nabla v\in L_2(\D)\}
\end{equation}
is a Hilbert space endowed with the norm
\begin{equation*}
\|v\|^2_{H(\s;\D)}\coloneqq \|v\|_{L_2(\D)}^2 + \|\s\cdot\nabla v\|^2_{L_2(\D)}.
\end{equation*}
 However, for $u\in L_2(\D)$ the trace on $\partial\D$ is not well-defined.
 Introducing the closed subspaces
\begin{equation}
\label{Hs0}
H_{0,\Gamma_{\pm}(\s)}(\s;\D)\coloneqq \operatorname{clos}_{\|\cdot\|_{H(\s;\D)} }\{ v\in C^1(\bar \D) \mid \restr{v}{\Gamma_{\pm}(\s)} =0\},
\end{equation}
and restricting the test functions to $H_{0,\Gamma_{+}(\s)}(\s;\D)$, the boundary integral on the right hand side of \eqref{second}
extends only over $\Gamma_-(\s)$. Thus, prescribing inflow boundary data $g\in L_2(\Gamma_-(\s), \n\cdot\s)$,
the weighted $L_2$ space on $\Gamma_-(\s)$ with weight $|\n\cdot\s|$, a weak formulation of \eqref{puretransport}
is to seek for
\begin{align}
\label{Us}
U(\s) = U &\coloneqq L_2(\D), & V(\s) &\coloneqq H_{0,\Gamma_+(\s)}(\s;\D),
\end{align}
$u=u(\s)\in U(\s) $ such that
\begin{align}
\label{seconda}
a(u,v;\s)& \coloneqq \int_{\D} u(\sigma(\cdot,\s) v- \s\cdot \nabla v)\,\dx \nonumber\\
&= \int_{\Gamma_-(\s)}\n\cdot\s gv \,\dx + \langle v,f\rangle \eqqcolon \langle v,F\rangle,\quad
v\in V(\s).
\end{align}
Here $\langle v,f\rangle = \langle v,f\rangle_{V,V'}$ stands for the dual pairing between $V$ and $V'$.
In particular, Dirichlet boundary conditions become natural boundary conditions which is an advantage
when the domain of the inflow boundary portion varies with $\s$ because they need not be incorporated in $U$.
In this setting, at least formally, the trial space $U$ is independent of $\s$ while the test space $V =V(\s)$ depends
essentially on $\s$.

The operator $\Tr_\s$ induced by $a(u,v;\s)$ through
\begin{equation}
\label{T2}
(\Tr_{\s} w)(v)= a (w,v;\s),\quad w\in L_2(\D),\, v\in H_{0,\Gamma_+(\s)}(\s;\D),
\end{equation}
defines a bounded linear operator from $L_2(\D)$ to $(H_{0,\Gamma_+(\s)}(\s;\D))'$. Accordingly, we have for
its (exact) adjoint
\begin{equation*}
\Tr_\s^*\in \cL(H_{0,\Gamma_+(\s)}(\s;\D),L_2(\D)),\quad
\langle w, \Tr_\s^* v\rangle = a(w,v;\s),\quad
w\in L_2(\D),\, v\in H_{0,\Gamma_+(\s)}(\s;\D).
\end{equation*}

Before addressing the invertibility of the operator $\Tr_\s$ we consider the ``lifted'' versions viewed as functions of $x$ and $\s$,
see \cite{DHSW2012}.
The role of $H(\s;\D)$ (see \eqref{HsD}) is now played by
 the space
\begin{equation}
\label{H}
H(\D\times \S)\coloneqq \{v\in L_2(\D\times \S) \mid \s\cdot\nabla v \in L_2(\D\times \S)\}.
\end{equation}
\dw{The space $H(\D\times \S)$} becomes a Hilbert space under the norm
\begin{equation}
\label{Hnorm2}
\|v\|_{H(\D\times \S)}^2 \coloneqq \int_{\D\times \S}\big( |\s\cdot\nabla v(x,\s)|^2 + |v(x,\s)|^2\big)\,\dx\,\ds,
\end{equation}
Likewise, the counterparts to the spaces \eqref{Hs0} are given by the closed subspaces
\begin{equation}
\label{Hs}
H_{0,\pm}(\D\times \S)\coloneqq \operatorname{clos}_{\|\cdot\|_{H(\D\times \S)}}\{v\in C^1(\overline{\D\times \S}) \mid v|_{\Gamma_\pm}=0\}.
\end{equation}
The ``lifted'' bilinear form
\begin{equation}
\label{lifta}
a (w,v)\coloneqq \int_\S a (w,v;\s)\,\ds
\end{equation}
allows us to define, in analogy to the above fiber versions, $\Tr$ by
\begin{equation}
\label{lifT}
\langle \Tr w,v\rangle = a (w,v),\quad w\in U ,\,\ v\in V ,
\end{equation}
where
\begin{align}
\label{U}
U &\coloneqq L_2(\D\times \S), & V &\coloneqq H_{0,+}(\D\times \S).
\end{align}
Thus, the variational problem: find $u\in U $ such that for any $f\in V'$
\begin{equation}
\label{varT}
a (u,v) = \langle v,f\rangle,\quad v\in V ,
\end{equation}
is equivalent to the operator equation
\begin{equation}
\label{opT}
\Tr u=f,
\end{equation}
where $\Tr$ is viewed as a mapping from $U $ into $V'$.

The invertibility of the fiber operators $\Tr_\s$ and the lifted version
$\Tr$ will be seen to be an immediate consequence of the following
norm-equivalences, see \eqref{Hnorm2}.

\begin{theorem}
\label{thm:NE}
Under the assumption \eqref{sigmamax} one has
\begin{equation}
\label{NE}
\begin{aligned}
\|\Tr^*_\s v\|_{L_2(\D)} &\sim \|v\|_{H(\s;\D)}, \quad &
v\in V(\s) &= H_{0,\Gamma_+(\s)}(\s;\D),\, \s\in \S;\\[2mm]
\|\Tr^* v\|_{L_2(\D\times \S)} &\sim \|v\|_{H(\D\times \S)}, \quad &
v\in V &= H_{0,+}(\D\times \S).
\end{aligned}
\end{equation}
as well as
\begin{equation}
\label{NE2}
\begin{aligned}
\|\Tr_\s v\|_{L_2(\D)} &\sim \|v\|_{H(\s;\D)}, \quad &
v\in V(\s) &= H_{0,\Gamma_-(\s)}(\s;\D),\, \s\in \S;\\[2mm]
\|\Tr v\|_{L_2(\D\times \S)} &\sim \|v\|_{H(\D\times \S)}, \quad &
v\in V &= H_{0,-}(\D\times \S).
\end{aligned}
\end{equation}
where the constants in the first line are independent of $\s\in \S$ and depend only on $\sigma_{\min},\ \sigma_{\max}$ and $\hat \ell = \diam(\D)$.
\end{theorem}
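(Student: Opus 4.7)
\medskip

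\noindent\textbf{Proof plan.} The upper bounds in \eqref{NE} and \eqref{NE2} are immediate from the explicit form of the operators. For the fiber case, integration by parts in \eqref{T2} identifies the adjoint as the classical first-order operator $\Tr_\s^* v = \sigma v - \s\cdot\nabla v$ acting on $V(\s) = H_{0,\Gamma_+(\s)}(\s;\D)$; the triangle inequality combined with $\|\sigma\|_\infty \le \sigma_{\max}$ gives $\|\Tr_\s^* v\|_{L_2(\D)} \le (1+\sigma_{\max})\|v\|_{H(\s;\D)}$. The analogous upper bound in \eqref{NE2} is the same calculation for the strong operator $\Tr_\s v = \s\cdot\nabla v + \sigma v$.

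For the lower bound in \eqref{NE}, the plan is to use the method of characteristics. Writing $w\coloneqq\Tr_\s^* v$, for any smooth $v\in V(\s)$ I parametrize $\D$ through points $y\in\Gamma_+(\s)$ and the signed travel time $t\in[0,\tau(y)]$ against the flow, with $\tau(y)\le\hat\ell$. Along such a line, $\xi(t)\coloneqq v(y-t\s)$ satisfies the initial value problem $\xi'(t) + \sigma(y-t\s)\xi(t) = w(y-t\s)$, $\xi(0) = 0$. Since $\sigma\ge 0$, the integrating factor is bounded by one, and a Duhamel representation yields $|\xi(t)|\le \int_0^t|w(y-s\s)|\,ds$. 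Squaring, applying Cauchy--Schwarz in $s$ against the bound $t\le\hat\ell$, integrating in $t$ and then in $y$ against the surface measure $|\n\cdot\s|\,d\Gamma$ on $\Gamma_+(\s)$, Fubini (for the change to characteristic coordinates) gives
\begin{equation*}
\|v\|_{L_2(\D)}^2 \;\le\; \hat\ell^{\,2}\,\|w\|_{L_2(\D)}^2 \;=\; \hat\ell^{\,2}\,\|\Tr_\s^* v\|_{L_2(\D)}^2 .
\end{equation*}
From $\s\cdot\nabla v = \sigma v - w$ one then obtains $\|\s\cdot\nabla v\|_{L_2(\D)}\le \sigma_{\max}\|v\|_{L_2(\D)} + \|w\|_{L_2(\D)} \le (\sigma_{\max}\hat\ell+1)\|\Tr_\s^* v\|_{L_2(\D)}$, and summing the two contributions delivers the lower bound in \eqref{NE} with a constant depending only on $\hat\ell$ and $\sigma_{\max}$. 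A density argument extends the estimate from smooth $v$ to all of $V(\s)$.

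The lifted estimate in \eqref{NE} follows by Fubini: the adjoint of the lifted bilinear form \eqref{lifta} is again $\Tr^* v = \sigma v - \s\cdot\nabla v$ acting on $V = H_{0,+}(\D\times\S)$, and both $\|\Tr^* v\|_{L_2(\D\times\S)}^2$ and $\|v\|_{H(\D\times\S)}^2$ are fiberwise integrals over $\s\in\S$; the fiber estimate applies pointwise in $\s$ with a constant independent of $\s$. The second pair \eqref{NE2} is proved by exactly the same argument, except that now $v$ vanishes on $\Gamma_-(\s)$ and the characteristic ODE is integrated \emph{with} the flow starting from the inflow boundary.

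The main obstacle is the lower bound; the key difficulty is not to require any regularity of $\sigma$ beyond $L_\infty$, which precludes integration-by-parts identities of the form $\int\sigma\,\s\cdot\nabla(v^2)$ used in softer energy arguments. The Duhamel representation along characteristics bypasses this because only $\sigma\ge0$ (pointwise) is needed to bound the integrating factor. The remaining technicality is the rigorous change of variables from $x\in\D$ to the pair (inflow/outflow footpoint, travel time), which is licit because $\D$ is convex with piecewise $C^1$ boundary so that $\tau(y)$ is measurable and the Jacobian relating $\dx$ to $|\n\cdot\s|\,d\Gamma\,dt$ is one.
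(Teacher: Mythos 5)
Your proof is correct and uses essentially the same mechanism as the paper: the Duhamel representation along characteristics, the pointwise bound on the integrating factor from $\sigma\ge 0$, Cauchy--Schwarz in the travel time, and the change to characteristic coordinates with weight $|\n\cdot\s|$. The only material difference is that the paper isolates the $L_2\to L_2$ bound in a separate lemma (Lemma~\ref{lemma::Lcontinuous}) and retains the exponential factor to obtain the sharper constant $\min\{\hat\ell,\sqrt{\hat\ell/(2\sigma_{\min})}\}$ (reused later for the contraction estimate \eqref{eq:condContraction}), whereas your argument discards it and settles for the cruder factor $\hat\ell$, which still suffices for the norm equivalences claimed in the theorem.
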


In principle, these results have been already shown in \cite{DHSW2012}.  We return to a proof in the next section in order  to exhibit
the dependence of involved constants from the optical parameters which will be needed for the numerical scheme.

As a consequence of Theorem \ref{thm:NE} we obtain the following results.

 \begin{corollary}
 \label{thm:invert}
 Assume that \eqref{sigmamax} holds.
 Then there exist constants $0< \uc, \bar C < \infty$ such that for $U(\s)$,
 $U$, $V(\s)$, $V$ defined by \eqref{Us}, \eqref{U}, respectively,
 \begin{align}
 \label{eq:transportbounds}
 \|\Tr_\s\|_{\cL(U(\s),V(\s))}, \|\Tr\|_{\cL(U,V)} &\le \bar C,
 &
 \|\Tr^{-1}_\s\|_{\cL(V(\s)',U(\s) )}, \|\Tr^{-1}\|_{\cL(V',U)} &\le \uc^{-1}.
 \end{align}
 Hence, the variational problems \eqref{seconda}, \eqref{varT}, respectively, have unique solutions that depend continuously on the data.
 \end{corollary}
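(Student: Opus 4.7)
The plan is to apply the Banach--Ne\v{c}as--Babu\v{s}ka Theorem~\ref{Th:BNB} to the bilinear forms $a(\cdot,\cdot;\s)$ on $U(\s)\times V(\s)$ and $a(\cdot,\cdot)$ on $U\times V$, deriving its three hypotheses from Theorem~\ref{thm:NE}. Continuity is routine: by Cauchy--Schwarz and \eqref{sigmamax},
\begin{equation*}
|a(u,v;\s)| \le \|u\|_{L_2(\D)}\bigl(\|\sigma\|_{L_\infty}\|v\|_{L_2(\D)} + \|\s\cdot\nabla v\|_{L_2(\D)}\bigr) \le \sqrt{2}\,\max\{\|\sigma\|_{L_\infty},1\}\,\|u\|_{U(\s)}\|v\|_{V(\s)}
\end{equation*}
uniformly in $\s$; integrating over $\s$ transfers the same estimate to the lifted form, fixing $\bar C$.

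For the inf-sup condition I exploit that $a(u,v;\s)=\langle u,\Tr_\s^* v\rangle_{L_2(\D)}$ by definition of the adjoint. The first line of \eqref{NE} supplies $\|\Tr_\s^* v\|_{L_2(\D)}\ge c\,\|v\|_{H(\s;\D)}$ with $c>0$ depending only on $\sigma_{\min},\sigma_{\max},\diam(\D)$, so
\begin{equation*}
\sup_{v\in V(\s)}\frac{a(u,v;\s)}{\|v\|_{V(\s)}} \ge c\,\sup_{v\in V(\s)}\frac{\langle u,\Tr_\s^* v\rangle_{L_2(\D)}}{\|\Tr_\s^* v\|_{L_2(\D)}}.
\end{equation*}
Once the range of $\Tr_\s^*$ is known to be dense in $L_2(\D)$, the right-hand supremum equals $\|u\|_{L_2(\D)}$, producing \eqref{infsup} with $\uc \ge c$. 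The injectivity condition \ref{it:BNBinjectivity} is immediate from the same norm equivalence, since $\Tr_\s^* v=0$ forces $\|v\|_{V(\s)}=0$ and hence $v=0$; picking $u=\Tr_\s^* v$ for $v\neq 0$ even yields $a(u,v;\s)=\|\Tr_\s^* v\|_{L_2(\D)}^2>0$. The lifted form is handled identically using the second line of \eqref{NE}.

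The step requiring the most care, and not packaged in Theorem~\ref{thm:NE}, is the density of $\operatorname{range}(\Tr_\s^*)$ in $L_2(\D)$. I would establish it constructively by solving the adjoint transport problem $-\s\cdot\nabla v+\sigma v=g$ with $v|_{\Gamma_+(\s)}=0$ along characteristics: for $g\in L_2(\D)$ and $\sigma\in L_\infty$ the resulting $v$ lies in $V(\s)$ and satisfies $\Tr_\s^* v=g$, giving in fact surjectivity; the same argument executed pointwise in $\s$ together with Fubini handles the lifted case. With all three BNB hypotheses in place, the stability estimate \eqref{stab} applied to both bilinear forms yields the inverse bounds $\uc^{-1}$ in \eqref{eq:transportbounds} together with existence, uniqueness, and continuous dependence of the solutions to \eqref{seconda} and \eqref{varT} on the data.
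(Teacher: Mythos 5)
Your proposal is correct and follows essentially the same route as the paper: both reduce the corollary to the Banach--Ne\v{c}as--Babu\v{s}ka conditions via the identity $a(w,v)=\langle w,\Tr^*v\rangle$, the norm equivalence of Theorem~\ref{thm:NE}, and surjectivity of $\Tr^*$ (which you obtain constructively along characteristics, exactly in the spirit of the formula \eqref{Tinverse}; the paper instead phrases it as injectivity of $\Tr$ from \eqref{NE2}). The only cosmetic difference is that the paper equips the test space with the induced norm $\tripnorm{v}_{\Tr^*}=\|\Tr^*v\|_{L_2}$, making the inf-sup and continuity constants both equal to $1$, whereas you keep the standard $H(\s;\D)$ norm and absorb the equivalence constants into $\bar C$ and $\uc$ --- both are valid since the corollary only asserts finiteness.
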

\begin{proof}
First note that Theorem \ref{thm:NE} implies that
\begin{align}
\label{trip}
\tripnorm{v}_{\Tr^*_\s} &\coloneqq \|\Tr_\s^* v\|_{L_2(\D)}, &
\tripnorm{v}_{\Tr^* } &\coloneqq \|\Tr ^* v\|_{L_2(\D\times \S)},
\end{align}
are equivalent norms on $V(\s) = H_{0,\Gamma_+(\s)}(\s;\D)$, $V = H_{0,+}(\D\times \S)$, respectively. Endowing
$V(\s)$, $V$ with these norms, observe that
\begin{align}
\label{sup2}
\sup_{w\in U}\frac{a(w,v)}{\|w\|_{U}} &= \sup_{w\in U}\frac{\langle w,\Tr^* v\rangle}{\|w\|_{U}} = \|\Tr^*v\|_{U'}= \|\Tr^*v\|_{U}= \tripnorm{v}_{\Tr^*}.
\end{align}
Since by \eqref{NE2}, $\Tr$ is injective, and hence $\Tr^*$ is surjective, we obtain
 \begin{align}
\label{infsup2}
\sup_{v\in V}\frac{a(w,v)}{\tripnorm{v}_{\Tr^*}}
&= \sup_{v\in V}\frac{\langle \Tr w,v\rangle}{\tripnorm{v}_{\Tr^*}}
= \sup_{v\in V}\frac{\langle w, \Tr^* v\rangle}{\tripnorm{v}_{\Tr^*}}
\ge \frac{\|w\|_{L_2(\D\times\S)}^2}{\|w\|_{L_2(\D\times\S)}}
= \|w\|_{L_2(\D\times\S)} = \|w\|_{U},
\end{align}
which says that $\uc = \bar C =1$ and hence, by Theorem \ref{Th:BNB},
\begin{equation}
\label{condT}
\kappa_{U,V'}(\Tr)=1
\end{equation}
for $U$, $V$ as in \eqref{U}. The treatment of the fiber operators $\Tr_{\s}$ is completely analogous.
Hence,
 with the choice \eqref{trip} of norms \eqref{seconda} and \eqref{varT} are perfectly conditioned,
 i.\,e., the operators $\Tr_\s$, $\Tr$ are even \emph{isometries} between the respective pairs of spaces.
This completes the proof.
\end{proof}

\begin{remark}
\label{rem:s}
Later, both the fact that the fiber operators $\Tr_\s$ as well as the lifted versions $\Tr$ have bounded condition numbers will be
used in the envisaged numerical scheme.
\end{remark}

It will be useful to clearly distinguish the two above variational formulations\\[2mm]
\noindent
\textbf{Variational formulation (F1):} \textit{determined by the
combination of the bilinear form $a(\cdot,\cdot)$ from \eqref{seconda} with the pair of spaces $U$, $V$ it is supposed to
act on, namely
\begin{equation}
\tag{F1}
\label{F1}
\begin{aligned}
a(u,v) &= \int_{\D\times \S} u(x,\s)(\sigma(x,\s) v(x,\s)- \s\cdot \nabla v(x,\s))\,\dx \,\ds, \\[2mm]
U &= L_2(\D\times \S),\quad V = H_{0,+}(\D\times \S),
\end{aligned}
\end{equation}
}
\vspace*{2mm}
\noindent
\textbf{Variational formulation (F2):} \textit{determined by
\begin{equation}
\tag{F2}
\label{F2}
\begin{gathered}
 a(u,v;\s) \coloneqq \int_\D(\s\cdot\nabla u + \sigma(\cdot,\s)u)v \,\dx,\quad a(u,v)\coloneqq \int_{\S} a(u(\cdot,\s), v(\cdot,\s);\s) \,\ds,\\[2mm]
\begin{aligned}
 U(\s) &= H_{0,\Gamma_-(\s)}(\s;\D), \quad & V(\s) &= V =L_2(\D),\\[2mm]
 U &= H_{0,-}(\D\times \S),\quad & V &= L_2(\D\times \S).
\end{aligned}
\end{gathered}
\end{equation}
}
\vspace*{2mm}

Endowing $U = H_{0,-}(\D\times \S)$ with the norm $\tripnorm{w}_{\Tr}\coloneqq \|\Tr w\|_{L_2(\D\times \S)}$,
the same type of argument as in the proof of Theorem \ref{thm:invert} again combined with Theorem \ref{thm:NE} yields the following result,
see also \cite{DHSW2012}.

\begin{prop}
\label{rem:variant}
For data $f \in L_2(\D), L_2(\D\times \S)$, respectively the variational problems
\begin{align}
\label{varvariant}
a(u(\s),v;\s) &= \langle v,f\rangle, \quad v\in V(\s),\ \s\in \S, &
a(u,v) &= \langle v,f\rangle,\quad v\in V,
\end{align}
have unique solutions in $U(\s)$, $U$, defined by \eqref{F2}, respectively, which depend continuously on the data.
\end{prop}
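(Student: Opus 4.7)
The plan is to apply Theorem \ref{Th:BNB} directly, exactly mirroring the argument already used in the proof of Corollary \ref{thm:invert} for formulation (F1), but now with the roles of trial and test spaces swapped. The essential input is the second block of norm equivalences in Theorem \ref{thm:NE}, namely $\|\Tr_\s v\|_{L_2(\D)} \sim \|v\|_{H(\s;\D)}$ on $H_{0,\Gamma_-(\s)}(\s;\D)$ and $\|\Tr v\|_{L_2(\D\times \S)} \sim \|v\|_{H(\D\times \S)}$ on $H_{0,-}(\D\times \S)$, which tell us that $\tripnorm{w}_{\Tr}\coloneqq\|\Tr w\|_{L_2(\D\times\S)}$ is an equivalent norm on $U=H_{0,-}(\D\times\S)$ (and analogously in the fiber case).

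I will treat the lifted version in detail, the fiber case being completely analogous. With $U$ endowed with $\tripnorm{\cdot}_{\Tr}$ and $V=L_2(\D\times\S)$, the bilinear form in (F2) can be written as $a(u,v)=\langle \Tr u,v\rangle_{L_2(\D\times\S)}$. Continuity is then immediate from Cauchy--Schwarz,
\begin{equation*}
|a(u,v)| \le \|\Tr u\|_{L_2(\D\times \S)}\|v\|_{L_2(\D\times \S)} = \tripnorm{u}_{\Tr}\|v\|_V,
\end{equation*}
giving $\bar C = 1$. For the inf-sup condition, testing with $v=\Tr u\in L_2(\D\times\S)$ yields
\begin{equation*}
\sup_{v\in V}\frac{a(u,v)}{\|v\|_V} \ge \frac{\langle \Tr u,\Tr u\rangle_{L_2(\D\times \S)}}{\|\Tr u\|_{L_2(\D\times \S)}} = \tripnorm{u}_{\Tr},
\end{equation*}
so $\uc = 1$.

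The only remaining, and arguably most delicate, point is the injectivity condition (iii) of Theorem \ref{Th:BNB}: if $v\in L_2(\D\times\S)$ satisfies $\langle \Tr u,v\rangle_{L_2(\D\times \S)}=0$ for every $u\in H_{0,-}(\D\times\S)$, then $v=0$. Here I exploit that \eqref{NE2} gives not only a lower bound but, together with the boundedness of $\Tr$, identifies $\Tr$ as a topological isomorphism from $H_{0,-}(\D\times\S)$ onto its image in $L_2(\D\times\S)$; combined with the surjectivity that \eqref{NE2} implies (any $f\in L_2$ is hit by solving the transport equation with zero inflow data via the same argument underlying Theorem \ref{thm:NE}), the range of $\Tr$ equals all of $L_2(\D\times\S)$. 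Consequently $v$ is $L_2$-orthogonal to a dense subset, forcing $v=0$.

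With all three BNB hypotheses verified, Theorem \ref{Th:BNB} delivers unique solvability of \eqref{varvariant} together with the stability estimate $\tripnorm{u}_{\Tr}\le \|f\|_{L_2(\D\times\S)}$, and in fact $\Tr:U\to V'$ is an isometry (condition number $1$) in these norms. The fiber statement is obtained by the same reasoning applied to $\Tr_\s$ on $H_{0,\Gamma_-(\s)}(\s;\D)$ using the corresponding line of \eqref{NE2}, with constants uniform in $\s$ as asserted in Theorem \ref{thm:NE}. I expect the only place where care is required is the surjectivity step used for condition (iii); continuity and inf-sup are essentially free from the graph-norm setup.
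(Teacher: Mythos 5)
Your proof is correct and follows exactly the route the paper indicates: the paper dispenses with Proposition \ref{rem:variant} by noting it is obtained from ``the same type of argument as in the proof of Theorem \ref{thm:invert}'' after endowing $U=H_{0,-}(\D\times\S)$ with $\tripnorm{\cdot}_{\Tr}$, which is precisely what you carry out, with the roles of trial and test space swapped relative to (F1) so that surjectivity of $\Tr$ (rather than $\Tr^*$) is the input to BNB condition~(iii). Your identification of the surjectivity step --- justified by the explicit representation \eqref{Tinverse} showing $\Tr$ maps $H_{0,-}(\D\times\S)$ onto $L_2(\D\times\S)$, combined with closed range from \eqref{NE2} --- as the only genuinely nontrivial point is accurate.
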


\begin{remark}
The solutions in \eqref{varvariant} are required to have more regularity than in the first version \eqref{F1}, requiring, in particular,
that $f\in L_2(\D\times \S)$. Moreover, boundary conditions on $\Gamma_-(\s)$, $\Gamma_-$
are now essential boundary conditions that need to be built into the ansatz. Our interest in the formulation \eqref{F2} is a duality argument to be used later for the variational formulation of the full equation \eqref{opeq}.
\end{remark}

\subsection{Norm Equivalences}\label{ssec:2.2}

We establish next the norm equivalences in Theorem \ref{thm:NE}. \dw{As indicated earlier, a main reason for revisiting the proof  is
to prepare for Section \ref{ssec:contr} by determining the dependence of constants on the optical parameters .}
We use similar arguments as in \cite{ES2014}
(see also \cite{DHSW2012} for related discussions).

Let the time of escape of free moving particles from $\D$ be
\begin{equation}
\label{eq::ellDefinition}
\ell_\pm (x,\s) \coloneqq \inf\{ t>0 \mid x \pm t\s \notin \D\}.
\end{equation}
Then,
\begin{equation}
\ell(x,\s) \coloneqq \ell_-(x,\s) + \ell_+(x,\s)
\end{equation}
is the length of the longest line segment through $x$ in direction $\s$ completely contained in $\D$ and
\begin{equation}
\label{escape}
\hat\ell \coloneqq \sup_{(x,\s)\in \D \times \S} \ell(x,\s) = \diam(\D)
\end{equation}
is the maximum time of escape. For a given $\s\in\S$, we can express any $x\in \D$ in terms of characteristic coordinates as follows. Denoting $x_-(x,\s)\in \Gamma_-(\s)$ the intersection of the line $x+t\s$, $t\in \R$, with $\Gamma_-(\s)$, we can write
\begin{align}
\label{charcord}
x &= x_-(x,\s)+ \ell_-(x,\s) \s.
\end{align}
In these terms, define for $v\in L_2(\D\times \S)$ and almost every
$x= x_-(x,\s) + \ell_-(x,\s)\s\in \D$, $x_-(x,\s)\in \Gamma_-(\s)$
\begin{align}
w(x,\s)
&=
w\left(x_-(x,\s) + \ell_-(x,\s)\s,\s\right) \nonumber\\
&\coloneqq \int_0^{\ell_-(x,\s)} e^{-\int_r^{\ell_-(x,\s)} \sigma\left(x_-(x,\s)+\s\theta,\s\right)\,\mathrm d\theta}v\left(x_-(x,\s)+r\s,\s\right)\,\mathrm dr .\label{Tinverse}
\end{align}
One readily verifies that $w$ as well as $\Tr_{\s}w(\cdot,\s) = v(\cdot,\s)$ belong to $L_2(\D\times \S)$. Moreover
\begin{align}
\label{lowerw}
\|\Tr w\|^2_{L_2(\D\times\S)} &= \int_{\S}\|\Tr_{\s}w(\cdot,\s)\|^2_{L_2(\D)}\,\ds \le
C_1\|w\|^2_{H(\D\times \S)},
\end{align}
where $C_1$ depends on $\sigma_{\max}$,
where we abbreviate
\begin{align*}
\sigma_{\min} &\coloneqq \inf_{(x,\s)\in\D\times \S}\sigma (x,\s), &
\sigma_{\max} &\coloneqq \sup_{(x,\s)\in\D\times \S}\sigma (x,\s).
\end{align*}

We first derive a bound on $\Tr^{-1}$ as an operator mapping $L_2(\D\times \S)$ into itself.
\begin{lemma}
\label{lemma::Lcontinuous}
If $0\le \sigma \in L^\infty (\D \times \S)$, then $\Tr^{-1}$ is a continuous operator from $L^2 (\D \times \S)$ to $L^2 (\D \times \S)$ and
\begin{equation}
\label{contTminus}
\Vert \Tr^{-1}\Vert_{\cL(L^2(\D\times \S), L^2(\D\times \S))} \leq
\sqrt{ \hat{\ell} \dfrac{1-e^{-2\hat\ell \sigma_{\min} } }{ 2\sigma_{\min} } } \le \min\Big\{\hat \ell, \sqrt{\hat\ell/2\sigma_{\min}}\Big\}.
\end{equation}
Defining the formal adjoint of $\Tr$, by $\int_{\D\times\S}(\Tr^*v)w \,\dx\,\ds = \int_{\D\times\S}(-\s\cdot\nabla v+\sigma v)w\,\dx\,\ds$,
the same bound holds for $\Vert \Tr^{-*}\Vert_{\cL(L^2(\D\times \S), L^2(\D\times \S))}$.
\end{lemma}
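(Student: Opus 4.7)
The plan is to use the explicit representation formula \eqref{Tinverse} for $w=\Tr^{-1}v$ along characteristics, and to combine a Cauchy--Schwarz bound on the characteristic integral (which extracts the exponential factor involving $\sigma_{\min}$) with a change of variables to characteristic coordinates on $\D$ for each fixed $\s$.

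Concretely, for fixed $\s\in\S$, I would apply Cauchy--Schwarz to the expression
\[
w(x,\s)=\int_0^{\ell_-(x,\s)} e^{-\int_r^{\ell_-(x,\s)}\sigma(x_-(x,\s)+\theta\s,\s)\,\mathrm d\theta}\,v(x_-(x,\s)+r\s,\s)\,\mathrm dr,
\]
splitting the integrand as the product of $e^{-\int_r^{\ell_-}\sigma(\cdot)\,\mathrm d\theta}$ and $v(x_-+r\s,\s)$. Using $\sigma\ge\sigma_{\min}$ and evaluating the elementary one-dimensional integral $\int_0^{\ell_-}e^{-2\sigma_{\min}(\ell_--r)}\,\mathrm dr=(1-e^{-2\sigma_{\min}\ell_-})/(2\sigma_{\min})$, monotonicity in $\ell_-\le\hat\ell$ yields
\[
|w(x,\s)|^2\le \frac{1-e^{-2\hat\ell\sigma_{\min}}}{2\sigma_{\min}}\int_0^{\ell_-(x,\s)}|v(x_-(x,\s)+r\s,\s)|^2\,\mathrm dr.
\]

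Next, for fixed $\s$, I would change variables on $\D$ via the parametrization $x=x_-+t\s$ with $x_-\in\Gamma_-(\s)$ and $t\in[0,\ell(x_-,\s)]$; the Jacobian is $|\n\cdot\s|$. This gives
\[
\int_\D|w(x,\s)|^2\,\dx=\int_{\Gamma_-(\s)}|\n\cdot\s|\int_0^{\ell(x_-,\s)}|w(x_-+t\s,\s)|^2\,\mathrm dt\,\mathrm dx_-.
\]
Inserting the previous estimate and swapping the order of the resulting double integral via $\int_0^{\ell}\int_0^t|v(x_-+r\s,\s)|^2\,\mathrm dr\,\mathrm dt=\int_0^{\ell}(\ell-r)|v(x_-+r\s,\s)|^2\,\mathrm dr\le \hat\ell\int_0^{\ell}|v(x_-+r\s,\s)|^2\,\mathrm dr$, then reversing the characteristic change of variables yields
\[
\int_\D|w(x,\s)|^2\,\dx\le \hat\ell\,\frac{1-e^{-2\hat\ell\sigma_{\min}}}{2\sigma_{\min}}\int_\D|v(x,\s)|^2\,\dx.
\]
Integration over $\s\in\S$ gives the claimed bound, and the final $\min\{\hat\ell,\sqrt{\hat\ell/(2\sigma_{\min})}\}$ follows from the elementary inequality $(1-e^{-2\hat\ell\sigma_{\min}})/(2\sigma_{\min})\le\min\{\hat\ell,1/(2\sigma_{\min})\}$ (the limit as $\sigma_{\min}\to 0^+$ handles the case $\sigma_{\min}=0$).

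For the formal adjoint $\Tr^{-*}$, I would observe that $\Tr^*$ has the same structure as $\Tr$ but with advection direction $-\s$, so the representation formula \eqref{Tinverse} holds with $\ell_-$ replaced by $\ell_+$ and $\Gamma_-(\s)$ by $\Gamma_+(\s)$; since $\hat\ell=\sup(\ell_++\ell_-)$ and $\sigma$ is unchanged, the identical argument yields the same constant. The main (mild) obstacle is the bookkeeping in the characteristic change of variables and the justification that $v\in L_2(\D\times\S)$ ensures the measurability and integrability required by Fubini; both are standard given the piecewise $C^1$ regularity of $\partial\D$ assumed at the outset.
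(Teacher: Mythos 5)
Your proposal is correct and follows essentially the same approach as the paper: the explicit characteristic representation of $\Tr^{-1}$, Cauchy--Schwarz on the $1$D integral along the ray, the lower bound $\sigma\ge\sigma_{\min}$ to evaluate the exponential integral, and the change to characteristic coordinates with Jacobian $|\n\cdot\s|$ to integrate over $\D$. The only cosmetic difference is at the Fubini step: you keep $\ell_-(x,\s)$ as the inner upper limit, producing the weight $(\ell-r)\le\hat\ell$ over a triangular region, whereas the paper first extends the inner integral to the full chord $[0,\ell(x_-,\s)]$ and then simply integrates $\int_0^{\ell}\mathrm dt\le\hat\ell$ over a rectangle; both yield the same constant.
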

\begin{proof}

For $v \in L_2(\D\times \S)$, we consider $w$ as defined in \eqref{Tinverse}. One readily checks that $w$ satisfies \eqref{varT} for $v=f$.
For $(x,\s) = (x_-+\ell_-(x,\s)\s,\s)$ and $0\leq \ell_-(x,\s) \leq \ell(x_-,\s)$, it follows from \eqref{Tinverse} and the Cauchy-Schwarz inequality
\begin{align*}
\left| w(x,\s) \right|^2
&\leq
\left( \int_0^{\ell(x_-,\s)} e^{-2\int_r^{\ell(x_-,\s)} \sigma(x_-+\theta \s,\s)\,\mathrm d\theta} \,\mathrm dr \right)
\left( \int_0^{\ell(x_-,\s)} v(x_-+r\s,\s)^2 \,\mathrm dr\right).
\end{align*}
Since
\begin{align*}
\int_0^{\ell(x_-,\s)} e^{-2\int_r^{ \ell(x_-,\s) } \sigma(x_-+\theta \s,\s)\,\mathrm d\theta} \,\mathrm dr
& \leq
\int_0^{\ell(x_-,\s)} e^{-2\int_r^{ \ell(x_-,\s) } \sigma_{\min} \,\mathrm d\theta} \,\mathrm dr
=
\dfrac{1-e^{-2\ell(x_-,\s)\sigma_{\min} }}{ 2\sigma_{\min} }\\
&
\leq
\dfrac{1-e^{-2\hat\ell\sigma_{\min} }}{ 2\sigma_{\min} },
\end{align*}
we derive
\begin{equation}
\label{eq::boundLu}
\left| w(x,\s) \right|^2
\leq
\dfrac{1-e^{-2\hat\ell\sigma_{\min} }}{ 2\sigma_{\min} }
\int_0^{\ell(x_-,\s)} |v(x_-+r\s,\s)|^2 \,\mathrm dr.
\end{equation}
Integrating \eqref{eq::boundLu} over $\D\times\S$,
\begin{align*}
\Vert w \Vert_{L_2(\D\times \S)}^2
&= \int_{(x_-,\s) \in \Gamma_-} \int_{t=0}^{\ell(x_-,\s)} \left| w (x_-+t\s,\s) \right|^2 |\s\cdot\n| \,\dt \,\dGamma_- \\
&\leq \dfrac{1-e^{-2\hat\ell\sigma_{\min} }}{ 2\sigma_{\min} } \int_{(x_-,\s) \in \Gamma_-} \int_{t=0}^{\ell(x_-,\s)}
 \int_{r=0}^{\ell(x_-,\s)} |v(x_-+r\s,\s)|^2 \,\mathrm dr |\s\cdot\n| \,\dt \,\dGamma_- \\
&\leq \hat \ell \dfrac{1-e^{-2\hat\ell\sigma_{\min} }}{ 2\sigma_{\min} } \Vert v \Vert^2_{L_2(\D\times \S)},
\end{align*}
where we have used that $\int_{t=0}^{\ell(x_-,\s)} \,\mathrm dt \leq \hat\ell$ for all $(x_-,\s) \in \Gamma_-$ to derive the last bound. This yields the first bound for $\Vert \Tr^{-1} \Vert_{\cL(L^2(\D\times \S), L^2(\D\times \S))}$ given in \eqref{contTminus}. The second bound follows directly from the fact that $\hat \ell (1-e^{-2\hat\ell\sigma_{\min}})/(2\sigma_{\min}) \leq \min \left\{ \hat \ell^2,\hat \ell/(2\sigma_{\min}) \right\}$ since $1-e^{-x}\leq \min\{1,x\}$ for any $x\geq 0$. The argument for $\Tr^*$ is the same.
\end{proof}

\noindent
\begin{proof}[Proof of Theorem \ref{thm:NE}]
The inequality \eqref{contTminus} says that
\begin{equation}
\label{lowerT}
\|v\|_{L_2(\D\times\S)}
\le \min\Big\{\hat \ell, \sqrt{\hat\ell/2\sigma_{\min}}\Big\}
\begin{cases}
\|\Tr  v\|_{L_2(\D\times\S)}, & v\in H_{0,-}(\D\times \S),\\[2mm]
\|\Tr^*  v\|_{L_2(\D\times \S)}, & v\in H_{0, +}(\D\times \S),
\end{cases} .
\end{equation}
Integrating \eqref{eq::boundLu} only over $x\in \D$ leads to analogous statements for the fibers $\Tr_{\s}$, $\Tr^*_{\s}$, namely
\begin{equation}
\label{lowers}
\|v\|_{L_2(\D)} \le \min\Big\{\hat \ell, \sqrt{\hat\ell/2\sigma_{\min}}\Big\}
\begin{cases}
\|\Tr_{\s} v\|_{L_2(\D)}, & v\in H_{0,\Gamma_-(\s)}(\s;\D),\\[2mm]
\|\Tr^*_{\s} v\|_{L_2(\D)}, & v\in H_{0,\Gamma_+(\s)}(\s;\D),
\end{cases} \quad \s\in \S.
\end{equation}
We infer from \eqref{lowerT} that, for instance,
\begin{align*}
\|\Tr_\s v\|_{L_2(\D )}&\le \|\s\cdot\nabla v\|_{L_2(\D )} + \sigma_{\max}\|v\|_{L_2(\D )} \le (1+ \sigma_{\max}^2)^{1/2}\|v\|_{H(\s;\D)}.
\end{align*}
Conversely, one has
\begin{align}
\label{conversely}
\|v\|_{H(\s;\D)}&\le
 \|\s\cdot\nabla v\|_{L_2(\D )} + \|v\|_{L_2(\D )}
\le \|\Tr_\s v\|_{L_2(\D )} + (1+\sigma_{\max})\|v\|_{L_2(\D )}\nonumber\\
&\le \Big(1 + (1+\sigma_{\max}) \min\Big\{\hat \ell, \sqrt{\hat\ell/2\sigma_{\min}}\Big\}\Big)  \|\Tr_\s v\|_{L_2(\D )} .
\end{align}
The remaining assertions of Theorem \ref{thm:NE} are derived analogously.
\end{proof}

\begin{remark}
\label{rem:smaller}
$\Vert \Tr^{-1}\Vert_{\cL(L^2(\D\times \S), L^2(\D\times \S))}$ is small
when either $\diam(\D)$ is small or when $\sigma_{\min}$ is large relative to $\hat\ell$.
\end{remark}
\subsection{Variational Formulation of the Radiative Transfer Problem \texorpdfstring{(\ref{opeq})}{the operator equation} }
\label{ssec:varB}

Throughout this section we let $g\equiv 0$, i.\,e., we treat homogeneous inflow boundary conditions.
Also, we assume that the kernel $K$ satisfies
\begin{equation}
\label{K}
K(x,\s,\s')\ge 0 \quad (x,\s,\s')\in \D\times \S \times \S,\quad K\in L_\infty (\D;L_2(\S\times \S))\subset L_2(\D\times \S\times \S)
\end{equation}
so that we have
\begin{equation}
\label{Kbounded}
\cK, \, \cK^* \in \cL(L_2(\D\times \S),L_2(\D\times \S)).
\end{equation}
Following the same lines as before for the pure transport operator $\Tr$
we can define the operator $\op$ by
\begin{equation}
\label{B}
b (w,v) =\langle \op w,v\rangle
\coloneqq \int_{\S} a (w(\cdot,\s),v(\cdot,\s);\s)\,\ds - k (w,v),
\quad \forall \,w \in U, \, v\in V,
\end{equation}
where $k(w,v) = \langle\cK w,v\rangle$, and the spaces $U$, $V$ are
chosen according to the formulations \eqref{F1}, \eqref{F2}, respectively.

A key property in what follows is \emph{accretivity} of $\op$. In the present context this means that there exists some
positive $\alpha$ such that
\begin{equation}
\label{accretive}
(\op v,v)\ge \alpha \|v\|^2_{L_2(\D\times \S)},\quad v\in H_{0,-}(\D\times \S).
\end{equation}
We postpone for a moment listing conditions on the optical parameters which imply \eqref{accretive} but
present first the central result in this section.

\begin{theorem}
\label{thm:Binverse}
Assume that \eqref{K} and \eqref{accretive} hold.
Then, for either one of the two formulations \eqref{F1}, \eqref{F2} and any $f\in V'$ the problem: find $u\in U$ such that
\begin{equation}
\label{varprobB}
b(u,v) = \langle f,v\rangle,\quad v\in V,
\end{equation}
has a unique solution satisfying
\begin{equation}
\label{opstab}
\|u\|_{U } \lesssim \|f\|_{V'},
\end{equation}
with constants depending only on the optical parameters.

The operator $\op$, defined by \eqref{B} is in either setting a linear norm-isomorphism from
$U $ onto $V'$, i.\,e., has a finite condition $\kappa_{U,V'}(\op) <\infty$.
\end{theorem}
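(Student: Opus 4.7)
The plan is to verify the three hypotheses of the Banach--Nečas--Babuška Theorem~\ref{Th:BNB} for the bilinear form $b(\cdot,\cdot)$ on each of the two space pairs \eqref{F1} and \eqref{F2}. I will spell out the argument for \eqref{F2}, where $U=H_{0,-}(\D\times\S)$ carries the triple-norm $\tripnorm{w}_{\Tr}=\|\Tr w\|_{L_2}$ and $V=L_2(\D\times\S)$; the case \eqref{F1} is a mirror image obtained by interchanging the roles of $\Tr,\cK$ and $\Tr^*,\cK^*$, using additionally that $(\cK v,v)=(v,\cK v)$ by Fubini.

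Continuity and the inf-sup are quick. Writing $b(u,v)=\langle\Tr u,v\rangle_{L_2}-\langle\cK u,v\rangle_{L_2}$ and invoking Theorem~\ref{thm:NE} to bound $\|u\|_{L_2}\lesssim\tripnorm{u}_{\Tr}$ together with \eqref{Kbounded} gives $|b(u,v)|\lesssim\tripnorm{u}_{\Tr}\|v\|_{L_2}$. For the inf-sup, since $\op u\in L_2=V$ for $u\in H_{0,-}$, one may test with $v=\op u$ and obtain $\sup_{v\in V}b(u,v)/\|v\|_V\ge\|\op u\|_{L_2}$. Accretivity \eqref{accretive} yields $\|u\|_{L_2}\le\alpha^{-1}\|\op u\|_{L_2}$, and then $\tripnorm{u}_\Tr=\|\Tr u\|_{L_2}\le\|\op u\|_{L_2}+\|\cK u\|_{L_2}\le(1+\alpha^{-1}\|\cK\|_{\cL(L_2,L_2)})\|\op u\|_{L_2}$, providing the inf-sup constant.

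The third BNB condition is the subtler point. Let $v\in L_2\setminus\{0\}$ satisfy $b(u,v)=0$ for every $u\in H_{0,-}$. Since $\Tr\colon H_{0,-}\to L_2$ is an isomorphism (Corollary~\ref{thm:invert}), substituting $u=\Tr^{-1}w$ for arbitrary $w\in L_2$ rewrites the identity as $v=\Tr^{-*}\cK^* v$; in particular $v$ lies in the range of $\Tr^{-*}$, hence in $H_{0,+}$. Now formally integrating by parts gives
\begin{equation*}
(\op^* v,v)_{L_2}=-\tfrac{1}{2}\int_{\Gamma_-}\s\cdot\n\,v^2\,\dGamma+\int_{\D\times\S}\sigma v^2-(v,\cK v).
\end{equation*}
Because $\s\cdot\n<0$ on $\Gamma_-$, the boundary contribution is nonnegative, and the remaining part coincides with the expression that underlies \eqref{accretive} on $H_{0,-}$ (where the boundary term has the same favorable sign). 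Consequently $0=(\op^* v,v)_{L_2}\ge\alpha\|v\|_{L_2}^2$, forcing $v=0$. Having established (1)--(3) for \eqref{F2}, Theorem~\ref{Th:BNB} delivers \eqref{opstab} with the condition number controlled by $\alpha$, $\sigma_{\min}$, $\sigma_{\max}$, $\hat\ell$, and $\|\cK\|_{\cL(L_2,L_2)}$; the analogous treatment of \eqref{F1} completes the proof.

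The main obstacle I anticipate is exactly this third step: transferring the accretivity \eqref{accretive}, stated only on $H_{0,-}$, to the adjoint side. It requires both the regularity bootstrap $v\in L_2\Rightarrow v\in H_{0,+}$ from the identity $v=\Tr^{-*}\cK^* v$, and a careful check that the boundary integrals have the correct sign on $H_{0,+}$ so that the same quadratic inequality encoded by \eqref{accretive} propagates to $\op^*$.
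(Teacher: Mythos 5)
Your overall route is close to the paper's in spirit (verify BNB, using accretivity to get the inf-sup and injectivity), but you make one structural move the paper does not: the bootstrap $v = \Tr^{-*}\cK^* v \in H_{0,+}$ to upgrade an $L_2$ annihilator of $b(\cdot,v)$ to an element of the adjoint's domain. The paper instead proves Lemma~\ref{lem:BT} (norm equivalences $\|\op w\|_{L_2}\sim\|\Tr w\|_{L_2}$ and $\|\op' w\|_{L_2}\sim\|\Tr' w\|_{L_2}$) and argues surjectivity by contradiction via the denseness of $H_{0,\mp}$ in $L_2$; your bootstrap makes the regularity gain explicit and is arguably cleaner. Your inf-sup argument ($\|u\|_{L_2}\le\alpha^{-1}\|\op u\|_{L_2}$, then the triangle inequality) is also a leaner version of the Young's-inequality manipulation buried in the proof of Lemma~\ref{lem:BT}.

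There is, however, a genuine gap in the last step. You conclude $0=(\op^* v,v)\ge\alpha\|v\|_{L_2}^2$, which is the \emph{accretivity of $\op'$ on $H_{0,+}$}. This is not literally what \eqref{accretive} gives you: \eqref{accretive} is accretivity of $\op$ on $H_{0,-}$, and its boundary term $\tfrac12\int_{\Gamma_+}\s\cdot\n\,w^2\ge 0$ points the ``wrong way'' — knowing that a nonnegative boundary contribution plus a bulk term is $\ge\alpha\|w\|^2$ does not let you peel off the boundary term and keep the bound for the bulk term alone. Saying that the remaining part ``coincides with the expression that underlies \eqref{accretive}'' is therefore not yet an argument. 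The missing step is a localization: test \eqref{accretive} on $w\in C_c^\infty(\D\times\S)$, for which the advective contribution $(\cA w,w)$ vanishes identically, to obtain $((\sigma-\cK)w,w)\ge\alpha\|w\|_{L_2}^2$; by density and $L_2$-boundedness of $\sigma\,\id-\cK$ this extends to all of $L_2$, and since $(\cK v,v)=(\cK^* v,v)$ the same $L_2$-estimate serves $\sigma\,\id-\cK^*$. Only then do you have $(\op' v,v)\ge\alpha\|v\|_{L_2}^2$ for $v\in H_{0,+}$ and your conclusion follows. (The paper is itself a little loose here — Lemma~\ref{lem:BT} is stated under the stronger hypothesis \eqref{ass}, which yields both accretivities symmetrically, while the theorem only assumes \eqref{accretive} — so it is worth being explicit about this transfer step rather than treating it as obvious.) One minor side remark: the identity $(\cK v,v)=(v,\cK v)$ that you attribute to Fubini is simply symmetry of the real $L_2$ inner product; what you actually want, and do use, is $(\cK v,v)=(\cK^* v,v)$.
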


The proof makes use of the following norm equivalences.

\begin{lemma}
\label{lem:BT}
Let $\Tr'$, $\op'$ denote the formal adjoints of $\Tr$, $\op$, respectively.
Then, under the assumptions \eqref{ass}, \eqref{K} on $\sigma$ and $K$ one has
\begin{equation}
\label{BH}
\begin{array}{ll}
\|w\|_{H(\D\times\S)}\sim \|\op w\|_{L_2(\D\times \S)} \sim \|\Tr w\|_{L_2(\D\times \S)}
, & w\in H_{0,-}(\D\times \S),\\[2mm]
\|w\|_{H(\D\times\S)}\sim \|\op' w\|_{L_2(\D\times \S)} \sim \|\Tr' w\|_{L_2(\D\times \S)}
, & w \in H_{0,+}(\D\times \S),
\end{array}
\end{equation}
where
the constants depend on the optical parameters.
\end{lemma}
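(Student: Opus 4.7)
The plan is to decouple the two equivalences in each line. The $\Tr$-equivalences $\|w\|_{H(\D\times\S)}\sim \|\Tr w\|_{L_2(\D\times\S)}$ on $H_{0,-}(\D\times\S)$ and $\|w\|_{H(\D\times\S)}\sim \|\Tr^* w\|_{L_2(\D\times\S)}$ on $H_{0,+}(\D\times\S)$ are already contained in Theorem \ref{thm:NE} (lines \eqref{NE} and \eqref{NE2}), and $\Tr'$ coincides with the formal $L_2$-adjoint $\Tr^*$ used there. It therefore suffices to show $\|\op w\|_{L_2(\D\times\S)}\sim \|\Tr w\|_{L_2(\D\times\S)}$ on $H_{0,-}(\D\times\S)$ and the analogous statement for the adjoints on $H_{0,+}(\D\times\S)$. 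The two building blocks are: (i) $\cK,\cK^*\in \cL(L_2(\D\times\S),L_2(\D\times\S))$, which is exactly \eqref{Kbounded}; and (ii) an $L_2$-bound $\|w\|_{L_2}\lesssim \|\op w\|_{L_2}$, which is where the accretivity hypothesis \eqref{accretive} enters.

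For the upper estimate, write $\op = \Tr - \cK$ and apply the triangle inequality to obtain
\begin{equation*}
\|\op w\|_{L_2(\D\times\S)} \le \|\Tr w\|_{L_2(\D\times\S)} + \|\cK\|_{\cL(L_2,L_2)}\,\|w\|_{L_2(\D\times\S)}.
\end{equation*}
Bounding $\|w\|_{L_2(\D\times\S)}$ by $\|\Tr w\|_{L_2(\D\times\S)}$ via \eqref{lowerT} from Lemma \ref{lemma::Lcontinuous} yields $\|\op w\|_{L_2}\lesssim \|\Tr w\|_{L_2}$ with a constant depending on the optical parameters through $\sigma_{\min}$, $\hat\ell$, and $\|\cK\|$.

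For the lower estimate, the same decomposition gives
\begin{equation*}
\|\Tr w\|_{L_2(\D\times\S)} \le \|\op w\|_{L_2(\D\times\S)} + \|\cK\|_{\cL(L_2,L_2)}\,\|w\|_{L_2(\D\times\S)},
\end{equation*}
and now the crucial input is accretivity: by \eqref{accretive} and Cauchy--Schwarz,
\begin{equation*}
\alpha\|w\|_{L_2(\D\times\S)}^2 \le (\op w,w) \le \|\op w\|_{L_2(\D\times\S)}\|w\|_{L_2(\D\times\S)},
\end{equation*}
so $\|w\|_{L_2}\le \alpha^{-1}\|\op w\|_{L_2}$ and consequently $\|\Tr w\|_{L_2}\le (1+\|\cK\|/\alpha)\|\op w\|_{L_2}$. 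Chaining with the $\Tr$-equivalence from Theorem \ref{thm:NE} yields the first line of \eqref{BH}.

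The second line follows by the mirror argument applied on $H_{0,+}(\D\times\S)$: the transport equivalence is provided by \eqref{NE}, boundedness of $\cK^*$ is \eqref{Kbounded}, and the reverse estimate $\|v\|_{L_2}\lesssim \|\op' v\|_{L_2}$ follows from the analogue of \eqref{accretive} for $\op'$. I expect the main (mild) obstacle to be precisely this adjoint accretivity: verifying $(\op' v,v)\ge \alpha'\|v\|_{L_2}^2$ for $v\in H_{0,+}(\D\times\S)$. Since $\op' = -\cA + \sigma\,\id - \cK^*$, integration by parts of $-\s\cdot\nabla v\cdot v$ produces the non-negative boundary contribution $\tfrac12\int_{\Gamma_-}|\s\cdot\n|v^2\,\dGamma$ (as $v|_{\Gamma_+}=0$), while the $\sigma$ and $\cK^*$ parts are handled by the same estimates on the optical parameters that produce \eqref{accretive}; hence $\op'$ is accretive with a constant of the same type as $\alpha$, and the argument of the first line carries over verbatim.
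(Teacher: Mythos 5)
Your proof is correct and follows the same overall architecture as the paper's: reduce to showing $\|\op w\|_{L_2}\sim\|\Tr w\|_{L_2}$ via the triangle inequality, boundedness of $\cK$, and accretivity, then chain with the $\Tr$-equivalences of Theorem~\ref{thm:NE}. The one place you genuinely diverge is the lower bound $\|\Tr w\|_{L_2}\lesssim\|\op w\|_{L_2}$: you obtain $\|w\|_{L_2}\le\alpha^{-1}\|\op w\|_{L_2}$ directly from accretivity and Cauchy--Schwarz, which immediately closes the estimate with constant $1+\|\cK\|/\alpha$. The paper instead writes $\|w\|_{L_2}\le\alpha^{-1/2}(\op w,w)^{1/2}$, applies Young's inequality to split $(\op w,w)^{1/2}$, and then absorbs a residual $\|\Tr w\|$-term by choosing $\delta$ small. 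Your route is shorter and avoids the absorption step entirely; it is the cleaner argument, and the constant it produces is of the same quality. The other point where you add value is the adjoint case: the paper dismisses it as ``analogous,'' while you explicitly isolate the step that needs checking, namely accretivity of $\op'$ on $H_{0,+}(\D\times\S)$. Your verification is correct: integration by parts of $-(\s\cdot\nabla v,v)$ for $v|_{\Gamma_+}=0$ gives the nonnegative boundary term $\tfrac12\int_{\Gamma_-}|\s\cdot\n|v^2$, and since $(\cK^*v,v)=(\cK v,v)$ with $\bsig$, $\bsig'$ entering assumption~\eqref{ass} symmetrically, the same constant $\alpha$ works for $\op'$ (so you can drop the $\alpha'$ notation).
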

\noindent
\begin{proof}[Proof of Lemma \ref{lem:BT}]
By \eqref{Kbounded},  we have for some constant $C_1$
\begin{equation}
\label{upperT}
\|\op w\|_{L_2(\D\times \S)}\le \|\Tr w\|_{L_2(\D\times \S)} + C_1\|w\|_{L_2(\D\times \S)}
\le (1+ C_1C_2) \|\Tr w\|_{L_2(\D\times \S)} ,
\end{equation}
where we have used \eqref{lowerT} in the last step. Conversely, again by \eqref{Kbounded},  \eqref{accretive},
and using Young's inequality yields
\begin{align*}
\|\Tr w\|_{L_2(\D\times \S)} &\le \|\op w \|_{L_2(\D\times \S)} + \|\cK w\|_{L_2(\D\times \S)}\nonumber\\
&\le \|\op w \|_{L_2(\D\times \S)} +C_1\|w\|_{L_2(\D\times \S)}\nonumber\\
&\le \|\op w \|_{L_2(\D\times \S)} +\frac{C_1}{\sqrt{\alpha}} (\op w,w)^{1/2}\nonumber\\
&\le \|\op w \|_{L_2(\D\times \S)} +\frac{C_1}{\sqrt{\alpha}}\Big(\frac{\|\op w\|_{L_2(\D\times \S)}}{2\delta}+ \delta\|w\|_{L_2(\D\times \S)}\Big)\nonumber\\
&\le \|\op w \|_{L_2(\D\times \S)} +\frac{C_1}{\sqrt{\alpha}}\Big(\frac{\|\op w\|_{L_2(\D\times \S)}}{2\delta}+ \delta C_2\|\Tr w\|_{L_2(\D\times \S)}\Big).
\end{align*}
where $C_2= \min\Big\{\hat \ell, \sqrt{\hat \ell/\sigma_{\min}}\Big\}$ is the constant from \eqref{lowerT}. Choosing $\delta$ small enough
to ensure that $C_1C_2\delta/\alpha <1$, the relation $\|\op w\|_{L_2(\D\times \S)} \sim \|\Tr w\|_{L_2(\D\times \S)}$ follows.
The first line in \eqref{BH} follows then from Theorem \ref{thm:NE} proving
the assertion   for $\op$. The argument for $\op'$ is analogous.
\end{proof}

We are now in position of proving Theorem \ref{thm:Binverse}.

\begin{proof}[Proof of Theorem \ref{thm:Binverse}]
First, under the given assumptions we clearly have for either formulation \eqref{F1} or \eqref{F2}
with respective pairs $U,\,V$, that $\op$ is bounded
\begin{equation*}
\op \in \cL(U,V').
\end{equation*}
Then, it  follows from Theorem \ref{Th:BNB} and \eqref{accretive} that under the above assumptions
\begin{equation}
\label{L2B}
\|\op^{-1}\|_{\cL(L_2(\D\times\S),L_2(\D\times\S))} \le \alpha^{-1}.
\end{equation}
 To prove the last statement of the theorem note that
in view of \eqref{BH}, injectivity of $\Tr$ and $\Tr'$ implies
injectivity of $\op$ and $\op'$. Suppose $\op$ were not surjective.
Then there exists a $w_0 \neq 0$ in $L_2(\Omega)$ such that
$\langle\op w, w_0\rangle = 0$ for all $w\in H_{0,-}(\Omega)$.
By boundedness of $\op$ and denseness of $H_{0,-}(\Omega)$ in
$L_2(\Omega)$, this leads to a contradiction to \eqref{accretive}.
We can argue in the same way for $\op'$ to conclude that $\op$ and $\op'$
are bijections for their respective pairs of spaces.
This holds by duality, since $(\op')^*$ agrees with $\op$ as a mapping from
$L_2(\Omega)$ to $(H_{0,-}(\Omega))'$. In view of Lemma~\ref{lem:BT},
the proof of Theorem~\ref{thm:Binverse} can now be completed with the
aid of Theorem~\ref{Th:BNB} in exactly the same way as the proof of
Theorem~\ref{thm:invert}.
\end{proof}

When the specific choice of the settings \eqref{F1} or \eqref{F2} is clear from the context, we view \eqref{varprobB}
as an operator equation
\begin{equation*}
\op u = f
\end{equation*}
with data $f$ in the respective dual space $V'$.

We discuss next two general conditions on the optical parameters that entail \eqref{accretive}.
Defining the kernel averages
\begin{equation}
\label{eq::scatteringCrossSections}
\bsig (x,\s) \coloneqq \int_\S K(x,\s,\s') \,\ds'
,\quad \text{and} \quad
\bsig'(x,\s) \coloneqq \int_\S K(x,\s',\s) \,\ds',
\end{equation}
a first frequently studied general  class of optical parameters is signified by the fact that there exist $0< \alpha, M_a<\infty$ such that for all
$(x,\s)\in \D\times \S$,
\begin{equation}
\label{ass}
\sigma(x,\s) - \bsig(x,\s) \ge \alpha,
\quad \sigma(x,\s) - \bsig'(x,\s) \ge \alpha,
\quad \bsig(x,\s) \le M_a,
\quad \bsig'(x,\s) \le M'_a.
\end{equation}
Note that this implies that the absorption coefficient $\sigma$ is not  allowed to vanish in $\D$.
For this class we recall the following well-known result (see e.\,g.\ \cite[Chapter XXI, §2, Theorem 4]{DL1993b}).

\begin{prop}
\label{prop1}
 {
If $\sigma$ and $K$ satisfy assumptions \eqref{K} and \eqref{ass}, then the operator $\op$ is \emph{accretive}, i.\,e., for any $v\in H_{0,-}(\D\times \S)$,
\begin{equation*}
(\op v,v)\ge \alpha \|v\|^2_{L_2(\D\times \S)},
\end{equation*}
where the constant $\alpha$ is the one appearing in \eqref{ass}.
}
\end{prop}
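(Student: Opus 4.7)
The plan is to split the bilinear form into its advection, absorption, and scattering contributions,
\begin{equation*}
(\op v, v) = \int_{\D\times\S}(\s\cdot\nabla v)\,v\,\dx\,\ds + \int_{\D\times\S}\sigma |v|^2\,\dx\,\ds - (\cK v, v),
\end{equation*}
and treat each piece separately. The advection piece should be nonnegative thanks to the inflow boundary condition, the absorption piece yields $\int\sigma|v|^2$, and the scattering piece will be bounded above by a symmetric average of $\bsig$ and $\bsig'$, which assumption \eqref{ass} then lets me absorb into the absorption contribution with a residual $\alpha\|v\|^2$ left over.

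For the advection term, I would first restrict to $v \in C^1(\overline{\D\times\S})$ with $v|_{\Gamma_-}=0$, a dense subspace of $H_{0,-}(\D\times\S)$ by the definition \eqref{Hs}. For such $v$, integration by parts in $x$ at fixed $\s$ gives
\begin{equation*}
\int_\D (\s\cdot\nabla v)\,v \,\dx = \tfrac12\int_{\partial\D} (\s\cdot\n)\,v^2 \,\dGamma,
\end{equation*}
and since $v$ vanishes on $\Gamma_-(\s)$, the right-hand side collapses to an integral over $\Gamma_+(\s)$ with nonnegative weight $\s\cdot\n$. Integrating over $\s\in\S$ shows the advection contribution is nonnegative, and a continuity-in-$H(\D\times\S)$ argument extends the bound to all $v\in H_{0,-}(\D\times\S)$.

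For the scattering term, I would apply the symmetric Young bound $|v(x,\s')v(x,\s)| \le \tfrac12\bigl(|v(x,\s')|^2 + |v(x,\s)|^2\bigr)$ pointwise inside
\begin{equation*}
(\cK v, v) = \int_\D \int_\S\int_\S K(x,\s',\s)\, v(x,\s')\,v(x,\s) \,\ds'\,\ds\,\dx,
\end{equation*}
and swap the order of integration in each resulting term. By the definitions in \eqref{eq::scatteringCrossSections}, one inner integral reproduces $\bsig'(x,\s)$ while the other, after relabeling the dummy variable, reproduces $\bsig(x,\s)$, giving
\begin{equation*}
(\cK v, v) \le \tfrac12 \int_{\D\times\S} \bigl(\bsig(x,\s) + \bsig'(x,\s)\bigr) |v(x,\s)|^2 \,\dx\,\ds.
\end{equation*}

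Combining the three pieces yields
\begin{equation*}
(\op v, v) \ge \tfrac12 \int_{\D\times\S}\bigl[(\sigma - \bsig)+(\sigma-\bsig')\bigr] |v|^2 \,\dx\,\ds \ge \alpha\,\|v\|^2_{L_2(\D\times\S)},
\end{equation*}
where the final step averages the two lower bounds in \eqref{ass}. I do not anticipate a genuine obstacle here; the one point needing modest care is the density approximation in the advection step to justify the boundary integration by parts for general $v \in H_{0,-}(\D\times\S)$, which is standard once \eqref{Hs} is in place.
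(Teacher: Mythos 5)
Your proof is correct and follows essentially the same route as the paper: the paper's sketch also splits $(\op v,v)$ into the accretive advection part $(\cA v,v)\ge 0$ and the zeroth-order part $(\sigma v-\cK v,v)\ge\alpha\|v\|^2$ derived from \eqref{ass} and \eqref{K}. You have simply filled in the Young-inequality/Fubini computation that the paper leaves implicit when it says this ``follows from conditions \eqref{ass}, \eqref{K}.''
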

For the convenience of the reader we sketch the simple argument.
 It follows from conditions \eqref{ass}, \eqref{K} that
$(\sigma v- \cK v,v) \ge \alpha \|v\|_{L_2(\D\times \S)}^2$
 {on $L_2(\D\times\S)$},
which, combined with the accretivity of $\cA$ on $H_{0,-}(\D\times\S)$ , defined by $\langle\cA w,v\rangle = \int_{\D\times \S} \s\cdot \nabla w(x,\s)v(x,\s)\,\dx\,\ds$,  i.\,e., $(\cA v,v) \ge 0$  for all $v \in H_{0,-}(\D\times\S)$,
yields the conclusion.\\

We emphasize that condition \eqref{ass} is not necessary for \eqref{accretive} to hold as can be seen from the
following class of frequently used  kernels with slightly more specified structure.
Consider
\begin{equation}
\label{ker1}
K(x,\s,\s') = \kappa (x)\kk (\s,\s'), \quad \kk (\s,\s')=\kk(\s',\s),\quad \kk(\s,\s')\ge 0,\quad \s,\s'\in\S,\quad \kappa \ge \kappa_0>0,
\end{equation}
with the normalization
\begin{equation}
\label{normaliz}
\int_{\S }\kk(\s,\s')\,\ds'  = \int_{\S }\kk(\s,\s')\,\ds
= 1, \quad \s,\, \s'\in \S.
\end{equation}
Once the integral over one argument is a constant, this latter relation can always be realized by rescaling $\kappa$.
Assuming always that $\ds$ is the Haar measure, it also follows that $\int_{\S\times\S}\kk(\s,\s')\,\ds\,\ds' = 1$.
Moreover, we split
\begin{equation}
\label{split}
\sigma = \sigma_a + \kappa,
\end{equation}
where $\sigma_a\ge 0$ is the so-called absorption coefficient. Hence in this case  $\sigma(x,\s)- \bar\sigma(x,\s)=\sigma(x,\s)-\bar\sigma'(x,\s)= \sigma_a(x,\s)$ so that \eqref{ass} does not hold whenever $\sigma_a$ vanishes somewhere in $\D$.
On the other hand, let $\cC_+\subset L_2(\D\times \S)$ the cone of non-negative functions in $L_2(\D\times\S)$ (in the weak sense) and define
\begin{equation*}
\cK_0 v \coloneqq \int_\S \kk(\cdot,\s')v(\s')\,\ds'.
\end{equation*}
Under the above conditions the largest eigenvalue of $\cK_0$ is one, it is simple and has the constant as the corresponding eigenfunction.
Therefore,
\begin{equation*}
\sup\,\bigl\{(v,\cK_0 v) \bigm\vert v\in \cC_+ \cap H_{0,-}(\D\times\S),\,\|v\|_{L_2(\D\times \S)}=1\bigr\} \eqqcolon \beta <1.
\end{equation*}
Thus, the accretivity condition \eqref{accretive} holds with
\begin{equation*}
\alpha \ge (\sigma_a)_{\min}+ \kappa_0 (1-\beta).
\end{equation*}
 which is strictly larger than zero even if the absorption coefficient vanishes in $\D$.

\vspace*{2mm}
In principle, one could base a numerical method on both formulations \eqref{F1}, \eqref{F2}, where the latter one would
seek approximations in a stronger norm. However, in what follows we focus on the setting \eqref{F1} where the solution is sought in $U=L_2(\D\times\S)$
and where boundary conditions are natural ones.

\dw{
\begin{remark}
\label{rem:Neuman}
There is of course an alternate way of establishing bounded invertibility of $\op\in \cL(U,V')$ whenever the condition
\begin{equation}
\label{contr2}
\|\Tr^{-1}\cK\|_{\cL(U,U)}\le \rho <1
\end{equation}
holds. While continuity of $\op$ is immediate, a straightforward Neuman-series argument shows that then
\begin{equation*}
\|\op^{-1}\|_{\cL(V',U)} \le (1-\rho)^{-1} \|\Tr^{-1}\|_{\cL(V',U)}.
\end{equation*}
We refer to the regime of problems where \eqref{contr2} is valid as the \emph{weakly transport dominated case}.
\end{remark}
In addition condition \eqref{contr2} will be seen to be  crucial for the identification of preconditioners $\cP$ in the idealized iteration \eqref{form}.
We therefore address the derivation of bounds for $\|\op^{-1}\|_{\cL(V',U)}$ in the next section.}

\subsection{Contractivity of \texorpdfstring{$\Tr^{-1}\cK$}
            {transport solver after kernel application}}\label{ssec:contr}

We begin with the following result taken from \cite[Chapter XXI, §2, Lemma 1]{DL1993b}.
\begin{prop}
\label{prop:K}
Assume that \eqref{K} and \eqref{ass} hold. Then
$\cK$ maps $L_2\coloneqq L_2(\D\times \S)$ boundedly into itself, with
\begin{equation}
\label{Kbound}
\|\cK\|_{\cL(L_2,L_2)}\le \big(M_a M_a')^{1/2},
\end{equation}
where $M_a$, $M_a'$ are the constants from \eqref{ass}. Moreover, $\cK$ maps $L_2^+(\D\times\S)$, the cone of non-negative functions in $L_2(\D\times\S)$, into itself.
\end{prop}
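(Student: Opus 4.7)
The plan is a standard Schur test argument, exploiting the fact that both row and column $L_1$-norms of the integral kernel $K(x,\cdot,\cdot)$ are uniformly bounded in $x$ by $M_a$ and $M_a'$, respectively.

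First, I would fix $(x,\s)\in \D\times \S$ and write
\begin{equation*}
(\cK u)(x,\s) = \int_\S K(x,\s',\s)^{1/2}\bigl(K(x,\s',\s)^{1/2} u(x,\s')\bigr)\,\ds',
\end{equation*}
which is licit because $K\ge 0$ by \eqref{K}. Applying Cauchy--Schwarz in $\s'$ and using the definition of $\bsig'$ in \eqref{eq::scatteringCrossSections} together with the bound $\bsig'(x,\s)\le M_a'$ from \eqref{ass}, I obtain pointwise
\begin{equation*}
|(\cK u)(x,\s)|^2 \le \bsig'(x,\s)\int_\S K(x,\s',\s)|u(x,\s')|^2\,\ds' \le M_a'\int_\S K(x,\s',\s)|u(x,\s')|^2\,\ds'.
\end{equation*}

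Next, I would integrate this pointwise bound over $\s\in\S$ and interchange the order of integration via Fubini (justified since $K\ge 0$), then use $\int_\S K(x,\s',\s)\,\ds = \bsig(x,\s')\le M_a$ from \eqref{eq::scatteringCrossSections} and \eqref{ass}. This yields
\begin{equation*}
\int_\S |(\cK u)(x,\s)|^2\,\ds \le M_a'\int_\S |u(x,\s')|^2\bsig(x,\s')\,\ds' \le M_a M_a' \int_\S |u(x,\s')|^2\,\ds'.
\end{equation*}
A final integration over $x\in \D$ gives $\|\cK u\|_{L_2(\D\times\S)}^2 \le M_a M_a'\|u\|_{L_2(\D\times\S)}^2$, proving \eqref{Kbound}.

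For the positivity statement, since $K\ge 0$ almost everywhere by \eqref{K}, the integrand in the definition of $(\cK u)(x,\s)$ is non-negative whenever $u\ge 0$ a.e., so $\cK u\ge 0$ a.e.\ and $\cK$ maps the cone $L_2^+(\D\times\S)$ into itself. There is no real obstacle here: the only delicate point is making sure that the splitting of the kernel into $K^{1/2}\cdot K^{1/2}$ is legitimate, but this is immediate from non-negativity of $K$, and the integrability conditions needed for Fubini follow from \eqref{K}.
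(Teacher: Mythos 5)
Your Schur-test argument is correct, and it is the standard proof of this fact. Note that the paper does not actually supply its own proof here: it states the result as ``taken from \cite[Chapter XXI, §2, Lemma 1]{DL1993b}'' (Dautray--Lions) and leaves it at that, and the Dautray--Lions proof is precisely the Cauchy--Schwarz/Fubini argument you wrote out, using $\int_\S K(x,\s',\s)\,\ds' = \bsig'(x,\s)\le M_a'$ in the pointwise step and $\int_\S K(x,\s',\s)\,\ds = \bsig(x,\s')\le M_a$ after swapping the order of integration. The positivity claim is likewise handled correctly by the sign of $K$. So there is no gap and no genuine divergence from the intended argument; you have simply made explicit what the paper delegates to the reference.
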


\dw{To specify bounds for the operator norm $\|\Tr^{-1}\cK\|_{\cL(U,U)}$ we introduce the quantities}
\begin{align}
\label{gamma}
\gamma &\coloneqq \sup_{(x,\s)\in\D\times \S}\Big\{ \frac{\bar\sigma(x,\s)}{\sigma(x,\s)}, \frac{\bar\sigma'(x,\s)}{\sigma(x,\s)}\Big\},
&
\zeta &\coloneqq \frac{\gamma\sigma_{\max}}{\sigma_{\min}}.
\end{align}

\begin{lemma}
\label{la:normT-1K}
Under assumptions \eqref{ass} on the optical parameters,
\begin{equation}
\label{TK1}
\|\Tr^{-1}\cK\|_{\cL(U,U)}
\le
\min \left\{ \zeta, (\sigma_{\max}-\alpha)/\sigma_{\min}, (M_aM_a')^{1/2} \min\left\{\hat \ell, \sqrt{\hat\ell/2\sigma_{\min}}\right\} \right\}.
\end{equation}
\end{lemma}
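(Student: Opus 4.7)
The plan is to prove the three bounds in the minimum separately, then take the minimum. The underlying strategy is the composition estimate
\[
\|\Tr^{-1}\cK\|_{\cL(U,U)} \le \|\Tr^{-1}\|_{\cL(U,U)}\,\|\cK\|_{\cL(U,U)},
\]
and the game is to play off different factor estimates against each other.

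The third bound is immediate: combine Proposition \ref{prop:K}, which under \eqref{ass} gives $\|\cK\|_{\cL(U,U)}\le (M_aM_a')^{1/2}$, with Lemma \ref{lemma::Lcontinuous}, which gives $\|\Tr^{-1}\|_{\cL(U,U)}\le \min\{\hat\ell,\sqrt{\hat\ell/2\sigma_{\min}}\}$. No further work is needed.

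For the first two bounds I would first upgrade the estimate on $\|\Tr^{-1}\|_{\cL(U,U)}$ under \eqref{ass}. Note that \eqref{ass} forces $\sigma\ge\bar\sigma+\alpha\ge\alpha>0$, so $\sigma_{\min}>0$. Given $v\in L_2(\D\times\S)$, set $w=\Tr^{-1}v\in H_{0,-}(\D\times \S)$, so that $\s\cdot\nabla w+\sigma w=v$ with $w=0$ on $\Gamma_-$. Testing against $w$ and integrating over $\D\times\S$, the transport term produces $\tfrac{1}{2}\int_{\Gamma_+}(\s\cdot\n)w^2\ge 0$, whence
\[
\sigma_{\min}\|w\|_{L_2(\D\times\S)}^2 \le \int\sigma w^2 \le (v,w) \le \|v\|_{L_2}\|w\|_{L_2},
\]
giving the sharpened estimate $\|\Tr^{-1}\|_{\cL(U,U)}\le 1/\sigma_{\min}$.

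With this in hand, the first two bounds reduce to sharper estimates of $\|\cK\|_{\cL(U,U)}$ via the same Schur-type argument used for Proposition \ref{prop:K}. For the bound $(\sigma_{\max}-\alpha)/\sigma_{\min}$, use that \eqref{ass} gives $\bar\sigma,\bar\sigma'\le \sigma-\alpha\le \sigma_{\max}-\alpha$ pointwise, so that $\|\cK\|_{\cL(U,U)}\le\bigl(\|\bar\sigma\|_\infty\|\bar\sigma'\|_\infty\bigr)^{1/2}\le\sigma_{\max}-\alpha$. For the bound $\zeta=\gamma\sigma_{\max}/\sigma_{\min}$, use the definition of $\gamma$ in \eqref{gamma} to conclude $\bar\sigma,\bar\sigma'\le\gamma\sigma\le\gamma\sigma_{\max}$, so that $\|\cK\|_{\cL(U,U)}\le\gamma\sigma_{\max}$. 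Multiplying each by $1/\sigma_{\min}$ yields the two remaining bounds.

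No step is really hard; the only thing worth flagging is that the estimate $\|\Tr^{-1}\|_{\cL(U,U)}\le 1/\sigma_{\min}$ needed for the first two bounds is tighter than (and not subsumed by) the one in Lemma \ref{lemma::Lcontinuous}, so one does have to run the short energy argument above rather than simply cite Lemma \ref{lemma::Lcontinuous}. Conversely, retaining the full $\min\{\hat\ell,\sqrt{\hat\ell/2\sigma_{\min}}\}$ from Lemma \ref{lemma::Lcontinuous} in the third bound is what lets that bound stay finite in the transport-dominated regime $\sigma_{\min}\to 0$, where $1/\sigma_{\min}$ blows up.
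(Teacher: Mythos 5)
Your proof is correct, and it uses the same ingredients as the paper's, just packaged in a more modular way. You factor $\|\Tr^{-1}\cK\|\le\|\Tr^{-1}\|\,\|\cK\|$ and prove the two halves separately: an energy estimate $\|\Tr^{-1}\|_{\cL(U,U)}\le 1/\sigma_{\min}$, and a Schur-test bound $\|\cK\|_{\cL(U,U)}\le\min\{\sigma_{\max}-\alpha,\,\gamma\sigma_{\max}\}$. The paper avoids the explicit factorization and works directly with $w=\Tr^{-1}\cK\varphi$: testing $\Tr w = \cK\varphi$ against $w$ yields $(\cK\varphi,w)=(\cA w,w)+(\sigma w,w)\ge\sigma_{\min}\|w\|^2$, and then the Cauchy--Schwarz estimate is applied directly to $(\cK\varphi,w)$ with the kernel split as $K=K^{1/2}K^{1/2}$, giving $(\cK\varphi,w)\le(\int|w|^2\bar\sigma')^{1/2}(\int|\varphi|^2\bar\sigma)^{1/2}\le\min\{\sigma_{\max}-\alpha,\gamma\sigma_{\max}\}\|w\|\|\varphi\|$; dividing out $\|w\|$ gives the same bound. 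In substance the two arguments are identical (same use of accretivity of $\cA$, same kernel-weighted Cauchy--Schwarz, same pointwise bounds $\bar\sigma,\bar\sigma'\le\min\{\sigma_{\max}-\alpha,\gamma\sigma_{\max}\}$); your factorization is arguably cleaner exposition, while the paper's is one Cauchy--Schwarz shorter since it never forms $\|\cK\varphi\|_{L_2}$ as an intermediate. One small inaccuracy in your commentary: the bound $1/\sigma_{\min}$ is \emph{not} uniformly tighter than the one in Lemma~\ref{lemma::Lcontinuous}; the two are complementary (for small $\hat\ell\sigma_{\min}$ the Lemma~\ref{lemma::Lcontinuous} bound is much better, for large $\hat\ell\sigma_{\min}$ yours is). This does not affect correctness since you derive $1/\sigma_{\min}$ independently, but it is worth stating the relationship accurately.
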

\begin{proof}
Combining \eqref{Kbound} and \eqref{contTminus} yields that
\begin{equation*}
\|\Tr^{-1}\cK\|_{\cL(U,U)}
\leq (M_aM_a')^{1/2} \min\Big\{\hat \ell, \sqrt{\hat\ell/2\sigma_{\min}}\Big\}.
\end{equation*}
To prove that $\|\Tr^{-1}\cK\|_{\cL(U,U)} \leq \min\{\zeta,(\sigma_{\max}-\alpha)/\sigma_{\min}\}$, we proceed as follows. For any $\varphi\in L_2(\D\times\S)$ we have $\cK\varphi\in L_2(\D\times\S)$ so that there exists a unique $w\in H_{0,-}(\D\times\S)$ such that
$\Tr w = \cK\varphi$. Thus, it suffices to prove that $\|w\|_{L_2(\D\times\S)} \le \min\{\zeta,(\sigma_{\max}-\alpha)/\sigma_{\min}\} \|\varphi\|_{L_2(\D\times \S)}$.
Since $\cA$ is accretive on $H_{0,-}(\D\times\S)$, we have
\begin{equation}
\label{Klower}
(\cK \varphi,w) = (\cA w,w) +(\sigma w, w) \geq (\sigma w,w) \geq \sigma_{\min} \Vert w \Vert^2_{L^2(\D\times \S)}.
\end{equation}
Furthermore,
\begin{align}
(\cK \varphi,w)
&\leq \int_{\D\times \S \times \S} \vert w(x,\s) \vert K(x,\s',\s) \vert \varphi(x,\s') \vert \,\dx\,\ds\,\ds' \nonumber\\
&\leq \left( \int_{\D\times \S} \vert w(x,\s) \vert^2 \bar\sigma'(x,\s) \,\dx\,\ds\right)^{1/2} \left( \int_{\D\times \S} \vert \varphi (x,\s') \vert^2 \bar\sigma(x,\s') \,\dx\,\ds' \right)^{1/2} \nonumber\\
&\leq \min \{ \sigma_{\max}-\alpha,\gamma \sigma_{\max}\} \Vert w \Vert_{L^2(\D\times \S)} \Vert \varphi \Vert_{L^2(\D\times \S)} \label{eq::contractionIneq2}
\end{align}
where we have used Cauchy--Schwarz' inequality. Combining this with \eqref{Klower}
 yields the desired inequality $\Vert w \Vert_{L_2(\D\times \S)} \leq \min\{\zeta,(\sigma_{\max}-\alpha)/\sigma_{\min}\} \Vert \varphi \Vert_{L_2(\D\times \S)}$.
\end{proof}
It follows from \eqref{TK1} that having
\begin{equation}
\label{eq:condContraction}
\min \left\{ \zeta, (\sigma_{\max}-\alpha)/\sigma_{\min}, (M_aM_a')^{1/2} \min\left\{\hat \ell, \sqrt{\hat\ell/2\sigma_{\min}}\right\} \right\} < 1
\end{equation}
is a sufficient condition for $\Tr^{-1}\cK$ to be a contraction. From this we can distinguish two different ``physical regimes'' that ensure contractivity:
\begin{itemize}
\item having $\zeta < 1$ or $(\sigma_{\max}-\alpha)/\sigma_{\min} <1$ can be interpreted as quantifying the dominance of transport with respect to scattering with $\sigma(x,\s)$ not varying
too much in its arguments.
This condition is a quantification of the well-known fact that DOM converges at a slower rate when collisions become more and more significant with respect to transport.
\item having $(M_aM_a')^{1/2} \min\left\{\hat \ell, \sqrt{\hat\ell/2\sigma_{\min}}\right\}<1$
happens when $\hat \ell =\diam(\D)$ is sufficiently small or $\sigma_{\min}/M_aM_a'$ sufficiently large, which is another expression to quantify how much transport effects dominate with respect to the scattering.
\end{itemize}
Of course, these conditions cannot be expected to hold in all relevant application scenarios.
However, they are going to play a crucial role in what we call \emph{preconditioning} on the continuous level,
ensuring convergence in the infinite dimensional continuous case.

\section{Step (II)---Idealized Iterations}
\label{sec:strategy}
\dw{We are now prepared to identify viable outer iterations of the form
\begin{equation}
\label{geniter}
u_{n+1} = u_n + \cP(f- \op u_n),\quad n=0,1,2,\ldots ,
\end{equation}
(see Step (II) in Section \ref{ssec:1.2}). In the following, we will work with
the pair of trial and test spaces $U,\,V$, given in \eqref{F1}, that is
\begin{align*}
U &= L_2(\D\times \S), &
V &= H_{0,+}(\D\times \S),
\end{align*}
where we abbreviate in what follows $\|v\|_V\coloneqq \|v\|_{H(\D\times \S)}$.
Of course, the \emph{preconditioner}
 $\cP\in \cL(V',U)$ is a  to be chosen in such a way that
\begin{equation}
\label{errorred}
\exists\, \rho <1\,\, \mbox{such that }\,\, \|u_{n+1}-u\|_U \le \rho \|u_n - u\|_U,\quad n\in \N,
\end{equation}
which holds if and only if $\|\id -\cP\op\|_{\cL(U,U)}\le \rho <1$. Note that for the variational formulation
(F1) the residual $f-\op v$ is, by Theorem \ref{thm:Binverse}, well-defined in $V'$ for any $v\in U$.

Recalling Remark \ref{rem:Neuman}, we consider two distinct problem regimes.
}
 \begin{remark}
 \label{rem:ess}
\dw{The operator equation $\op u=f$ implies homogeneous inflow-boundary conditions.}  Incorporating \dw{inhomogeneous} boundary
 conditions could be treated by taking any function $w$ in the domain of $\op$ that satisfies the required
 boundary conditions and subtract $f_b \coloneqq \op w$ from $f$ reducing the problem to homogeneous conditions.
 \end{remark}
 \vspace*{-4mm}

\om{
\subsection{Dominating Transport\texorpdfstring{: $\|\Tr^{-1}\cK\|_{\cL(U,U)}\le \rho <1$}{}}
\label{ssec:ideal}
If we have the contraction
\begin{equation}
\label{contr2-0}
\|\Tr^{-1}\cK\|_{\cL(U,U)}\le \rho <1,
\end{equation}
then $\cP\coloneqq\Tr^{-1}$ is an admissible preconditioner. In fact, iteration \eqref{geniter} becomes
 \begin{equation}
\label{fp1}
u_{n+1}= u_n + \Tr^{-1}(f- \op u_n) = \Tr^{-1}(\cK u_n + f),\quad n\in \N_0,
\end{equation}
and obviously satisfies \eqref{errorred}, ensuring convergence in $U$ to the solution $u$ of the radiative transfer problem
\begin{equation}
\label{opeq2}
\op u = (\Tr -\cK)u = f.
\end{equation}
In particular, it follows that for any initial guess $u_0$
\begin{equation}
\label{errorn}
\|u- u_n\|_{U} \le \rho^n\|u-u_0\|_{U}.
\end{equation}
}
\vspace*{-3mm}
\om{
\subsection{Dominating Scattering\texorpdfstring{: $\|\Tr^{-1}\cK\|_{\cL(U,U)}\ge 1$}{}}
\label{ssec:domscat}
Throughout this section we continue to assume that \eqref{accretive} holds with some $\alpha >0$.

To find a substitute for the preconditioner $\cP=\Tr^{-1}$ of the transport dominated regime, consider for some fixed $a>0$
\begin{equation*}
\label{hatT}
\hatTr{a} \coloneqq \Tr + a\, \id, \qquad
\hatop{a} \coloneqq \hatTr{a} - \cK
\end{equation*}
and take $\cP \coloneqq \hatop{a}^{-1}$ in \eqref{geniter}. This leads to the (ideal)  iteration
\begin{equation}
\label{fp2}
u_{n+1} = u_n + (\hatTr{a} -\cK)^{-1}(f- (\Tr -\cK)u_n)
= a\,\hatop{a}^{-1}\big( u_n + a^{-1}f\big),\quad n\in\N_0,
\end{equation}
where we have used that $(\hatTr{a} -\cK)^{-1}(\Tr -\cK)= (\hatTr{a} -\cK)^{-1}(\hatTr{a} -\cK - a\id) = -\id + a(\hatTr{a} -\cK)^{-1}$.

Thus, to ensure convergence we need that $\|a(\hatTr{a} -\cK)^{-1}\|_{\cL(U,U)}$ is a contraction. Note that this is satisfied for any $a>0$ since, by Proposition \ref{prop1}, we have that $(\hatop{a} v,v)\ge \alpha + a$, which by Theorem \ref{thm:Binverse} gives
\begin{equation}
\label{invbound}
\|a(\hatTr{a} -\cK)^{-1}\|_{\cL(U,U)} \le \frac{a}{a+\alpha} < 1.
\end{equation}
So \eqref{fp2} converges in $U=L_2(\D\times \S)$ to the true solution $u$ with the error reduction rate $a/(a+\alpha)$ for any fixed $a>0$.

\begin{remark}
Notice that $\cP = \hatop{a}^{-1}$ can be derived from a different perspective. Consider the time dependent initial-boundary value problem
\begin{align}
\label{timedep}
\partial_t u + \Tr u - \cK u &= f, &
u(0,\cdot) &= u^0 \text{ in $\D$} &
\restr{u}{\Gamma_-} &= 0,
\end{align}
(where $f$, $\Tr$, $\cK$ are still independent of $t$). Denoting by
$u_n$ the approximation of $u(t_n)$, $t_n = n\tau$, its backward-Euler
semi-discretization in time reads
\begin{equation*}
\frac{u_{n+1}- u_n}{\tau} + \Tr u_{n+1} - \cK u_{n+1} = f,\quad n\in \N_0,
\end{equation*}
which gives
\begin{equation}
\label{bEuler}
(\tau^{-1} \id + \Tr -\cK)u_{n+1} = \tau^{-1} u_n + f,\quad n\in \N_0.
\end{equation}
This coincides with \eqref{fp2} for $a = \tau^{-1}$.
\end{remark}
}

\om{
\section{Step (III)---Perturbed Iterations and the Main Algorithm}\label{sec:periter}
The practical realization of the scheme boils down to two tasks:
\begin{itemize}
\item[(T1)] {\it Formulate a perturbed version of algorithms \eqref{fp1} and \eqref{fp2} with suitable error tolerances $\eta_n$ that still guarantee convergence to the exact continuous solution.}
\end{itemize}
For this task, it will be convenient to use the following notational
convention: Given an operator $\cG \in \cL(U,Y)$, we denote for any
$\eta >0$ by $[\cG, w;\eta]$ an element in $Y$ satisfying $\|\cG w - [\cG,w;\eta]\|_Y \le \eta$. Specifically, for our purposes we require a routine to approximately apply the kernel, that is,
 \begin{equation}
\label{apply}
[\cK,v;\eta] \to z_\eta\quad \mbox{such that }\quad \|\cK v- z_\eta\|_{V'} \le \eta.
\end{equation}
Likewise the source is generally not given exactly and has to be approximated
\begin{equation}
\label{rhs}
[f;\eta] \to f_\eta \quad \mbox{such that }\quad \|f- f_\eta\|_{V'}\le \eta.
\end{equation}
The approximation $[f;\eta]$ of $f$ depends on how the data are given. Finally, given a right hand side $g\in V'$, we have to provide a transport solver
\begin{equation}
\label{solve}
[\Tr^{-1},g;\eta] \to u_\eta \quad \mbox{such that }\quad \|u_\eta - \Tr^{-1}g\|_U\le \eta,
\end{equation}
where, as before, $\Tr$ is viewed as a mapping from $U$ onto $V'$ with $U=L_2(\D\times \S)$, $V= H_{0,+}(\D\times \S)$.
\begin{itemize}
\item[(T2)] {\it Specify how to realize the above routines  in \eqref{apply}, \eqref{rhs}, and \eqref{solve}.}
\end{itemize}
In this section we concentrate only on (T1) and \emph{assume} for the moment that the routines \eqref{apply}, \eqref{rhs}, and \eqref{solve} are available. These routines are detailed later on in Sections \ref{sec:4} and \ref{sec:5}.

\subsection{Dominating Transport\texorpdfstring{: $\|\Tr^{-1}\cK\|_{\cL(U,U)}\le \rho <1$}{}}
\label{ssec:asti}
An approximate realization of the ideal scheme \eqref{fp1} is
\begin{align}
\label{fp1p}
\bar u_{n+1} = [ \Tr^{-1}, [\cK, \bar u_n; \eta_\cK] + [f;\eta_f];\eta_\Tr ], \quad n\geq 0.
\end{align}
\dw{In the following we take for simplicity $u_0=0$. Any other choice for $u_0$ that exploits additional information would, of course,} be possible.
\dw{We choose the individual tolerances proportional to
\begin{equation}
\label{etan}
\eta_n = (1+n)^{-\beta}\rho^n,
\end{equation}
for some fixed $\beta>1$ ($\beta=1.5$ in later numerical experiments). Specifically,
we set
$$
\eta_\cK \coloneqq \kappa_1\eta_n, \quad \eta_f \coloneqq \kappa_2\eta_n, \quad \eta_\Tr \coloneqq \kappa_3\eta_n,
$$
where the parameters $\kappa_1,\kappa_2,\kappa_3 \geq 0$ satisfy
\begin{equation}
\label{kappa}
C_\Tr (\kappa_1 + \kappa_2) + \kappa_3 \le 1,
\end{equation}
with the upper bound $\|\Tr^{-1}\|_{\cL(V',U)} \le C_\Tr$ from \eqref{contTminus}.}

\dw{
In addition we need an upper bound for $\|u\|_U$. A first simple estimate that can be obtained  from \eqref{accretive} or \eqref{L2B}
\begin{equation}
\label{err0}
\Vert u \Vert_U
\leq
\|\op^{-1}\|_{\cL(V',U)}\|f\|_{V'}\le \alpha^{-1}\|f\|_{L_2(\D)}.
\end{equation}
Since this may be rather pessimistic
when $\alpha$ is small we
take
$$
b_0(u)\coloneqq \alpha^{-1}\|f\|_{L_2(\D)}
$$
only as an \emph{initialization} which is refined during the course of the iteration based on a posteriori information. In the following, we will work with
\begin{equation*}
\ub[n+1] \coloneqq \min\big\{\ub[n], \|\bar u_{n+1}\|_U + (\rho \,\ub[n] +\zeta(\beta))\rho^{n-1}\big\},\quad n\geq 0,
\end{equation*}
which is an upper bound that converges to $\|u\|_U$.
}


\dw{We are now prepared to present a detailed account of the perturbed iteration \eqref{fp1p} in terms of the following Algorithm \ref{alg:asti}
called \emph{Adaptive Source Term Iteration} (\ASTI).}
We prove in Theorem \ref{th:terminates} that for dominating transport $\ASTI[\Tr,\cK,f;\e]$ computes an approximate solution $u_\e$ such that $\|u-u_\e\|_{U}\le \e$.


\begin{algorithm}[h]
  \begin{algorithmic}[1]
   \State Fix $\kappa_1$, $\kappa_2$, $\kappa_3$ according to \eqref{kappa}, fix $\beta >1$,
   estimate $\rho$ by \eqref{TK1}, and choose $\ub[0]$ e.\,g., as in \eqref{err0}.
    \State $n \gets 0$ \label{alg:initn}
    \State $\bar u_n \gets 0$
    \State $\text{err} \gets \ub[0]$
    \State $\ub \gets \ub[0]$
    \While{$\text{err} > \e$}
      \State $\eta_n \gets  {(1+n)^{-\beta}\rho^n}$
      \State $w \gets [\cK,\bar u_n ;\kappa_1\eta_n]$
      \State $g \gets [f;\kappa_2\eta_n]$
      \State $\bar u_{n+1} \gets [\Tr^{-1}, w + g;\kappa_3\eta_n]$
      \State $\text{err} \gets  { (\rho \, {\ub} + \zeta(\beta)) \rho^n}$
      \State $\ub \gets \min\big\{\ub, \|\bar u_{n+1}\|_U+(\rho \ub +\zeta(\beta))\rho^{n-1}\big\}$
      \State $n \gets n+1$
    \EndWhile
    \State $u_\e\gets \bar u_n$
  \end{algorithmic}
  \caption{$\ASTI[\Tr,\cK,f;\e]\to u_\e$}
  \label{alg:asti}
\end{algorithm}

\subsection{Dominating Scattering\texorpdfstring{: $\|\Tr^{-1}\cK\|_{\cL(U,U)}\ge 1$}{}}
\label{ssec:geone}
For a given $a>0$, the approximate realization of the scheme \eqref{fp2} takes the form
\begin{equation}
\label{fp2p}
\bar u_{n+1} = [a\hatop{a}^{-1}, \bar u_n + [a^{-1}f;\eta_{n}];\eta_n], \quad n\in \N_0,
\end{equation}
where the stage dependent tolerances $\eta_n$ are chosen as in \eqref{etan}.

To \dw{render the approximate application of the preconditioner $a\hatop{a}^{-1}$ practical, we choose the parameter $a$ in such a way that the operator $\hatop{a}$ is \emph{transport dominated},  so that we can resort to the \ASTI~algorithm for its approximate inversion.
To that end, recall from \eqref{TK1} that $\|\Tr^{-1}\cK\|_{\cL(U,U)}$ is estimated in terms of quantities $\zeta, \gamma$ from \eqref{gamma}.
When $\Tr$ is replaced by $\hatTr{a}$ these quantities depend on $a$ and are therefore denoted for clarity by $\gamma_a$, $\zeta_a$.
Since the quantities $\bar\sigma$, $\bar\sigma'$ are not affected by the parameter $a$, we have
\begin{align*}
\gamma_a &\le
\frac{\sigma_{\max}-\alpha}{\sigma_{\min}+a}, &
\zeta_a &\le
\frac{(\sigma_{\max}-\alpha)(\sigma_{\max}+a)}
     {(\sigma_{\min}+a)(\sigma_{\min}+a)}.
\end{align*}
In view of the bound \eqref{invbound} for $\Vert a\hatop{a}^{-1}\Vert_{\cL(U,U)}$,  by choosing the parameter $a=a^*$ as the unique solution of
\begin{equation}
\label{besta}
\frac{a}{a+\alpha}
= \frac{(\sigma_{\max}-\alpha)(\sigma_{\max}+a)}
       {(\sigma_{\min}+a)(\sigma_{\min}+a)},
\end{equation}
one obtains simultaneously
\begin{equation}
\label{simult}
\Vert  a^* \hatop{a^*}^{-1} \Vert_{\cL(U,U)} \le \rho^*
\quad\text{and} \quad
\|\hatTr{a^*}^{-1}\cK\|_{\cL(U,U)}\le \rho^* \quad\text{for some $\rho^* <1$.}
\end{equation}
Thus, an error controlled application of the preconditioner $a^*\hatop{a^*}^{-1}$ is given for any right hand side $g$ and accuracy $\eta$ as
\begin{equation}
\label{nowB}
[\hatop{a^*}^{-1}, g;\eta] = \ASTI[\hatTr{a^*},\cK,g;\eta].
\end{equation}
}

Note that the algorithm consists now in nesting the outer iteration with an inner \ASTI~iteration for the application of the preconditioner. It is thus straighforward to formulate a general \emph{Nested ASTI}~scheme, where $\solve[\op,f;\e]$ generates an approximate solution $u_\e$ such that $\|u-u_\e\|_{U}\le \e$ even when scattering dominates in $\op$ (see Algorithm \ref{alg:solve}).


\begin{algorithm}
  \begin{algorithmic}[1]
  \State $\rho \gets$ Estimate $\Vert \Tr^{-1} \cK \Vert_{\cL(U,U)}$ using upper bound of \eqref{TK1}.
  \If {$\rho < 1$}
    \Comment{Dominating transport}
    \State $u_\e \gets \ASTI[\Tr, \cK, f; \e]$
  \Else
    \Comment{Dominating scattering}
    \State Estimate $a^*$ from \eqref{besta}, estimate $\rho^*$ from \eqref{simult}, fix $\beta >1$.
    \State $n \gets 0$
    \State $\bar u_n \gets 0$
    \State $\text{err} \gets \ub[0]$
    \State $\ub \gets \ub[0]$
    \While {$\text{err}>\e$}
      \State $\eta_n \gets  {(1+n)^{-\beta}(\rho^*)^n}$
      \State $g \gets \bar u_n + [(a^*)^{-1} f; \eta_n]$
      \State $\bar u_n = a^* \ASTI[\hatTr{a^*}, \cK, g; \e]$
      \State $\text{err} \gets \left( \rho^* \ub + (1+a^*)\zeta(\beta) \right) (\rho^*)^n $
      \State $\ub \gets \min\big\{\ub, \|\bar u_{n+1}\|_U+((\rho^*) \ub + (1+a^*)\zeta(\beta))(\rho^*)^{n-1}\big\}$
      \State $n \gets n+1$
    \EndWhile
    \State $u_\e\gets \bar u_n$
  \EndIf\\
  \Return $u_\e$
  \end{algorithmic}
  \caption{$\solve[\op,f;\e]\to u_\e$}
  \label{alg:solve}
\end{algorithm}

\subsection{Convergence of \texorpdfstring{$\solve[\op,f;\e]$}{SOLVE[B,f;e]}}\label{ssec:conv}
\begin{theorem}
\label{th:terminates}
For any target accuracy $\e>0$, Algorithm \ref{alg:solve} terminates and its output
\begin{equation*}
u_\e \coloneqq \solve[\op,f;\e]
\end{equation*}
satisfies
\begin{equation}
\label{uepsilon}
\|u-u_\e\|_U \le \e,
\end{equation}
where $u$ is the exact solution of \eqref{opeq} with respect
to the variational formulation (F1).
\end{theorem}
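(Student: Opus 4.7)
I would split according to the two branches of Algorithm \ref{alg:solve}, proving in each case that the quantity \textrm{err} maintained by the while-loop is a genuine upper bound for $\|u-\bar u_n\|_U$. Termination is then immediate, since \textrm{err} contracts geometrically to $0$, and the stopping criterion $\textrm{err}\le\e$ directly yields \eqref{uepsilon}.

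\textbf{Dominating transport branch.} The key ingredient is a one-step error recursion for the perturbed iteration \eqref{fp1p}. Since the exact solution satisfies the fixed-point identity $u=\Tr^{-1}(\cK u+f)$, I would write
\[
u-\bar u_{n+1} = \Tr^{-1}\cK(u-\bar u_n) + r_n,
\]
where the residual $r_n$ collects the transport-solve error together with $\Tr^{-1}$ applied to the combined kernel- and source-approximation errors. Using the tolerance budget \eqref{kappa} and the bound $\|\Tr^{-1}\|_{\cL(V',U)}\le C_\Tr$ gives $\|r_n\|_U\le\bigl(C_\Tr(\kappa_1+\kappa_2)+\kappa_3\bigr)\eta_n\le\eta_n$. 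Contractivity $\|\Tr^{-1}\cK\|_{\cL(U,U)}\le\rho<1$ then yields by induction
\[
\|u-\bar u_n\|_U \le \rho^n\|u\|_U + \sum_{k=0}^{n-1}\rho^{n-1-k}\eta_k = \rho^n\|u\|_U + \rho^{n-1}\sum_{k=0}^{n-1}(1+k)^{-\beta}.
\]
Since $\beta>1$, $\sum_{k=0}^{\infty}(1+k)^{-\beta}=\zeta(\beta)<\infty$ (Riemann zeta), so $\|u-\bar u_n\|_U\le\rho^{n-1}\bigl(\rho\|u\|_U+\zeta(\beta)\bigr)$. This is exactly the expression for \textrm{err} in Algorithm \ref{alg:asti} provided $\ub[n]\ge\|u\|_U$. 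I would verify the latter by induction: the update $\ub[n+1]=\min\{\ub[n],\|\bar u_{n+1}\|_U+(\rho\ub[n]+\zeta(\beta))\rho^{n-1}\}$ preserves the upper-bound property via the triangle inequality $\|u\|_U\le\|\bar u_{n+1}\|_U+\|u-\bar u_{n+1}\|_U$. Termination follows since $\rho<1$ forces $\textrm{err}\to 0$.

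\textbf{Dominating scattering branch.} The choice \eqref{besta} of $a^*$ simultaneously ensures that $\|a^*\hatop{a^*}^{-1}\|_{\cL(U,U)}\le\rho^*<1$ and that $\|\hatTr{a^*}^{-1}\cK\|_{\cL(U,U)}\le\rho^*<1$, the latter placing the operator $\hatop{a^*}$ itself in the transport-dominated regime. Hence the inner call $\ASTI[\hatTr{a^*},\cK,\cdot\,;\eta]$ meets its prescribed tolerance in $U$ by the analysis of the first branch and realizes an error-controlled approximation of $\hatop{a^*}^{-1}$. Inserting this into the ideal outer iteration \eqref{fp2} and using $u=a^*\hatop{a^*}^{-1}(u+(a^*)^{-1}f)$ yields the analogous recursion
\[
u-\bar u_{n+1} = a^*\hatop{a^*}^{-1}(u-\bar u_n) + \tilde r_n, \qquad \|\tilde r_n\|_U\le (1+a^*)\eta_n,
\]
the extra factor $a^*$ arising because the approximated source is then multiplied by $a^*$. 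Telescoping with rate $\rho^*$ produces $\|u-\bar u_n\|_U\le (\rho^*)^{n-1}\bigl(\rho^*\|u\|_U+(1+a^*)\zeta(\beta)\bigr)$, which matches the \textrm{err} maintained by the outer loop of Algorithm \ref{alg:solve}, and the same monotone $\ub$-update preserves the upper bound on $\|u\|_U$. Termination and \eqref{uepsilon} then follow as before.

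\textbf{Main obstacle.} The algebra of the one-step recursion is routine; the delicate step is the bookkeeping of tolerances — in particular, threading \eqref{kappa} through the composition of kernel application, source evaluation, and transport solve, and then \emph{nesting} this through the inner ASTI call in the scattering branch so that the $(1+a^*)$-amplified perturbation still sits below $\eta_n$. Equally, one must justify that $\ub[n]$ is a bona fide upper bound for $\|u\|_U$ across the $\min$-update, since the a posteriori refinement uses the same error bound it is meant to certify; this is handled by an induction in which the initialization $\ub[0]\ge\alpha^{-1}\|f\|_{L_2(\D)}\ge\|u\|_U$ from \eqref{err0} provides the base case.
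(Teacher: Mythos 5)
Your proof is correct and follows essentially the same strategy as the paper: a one-step error recursion with tolerance budget \eqref{kappa}, geometric-series summation of the perturbation terms into $\zeta(\beta)$, and an inductive argument that $\ub[n]$ stays above $\|u\|_U$. The only (cosmetic) difference is that you run the recursion directly on $u-\bar u_n$ via the fixed-point identity $u=\Tr^{-1}(\cK u+f)$, whereas the paper instead bounds $\|u_{n+1}-\bar u_{n+1}\|_U$ (perturbed vs.\ exact iterate) by a recursion and then adds the ideal-iteration bound $\|u-u_{n+1}\|_U\le\rho^{n+1}\|u\|_U$; the resulting inequality $\|u-\bar u_n\|_U\le\rho^{n-1}(\rho\|u\|_U+\zeta(\beta))$ is identical, so this is merely a reorganization of the same algebra rather than a genuinely different route.
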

\begin{proof}
We first consider the transport dominated case where $\Vert \Tr^{-1} \cK \Vert_{\cL(U,U)}<1$. The algorithm then reduces to \ASTI, that is,
$$
u_e = \ASTI[\Tr, \cK, f; \e].
$$
Let $u_n$ denote the exact iterates of \eqref{fp1} and $\bar u_n$ the ones from the perturbed version \eqref{fp1p}.
By the definition of the respective routines we have for given tolerances
$\eta_\Tr$, $\eta_\cK$, $\eta_f$
\begin{align*}
u_{n+1} - \bar u_{n+1}
&= \Tr^{-1}(\cK u_n + f)
   - [\Tr^{-1}, [\cK,\bar u_n;\eta_\cK] + [f;\eta_f];\eta_\Tr] \\
&= \Tr^{-1}\bigl( \cK (u_n-\bar u_n) \bigr)
   + \Tr^{-1}(\cK\bar u_n - [\cK,\bar u_n;\eta_\cK])
   + \Tr^{-1}(f - [f;\eta_f]) \\
&\phantom{{}={}} + \Tr^{-1}([\cK,\bar u_n;\eta_\cK] + [f;\eta_f])
   - [\Tr^{-1}, [\cK,\bar u_n;\eta_\cK] + [f;\eta_f];\eta_\Tr].
\end{align*}
By the triangle inequality, bound \eqref{contTminus} on $\|\Tr^{-1}\|_{\cL(V',U)}$, and the properties of the routines, we obtain
\begin{equation*}
\|u_{n+1}- \bar u_{n+1}\|_U
\leq \rho \|u_n -\bar u_n\|_U
   + C_\Tr(\eta_\cK + \eta_f) + \eta_\Tr .
\end{equation*}
For $\bar u_0=u_0$ and with the choice
$\eta_\cK \coloneqq \kappa_1\eta_n$, $\eta_f \coloneqq \kappa_2\eta_n$
and $\eta_\Tr \coloneqq \kappa_3\eta_n$ and \eqref{kappa}, we get
\begin{equation*}
\|u_{n+1}- \bar u_{n+1}\|_U
\le \rho \|u_n -\bar u_n\|_U + \eta_n,
\end{equation*}
which, by induction, yields
\begin{align}
\label{1step}
\| \bar u_{n+1}- u_{n+1}\|_U &
\le \sum_{j=0}^{n} \rho^j \eta_{n-j}.
\end{align}
Specifically, taking the same $\eta_n$ as in \eqref{etan} for some fixed $\beta > 1$, we obtain
\begin{align}
\label{ubaru}
\| \bar u_{n+1}- u_{n+1}\|_U
&\leq
 \sum_{j=0}^{n} \rho^j \rho^{n-j} (1+(n-j))^{-\beta}
 = \rho^n \sum_{j=0}^n (1+j)^{-\beta}
\le \zeta(\beta) \rho^n,
\end{align}
where
$ \zeta(\beta) \coloneqq \sum_{j\in \N}j^{-\beta}$
 is the $\zeta$-function.
 Hence, by triangle inequality
\begin{equation}
\label{uideal}
\|u- \bar u_{n+1}\|_U
\leq \rho^{n+1} \|u\|_U + \zeta(\beta)\rho^n.
\end{equation}
Thus, whenever at the $n$th stage of the algorithm  $\|u\|_U \le \ub[n]$, we conclude that
\begin{equation}
\label{better}
\ub[n+1] \coloneqq \min\big\{\ub[n], \|\bar u_{n+1}\|_U + (\rho \,\ub[n] +\zeta(\beta))\rho^{n-1}\big\}
\end{equation}
a bound for $\|u\|_U$ which converges to $\|u\|_U$.
This yields the computable error bound
\begin{equation}
\label{purple}
\|u - \bar u_{n+1}\|_U
\leq (\rho \,\ub[n+1] + \zeta(\beta)) \rho^n
\end{equation}
which completes the proof for the transport dominated case.

For dominating scattering, denoting by $u_n$ the exact iterates
\begin{equation*}
u_{n+1} = a^* \hatop{a^*}^{-1}(u_n + (a^*)^{-1}f), \quad n\in \N_0,
\end{equation*}
we readily obtain
\begin{align*}
\bar u_{n+1}- u_{n+1} &= [a^*\hatop{a^*}^{-1}, \bar u_n + [(a^*)^{-1}f;\eta_n];\eta_n] -  a^*\hatop{a^*}^{-1}(\bar u_n +  [(a^*)^{-1}f;\eta_n])\\
&\quad + a^*\hatop{a^*}^{-1}(\bar u_n +  [(a^*)^{-1}f;\eta_n])-
 a^*\hatop{a^*}^{-1}(\bar u_n + (a^*)^{-1}f)
+
a^*\hatop{a^*}^{-1}(\bar u_n-u_n).
\end{align*}
Hence,
\begin{equation}
\label{111}
\|\bar u_{n+1}- u_{n+1}\|_U
\le a^*\eta_n + \rho^* \eta_n + \rho^*\|\bar u_{n} - u_{n}\|_U.
\end{equation}
We obtain as earlier with $\bar u_0=u_0$
\begin{align*}
\| \bar u_{n+1}- u_{n+1}\|_U &
\le (1+ a^*) \sum_{j=0}^{n} (\rho^*)^j \eta_{n-j}.
\end{align*}
Specifically, taking $\eta_n$ from \eqref{etan} we get, on account of
\eqref{errorn},
\begin{equation}
\label{barun+1+}
\|u-\bar u_{n}\|_U
\le \bigl( \rho^*
\|u-u_0\|_U+ (1+ a^*) \zeta(\beta)\bigr) (\rho^*)^{n-1}
, \quad n\in \N,
\end{equation}
and hence the same type of bound as in \eqref{uideal} for the transport dominated case.
\end{proof}

\begin{remark}
\label{rem:betterbu}
The recursion \eqref{better} successively mitigates a possibly
over-pessimistic initial bound $b_0(u)$.
It can be further improved by using the a posteriori bound
$\|u-u_n\|_U \le \frac{\rho}{1-\rho}\|u_n-u_{n-1}\|_U$.
We also have (for $n\ge 2$)
\begin{align*}
\|u-u_n\|_U & \le \frac{\rho}{1-\rho}\big\{\|\bar u_n-\bar u_{n-1}\|_U +\|u_n-\bar u_n\|_U +\|u_{n-1}-\bar u_{n-1}\|_U\big\}\\
&\le
 \frac{\rho}{1-\rho}\big\{\|\bar u_n-\bar u_{n-1}\|_U + \zeta(\beta) (\rho^{n-1}+\rho^{n-2})\big\},
\end{align*}
which is a computable bound replacing $\|u-u_n\|_U$.
However, the calculation of these a posteriori quantities would require storing two consecutive outer iterates.
\end{remark}
}
\dw{
\subsection{Complexity}
We conclude with some qualitative complexity estimates. Further quantifications depend on the realizations of the involved routines.
The number $n(\e)$ of outer iteration steps required to realize $\|u-\bar u_{n(\e)}\|_U \le \e$ is given by
\begin{equation}
\label{ne}
n(\e)= \left\lceil \frac{|\ln \e|+ \ln (\rho
  \ub + a^*\zeta(\beta))}{|\ln \rho|}\right\rceil.
\end{equation}
As detailed in the subsequent section the approximate application of the scatterer is typically dominated by the approximate
inversion of the transport operator. As a consequence, in either version of the outer iteration the computational work per
outer iteration step $n$ is dominated by the computational cost $\cost_\cP(\eta_n)$ of the preconditioner. Hence, the
complexity  $\cost_{\op^{-1}}(\e)$ of solving $\op u=f$ within accuracy $\e$ can be bounded as
\begin{equation}
\label{sumcost}
\cost_{\op^{-1}}(\e) \lesssim \sum_{j=1}^{n(\e)} \cost_{\cP}(\eta_n).
\end{equation}
Assuming that $\cost_{\cP}(\eta)\lesssim \eta^{-\vartheta}$ holds for some positive $\vartheta$ (which is actually realistic as
will be seen later), this yields
\begin{align}
\label{totalcost2}
\cost_{\op^{-1}}(\e)
&\lesssim \sum_{j=1}^{n(\e)} \rho^{-j\vartheta} (1+j)^{\beta\vartheta}
\le (1+n(\e))^{\beta\vartheta} \sum_{j=0}^{n(\e)} \rho^{-j\vartheta}\nonumber\\
&
\le \frac{\rho^{-n(\e)\vartheta}}{1-\rho^\vartheta}(1+n(\e))^{\beta\vartheta}\le C \e^{-\vartheta}|\ln \e|^{\beta\vartheta},
\end{align}
where $C=C(\beta,\vartheta,\rho,u)$ is a constant depending on $\beta$, $\vartheta$, $\rho$ and a bound $\ub$ for $\|u-u_0\|_U$.
As a result, the cost of approximately inverting $\op$ is, up to a logarithmic factor, of the order of the one for the application of the preconditioner
with the same accuracy, that is
\begin{equation}
\label{best}
\cost_{\op^{-1}}(\e) \lesssim |\ln \e|^{\beta\vartheta}\cost_{\cP}(\e).
\end{equation}
The cost of the preconditioner, in turn, depends on the problem regime.
 For dominating transport $\cost_\cP(\e) = \cost_{\Tr^{-1}}(\e)$, while for dominating scattering
the approximate application of $a^*\hatop{a^*}$ within accuracy $\e$ requires (in the inner iteration) invoking
$\ord(|\ln \e|/|\ln \rho^*|)$ times an $\e$-accurate transport solve, i.\,e.,
$\cost_\cP(\e) \lesssim \cost_{\Tr^{-1}}(\e) |\ln \e|/|\ln \rho^*|$.

In summary, the overall computational complexity for a given target accuracy is essentially determined by the cost
of error-controlled transport solves (provided that a reasonably efficient approximate application scheme for the
scatterer is at hand).
A posteriori bounds for transport solvers are therefore pivotal. Moreover, since the target tolerances $\eta_n$ are
gradually tightened, early stages of the outer iteration (and its preconditioners) require only correspondingly
cruder accuracy tolerances so that (up to logarithmic factors) the total complexity is dominated by the cost of
the last outer iteration step.

The remainder of the paper is devoted to realizations of $[\cK,v;\eta]$ and $[\Tr^{-1},g;\eta]$.

}

\section{The \texorpdfstring{routine $[\cK,v;\eta]$}{scattering routine}}
\label{sec:4}
\subsection{Introductory comments}
The scheme $\ASTI$ requires the application of the global operator $\cK$ within dynamically updated accuracy tolerances.
\dw{We present in this section an efficient error-controlled approximate application scheme that makes use of \emph{wavelet-compression}
and \emph{low-rank} approximations. Fully nonlinear versions with even better scaling are postponed to forthcoming work.}

We confine the discussion to  the class of kernels of the form \eqref{ker1}, that is
$K(x,\s,\s') = \kappa (x)\kk(\s,\s')$, $\kk(\s,\s') = \kk(\s',\s)$, with
$\kk(\s,\s')\ge 0$, $\s,\s'\in\S$, $\kappa \ge \kappa_0>0$,
 and the normalization
\begin{equation}
\label{normaiizeG}
\int_{\S}\kk(\s,\s')\,\ds' = \int_{\S}\kk(\s,\s')\,\ds = 1, \quad \s,\, \s'\in \S.
\end{equation}
In the following, we adhere to the notation
\begin{equation*}
\cK_0 v \coloneqq \int_\S \kk(\cdot,\s')v(\s')\,\ds'.
\end{equation*}
The simplest examples are \emph{isotropic} and  \emph{Rayleigh type scattering} which are respectively of the form
\begin{equation}
\label{isoscat}
\kk(\s,\s') \coloneqq |\S|^{-1},\quad \kk(\s,\s')=c\left(1+(\s\cdot\s')^2\right).
\end{equation}
Another variant of interest, used in \cite{Kanschat2009}, is given in terms of the similar expansion
\begin{equation}
\label{Tcheb}
\kk(\s,\s')=\sum_{n=0}^\infty a_n T_n(\s\cdot\s'),
\end{equation}
with $a_n\geq 0$ and $T_n$ being the $n$th Chebyshev polynomial,
$
T_n(x)\coloneqq \cos \left(n\arccos(x) \right)$, for $ |x|\leq 1$. It is shown in \cite[Lemmata 2 and 3]{Kanschat2009} that $\cK$ is positive semi-definite with this type of kernel.

In our numerical scheme we focus on
\emph{Henyey--Greenstein} type scattering represented by
\begin{align}
\label{eq:henyey}
\kk_\gamma (\s,\s') \coloneqq
\begin{cases}
\frac{1}{2\pi}\frac{1-\gamma^2}{1+\gamma^2-2\gamma \s\cdot\s'},& \text{if $d_{\S}=1$},\\
\frac{1}{4\pi}\frac{1-\gamma^2}{(1+\gamma^2-2\gamma \s\cdot\s')^{3/2}},& \text{if $d_{\S}=2$},
\end{cases}
\end{align}
where $d_{\S}=d-1$ denotes the dimension of the parameter domain. This scattering model is widely used among physicists and was introduced in \cite{HG1941} to describe anisotropic effects via the parameter $-1\leq \gamma\leq 1$. When $\gamma\geq 0$, the scattering is called \emph{forward-peaked} and $\cK_0$ is positive semi-definite.
Moreover, for $d_{\S}=2$ one has the expansion
\begin{equation}
\label{Legend}
\frac{1}{(1+\gamma^2-2\gamma \s\cdot\s')^{3/2}}=\sum_{n=0}^\infty \gamma^n P_n(\s\cdot\s')
\end{equation}
where $P_n$ is the Legendre polynomial of degree $n$.
Note that the closer $\gamma$ comes to one, the slower is the decay and the larger is the model error when replacing $\kk$ by a truncated
expansion in favor of an efficient application of the scatterer to a given input.

Our focus on Henyey--Greenstein type scattering is mainly motivated by the fact that varying
the parameter $\gamma$ allows us to quantitatively investigate different scattering regimes guiding the
search for possibly different ways of exploiting sparsity.

The specification of $[\cK, \bar u;\cdot]$ depends on the following \emph{input format} of $\bar u\in L_2(\D\times\S)$.
As explained in  Section \ref{sec:5}, $\bar u$ is the output of a Discontinuous Petrov--Galerkin transport solver. It is
a piecewise polynomial of degree $m$, subordinate to some current partition $\pD$
 of the spatial domain $\D$ and whose coefficients are piecewise polynomials in the direction parameter $\s\in \S$.
Thus,  $\bar u$ has the form
\begin{equation}
\label{inputbaru}
\bar u(x,\s) = \sum_{T\in \pD, i\in \cI_T} v_{T,i}(\s)\varphi_{T,i}(x),
\end{equation}
where the spatial shape functions $\varphi_{T,i}$, $i\in \cI_T$ are an orthonormal basis for $\PP_m(T)$ and each
parameter dependent coefficient $v_{T,i}$
is an element of $\PP_M(\pS)$ where $\pS$ is a partition of $\S$. Hence,
\begin{equation}
\label{Kbaru}
(\cK \bar u)(x,\s) = \sum_{T\in \pD_h, i\in \cI_T} (\cK_0v_{T,i})(\s)\kappa(x)  \varphi_{T,i}(x).
\end{equation}
The simplest realization of $[\cK,\cdot;\cdot]$ rests on computing $\eta$-accurate approximations $w_{T,i} = [\cK_0, v_{T,i};\eta]$
to $(\cK_0v_{T,i})$
so that (by orthonormality),
\begin{align}
\label{Keta}
[\cK,\bar u;\eta]
\coloneqq \sum_{T\in \pD_h,i\in \cI_T} w_{T,i}\kappa \varphi_{T,i},
\qquad
\big\|\cK\bar u - [\cK,\bar u;\eta]\big\|_U &\le \eta.
\end{align}
We focus therefore in what follows on the approximate application of $\cK_0$ in the domain $\S$.

\subsection{Matrix representations of \texorpdfstring{$\cK_0$}{the scattering kernel}, Alpert wavelets}
\label{ssec:Alpert}
Suppose that $\Psi =\{\psi_\lambda \mid \lambda \in \Lambda\}$ is an \emph{orthonormal} basis of $L_2(\S)$
where $\Lambda$ is a suitable infinite index set. Then, defining
\begin{align}
\label{bkkwaverep}
\kk^\Psi_{\lambda,\lambda'} &\coloneqq
(\kk,\psi_\lambda\otimes\psi_{\lambda'})_{\S\times \S}
= (\psi_\lambda, \cK_0\psi_{\lambda'})_\S, &
\bkk^\Psi &\coloneqq \bigl(\kk^\Psi_{\lambda,\lambda'}\bigr)
                                    _{\lambda,\lambda'\in \Lambda}.
\end{align}
one has
\begin{equation}
\label{Gwave}
\kk(\s,\s') = \sum_{\lambda,\lambda' \in\Lambda}\kk^\Psi_{\lambda,\lambda'}\psi_\lambda(\s) \psi_{\lambda'}(\s'),
\end{equation}
i.\,e., $\bkk^\Psi$ is an \emph{exact representation} of
the kernel $\kk$ and the associated operator in terms of an \emph{infinite} matrix. By orthonormality of $\Psi$
we have
 \begin{equation}
 \label{spectralnorms}
 \|\bkk^\Psi\|\coloneqq \|\bkk^\Psi\|_{\cL(\ell_2(\Lambda),\ell_2(\Lambda))} = \|\cK_0\|_{\cL(L_2(\S),L_2(\S))}.
 \end{equation}
 An $\eta$-accurate application of $\cK_0$ will be accomplished by identifying a ``compressed'' finite submatrix $\bkk^\Psi_\eta$ of
 $\bkk^\Psi$ that reduces the approximate application of $\cK_0$ to an efficient matrix-vector multiplication.

As an appropriate choice for $\Psi$ we advocate so called \emph{Alpert wavelet bases} of (at least) degree $M$ from \eqref{inputbaru}.
For the convenience of the reader we briefly recapitulate some basic features of Alpert wavelets and refer to \cite{Alpert1993} for further
details.

Starting from some initial partition $\pS_0$ of $\S$ (which could be the trivial one $\{\S\}$) and fixing a rule for splitting each cell $\cell$  in a given
partition into a fixed number  of ``children'' forming the refinement $\cC(\cell)$ of $\cell$, repeated refinements generate an
 infinite ``master-tree'' $\mathbb{T}$
whose nodes are cells and whose edges connect parents with children. We call a finite subtree of $\mathbb{T}$ complete
if a child of a cell  $\cell$ belongs to the subtree if and only if all of $\cC(\cell)$ is contained in the subtree. We consider only complete subtrees.
Then the set of \emph{leaves} of  such a finite subtree forms a so called ``admissible''  partition $\pS$ of $\S$ whose ``refinement history'' is determined by
the subtree, i.\,e., there is a one-to-one correspondence between such (possibly very non-uniform) partitions $\pS$  and subtrees   $\mathbb{T}_\pS$
of $\mathbb{T}$. The $\s$-dependent coefficients $v_{T,i}, w_{T,i}$ in \eqref{ubaru}, \eqref{Keta} will always be piecewise
polynomials of degree $M$ on such admissible partitions. We will make use of two different representations of such piecewise polynomials as described next.
%

 Let $\PP_M(\cell)$ denote the space of polynomials of (total) degree at most $M$ over  the cell $\cell$.
Given   an admissible partition $\pS$ of $\S$,
let $\PP_{M}(\pS)$
denote the space of piecewise polynomials of degree at most $M$, subordinate to the partition $\pS$.
A canonical basis for
$\PP_{M}(\pS)$ is obtained by associating with each cell $\cell\in\pS$ an orthonormal basis
\begin{equation}
\label{PhiP}
\Phi_\cell = \{\phi_\nu \coloneqq \chi_\cell P_{\cell,i} \mid \nu \coloneqq (\cell,i),\,P_{\cell,i}\in \PP_M(\cell), \, i \in \cI_M \coloneqq \{1,\ldots, \dim \PP_M\}\},
\end{equation}
which gives rise to what is sometimes referred to as the orthonormal \emph{scaling function} basis
\begin{equation}
\label{PhiS}
\Phi_{\pS} \coloneqq \bigcup_{\cell\in \pS}\Phi_\cell = \bigl\{\phi_\nu \mid \nu \in \Gamma_{\pS} \bigr\}, \quad  \Gamma_{\pS} \coloneqq
       \bigl\{(\cell,i) \mid \cell \in \pS,\ i\in \cI_M\}\bigr\},
\end{equation}
to be always understood with respect to the uniform Haar measure on $\S$ induced by a convenient parametrization,
i.\,e., $\int_\S \ds = 1$ and $(v,w)_\S=\int_\S vw\,\ds$.

 Alpert wavelets provide alternative bases for such spaces of piecewise polynomials that encode ``updates'' obtained by passing to a refined partition.
 They are therefore better suited for meeting variable target accuracies.
Since $\PP_M(\cell) \subset \PP_M(\cC(\cell))$ one can determine an \emph{orthonormal} set of piecewise polynomials in $\PP_M(\cC(\cell))$.
Setting $ \cJ_M \coloneqq\{1, \ldots, \operatorname{dim}( \PP_M(\cC(\cell)) - \operatorname{dim} \PP_M(\cell))\}$,
\begin{equation}
\label{Alpert}
\Psi_\cell\coloneqq \{\psi_\lambda  \mid \lambda
\coloneqq (\cell,r), \, r\in \cJ_M\}\subset \PP_m(\cC(\cell))
\end{equation}
 spanning the orthogonal complement
$ 
\WW(\cell) \coloneqq \PP_M(\cC(\cell)) \ominus \PP_M( \cell)
$  
between two successive levels of piecewise polynomials.
%
Obviously,
\begin{equation}
\label{APsi}
\Psi \coloneqq \{\psi_\lambda \mid \lambda \in \Lambda\},\quad \Lambda
\coloneqq \{\lambda = (\cell,r)\mid r\in \cJ_M,\, \cell \in \mathbb{T}\}
\end{equation}
is an orthonormal basis for $L_2(\S)$. Clearly, for any admissible partition $\pS$ of $\S$ one easily identifies the subset $\Psi_\pS=\{\psi_\lambda: \lambda \in \Lambda_\pS\}\subset\Psi$ which forms a basis for $\PP_M(\pS)$,
namely
\begin{equation*}
\Lambda_{\pS}
\coloneqq \{\lambda =(\cell,r) \mid r\in \cJ_M,\ \cell \in \TT_{\pS}\}.
\end{equation*}

Alpert bases are easy to construct, in particular, for domains like $\S$.
It is well known that changing from a scaling function representation of an element in $\PP_M(\pS)$ to its Alpert wavelet representation
(and vice versa) can be done at $\ord(\#\pS)$ cost with the aid of the fast wavelet transform. Accordingly, one can efficiently
pass from a scaling function representation of a compressed kernel to its wavelet representation and vice versa.

Moreover, $\psi_\lambda$, $|\lambda|>0$, have \emph{vanishing moments} of  order $M+1$, i.\,e.,
\begin{equation}
\label{vanmom}
(P,\psi_\lambda)_\S = 0 \quad \forall P\in \PP_M(\operatorname{supp} \psi_\lambda).
\end{equation}
This has two important consequences.
First, whenever a submatrix $\bkk^\Psi_\eta$ of $\bkk^\Psi$ is obtained by discarding entries $\kk^\Psi_{\lambda,\lambda'}$ with $|\lambda|+|\lambda'|>0$
the corresponding kernel $\kk_\eta$ still satisfies
\begin{equation}
\label{energy}
\int_{\S\times\S}\kk_\eta(\s,\s') \,\ds\,\ds'= 1.
\end{equation}
Second, \eqref{vanmom} will be shown next to imply that $\bkk^\Psi$ is \emph{nearly sparse} which provides the basis for
an error controlled efficient application of $\cK_0$ through \emph{matrix compression}.

\subsection{Compression of \texorpdfstring{$\bkk^\Psi$}{the wavelet representation of the scattering kernel}}\label{ssec:compress}
As a guiding example, let us consider the case $d=2$ (two spatial variables) such that $\S$ is the unit circle and has dimension $d_{\S} = d-1=1$. Note that the Henyey--Greenstein kernel is then of the form
\begin{equation}
\label{Henyeyex}
\kk_\gamma(\theta, \theta') = c(H_\alpha\circ \delta)(\theta, \theta'),\quad H_\alpha(\varphi)\coloneqq \frac{1}{1-\alpha\cos\varphi}
\qquad\text{and}\qquad
\delta(\theta, \theta') = \theta-\theta'
\end{equation}
where $c=\frac {1-\gamma^2}{2\pi(1+\gamma^2)}$ and $\alpha = \frac{2\gamma}{1+\gamma^2}$.
\begin{prop}
\label{prop:Gcompress}
In the above terms one has
\begin{align}
\label{Gdecay}
 \big|(\kk_\gamma)_{\lambda, \lambda'}\big| &\lesssim 2^{-\big(M+1+ \frac{d_{\S}}2\big)\big||\lambda' |-|\lambda|\big|} 2^{-(M+1+d_{\S})\min\{|\lambda|,|\lambda'|\}}
\max_{\ell\le M+1}\big( \dist(S_\lambda,S_{\lambda'})+ 2^{-|\lambda|} \big)^{M+1-\ell}
\nonumber\\
&\quad\qquad \times \sup_{\theta\in S_\lambda, \theta'\in S_{\lambda'}}|H_\alpha^{(2M+2-\ell)}(\theta - \theta')|.
\end{align}
\end{prop}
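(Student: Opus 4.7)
I would prove Proposition~\ref{prop:Gcompress} by a standard ``double vanishing moments'' Taylor expansion argument, exploiting the special convolution structure $\kk_\gamma(\theta,\theta')= c\,H_\alpha(\theta-\theta')$ so that every partial derivative of the kernel is of the form $(\pm 1)^j H_\alpha^{(i+j)}(\theta-\theta')$. By the symmetry $H_\alpha(\varphi)=H_\alpha(-\varphi)$ the coefficient $(\kk_\gamma)_{\lambda,\lambda'}$ is symmetric in $\lambda,\lambda'$, so I may assume $|\lambda|\le|\lambda'|$; the exponent $||\lambda'|-|\lambda||$ on the right hand side reflects exactly this symmetrization. The ingredients I rely on are the standard Alpert basis properties recalled in Section~\ref{ssec:Alpert}: $\supp\psi_\mu=S_\mu$ with $|S_\mu|\sim 2^{-|\mu|d_{\S}}$, $\|\psi_\mu\|_{L_\infty}\lesssim 2^{|\mu|d_{\S}/2}$, and vanishing moments of order $M+1$ as in \eqref{vanmom}.

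The key step is to perform two successive Taylor expansions inside
\[
(\kk_\gamma)_{\lambda,\lambda'} = \int_{S_\lambda}\int_{S_{\lambda'}} c\,H_\alpha(\theta-\theta')\,\psi_\lambda(\theta)\,\psi_{\lambda'}(\theta')\,d\theta\,d\theta'.
\]
Let $\theta_0$ be the center of $S_\lambda$ and $\theta'_0$ the center of $S_{\lambda'}$. First I expand $\theta'\mapsto H_\alpha(\theta-\theta')$ to degree $M$ around $\theta'_0$. The polynomial part is killed by the vanishing moments of $\psi_{\lambda'}$, leaving an integral remainder controlled by $|\theta'-\theta'_0|^{M+1}\lesssim 2^{-|\lambda'|(M+1)}$ times $H_\alpha^{(M+1)}$ evaluated at an intermediate point $\theta-(\theta'_0+t(\theta'-\theta'_0))$ that still lies in the difference set. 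For any chosen $\ell\in\{0,\dots,M+1\}$ I then expand the resulting function of $\theta$ to degree $M-\ell$ around $\theta_0$; because $M-\ell<M+1$, the polynomial part is annihilated by the vanishing moments of $\psi_\lambda$, and the Taylor remainder carries the factor $|\theta-\theta_0|^{M+1-\ell}\lesssim 2^{-|\lambda|(M+1-\ell)}\le(\dist(S_\lambda,S_{\lambda'})+2^{-|\lambda|})^{M+1-\ell}$ together with a derivative $H_\alpha^{(M+1+(M+1-\ell))}=H_\alpha^{(2M+2-\ell)}$ evaluated at a point of the form $\theta^\ast-\theta'^\ast$ with $\theta^\ast$ in the convex hull of $S_\lambda$ and $\theta'^\ast$ in the convex hull of $S_{\lambda'}$, which justifies replacing its value by the $\sup$ over $S_\lambda\times S_{\lambda'}$.

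It then remains to absorb everything into the claimed scaling. Integrating the pointwise bound of the double remainder against $|\psi_\lambda(\theta)\psi_{\lambda'}(\theta')|$ and using $\|\psi_\mu\|_{L_\infty}|S_\mu|\lesssim 2^{-|\mu|d_{\S}/2}$ for $\mu\in\{\lambda,\lambda'\}$ produces a factor $2^{-(M+1)|\lambda'|}\cdot 2^{-|\lambda'|d_{\S}/2}\cdot 2^{-|\lambda|d_{\S}/2}$; under the assumption $|\lambda|\le|\lambda'|$ this rearranges exactly into $2^{-(M+1+d_{\S}/2)(|\lambda'|-|\lambda|)}2^{-(M+1+d_{\S})|\lambda|}$, matching the first two factors in \eqref{Gdecay}. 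Since the argument works for every $\ell\in\{0,\dots,M+1\}$, the bound is valid in particular when one takes the maximum over $\ell$; the opposite case $|\lambda'|<|\lambda|$ is handled identically after swapping the two expansions.

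The only real subtlety I anticipate is the geometric point: one has to check that the intermediate argument produced by both Taylor remainders---$\theta_0+s(\theta-\theta_0)-(\theta'_0+t(\theta'-\theta'_0))$ for $s,t\in[0,1]$---stays within $\{\theta^\ast-\theta'^\ast:\theta^\ast\in S_\lambda,\theta'^\ast\in S_{\lambda'}\}$, so that the $\sup$ over $S_\lambda\times S_{\lambda'}$ of $|H_\alpha^{(2M+2-\ell)}|$ legitimately majorizes the remainder. Because $S_\lambda,S_{\lambda'}$ are (essentially) arcs and the Alpert wavelets are supported on single cells, this convexity statement is automatic on each cell; a careful writeup just needs to parameterize the arcs by arc length and record that ``Taylor polynomials of degree $\le M$'' in that parameter are exactly what the vanishing moments annihilate.
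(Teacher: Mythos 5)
Your proof is correct and reaches the stated bound, but the mechanism that produces the $\max_{\ell\le M+1}$ is genuinely different from the paper's. In the paper, both Taylor expansions are taken to full degree $M$ (the first in $\theta$ around $\theta_\lambda$, the second in $\theta'$ around $\theta_{\lambda'}$), so the Taylor remainder from the second expansion contains the $(M+1)$st derivative of a \emph{product} $Y(\theta')=(\theta-\theta')^{M+1}H_\alpha^{(M+1)}(\tilde\theta_\lambda-\theta')$; the index $\ell$ then arises from the Leibniz rule applied to that product, which couples the two factors $|\theta-\theta'|^{M+1-\ell}$ and $|H_\alpha^{(2M+2-\ell)}|$, and the distance $\dist(S_\lambda,S_{\lambda'})$ enters as the bound on the $|\theta-\theta'|$ term. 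In your version, the second expansion has a freely chosen degree $M-\ell$ (allowed because $M-\ell\le M$ still lies inside the range killed by the vanishing moments), so each value of $\ell$ produces a separate, complete bound; you then bound $2^{-(M+1-\ell)|\lambda|}\le(\dist+2^{-|\lambda|})^{M+1-\ell}$ to match the stated form and observe that any one of these bounds is dominated by the $\max$. This is logically tight, and in fact it gives a slightly stronger conclusion ($|(\kk_\gamma)_{\lambda,\lambda'}|$ is bounded by the \emph{minimum} over $\ell$ of the individual bounds rather than by a single sum that one then crudely replaces by a max), but it also means that the distance $\dist(S_\lambda,S_{\lambda'})$ appears in your estimate only as a voluntary enlargement rather than as a genuine output of the argument; the paper's Leibniz route is the one that explains \emph{why} the bound naturally involves the distance between supports, which is the quantity that later drives the compression rules in Section \ref{ssec:compress}. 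The reversal of the order of the two expansions ($\theta'$ first, then $\theta$, versus the paper's $\theta$ then $\theta'$) is immaterial by symmetry of $H_\alpha$. Your bookkeeping of the scaling factors, the use of $\|\psi_\mu\|_{L_1}\lesssim 2^{-d_\S|\mu|/2}$, and the observation that the intermediate Taylor point stays in $S_\lambda\times S_{\lambda'}$ all check out.
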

\begin{proof}
Recall that for $\lambda = (\cell,r)$ one has
$S_\lambda \coloneqq \supp \psi_{\lambda} = \cell$. Let us denote then by $\theta_\lambda$ the center of gravity of $S_\lambda$.
Without loss of generality we can assume that $|\lambda|\le |\lambda'|$.
Taylor expansion of $\kk_\gamma$ at $\theta_\lambda$, using a $(M+1)$st order vanishing moments of $\psi_\lambda$,
yields for integration with respect to $\theta$
\begin{equation*}
\int_{-\pi}^\pi H_\alpha(\theta - \theta')\psi_\lambda(\theta)\,\mathrm d\theta
= \int_{-\pi}^\pi (\theta -\theta')^{M+1}
    H_\alpha^{(M+1)}(\tilde\theta_\lambda-\theta')
    \psi_\lambda(\theta)\,\mathrm d\theta,
\end{equation*}
where $\tilde\theta_\lambda$ is some point in $S_\lambda$. Expanding $Y(\theta')\coloneqq (\theta -\theta')^{M+1}H_\alpha^{(M+1)}(\tilde\theta_\lambda-\theta')$
at $\theta_{\lambda'}\in S_{\lambda'}$, yields upon integrating now first with respect to $\theta'$ and using again $(M+1)$st order vanishing moments,
\begin{equation*}
\big|(\kk_\gamma)_{\lambda, \lambda'}\big| \lesssim
\int_{-\pi}^\pi \int_{-\pi}^\pi
  |\psi_\lambda(\theta)| |\psi_{\lambda'}(\theta')|
  |\theta' - \theta_{\lambda'}|^{M+1}
  |Y^{(M+1)}(\tilde\theta_{\lambda'})|
\,\mathrm d\theta'\,\mathrm d\theta.
\end{equation*}
Since $|\theta' - \theta_{\lambda'}|\lesssim 2^{-|\lambda'|}$,
$\|\psi_\lambda\|_{L_1(S_\lambda)}\lesssim 2^{-d_{\S}|\lambda|/2}$ and
since by Leibniz' rule
\begin{align*}
|Y^{(M+1)}(\tilde\theta_{\lambda'})|&\le C_M \max_{\ell\le M+1}\big( \dist(S_\lambda,S_{\lambda'})+ 2^{-|\lambda|} \big)^{M+1-\ell}
\nonumber\\
&\quad \times \sup_{\theta\in S_\lambda, \theta'\in S_{\lambda'}}|H_\alpha^{(2M+2-\ell)}(\theta - \theta')|.
\end{align*}
the assertion follows.
\end{proof}

Of course, for $\alpha <1$ the terms
\begin{equation*}
C(M,\alpha,\lambda,\lambda')\coloneqq
\max_{\ell\le M+1}\big( \dist(S_\lambda,S_{\lambda'})+ 2^{-|\lambda|} \big)^{M+1-\ell}
 \sup_{\theta\in S_\lambda, \theta'\in S_{\lambda'}}|H_\alpha^{(2M+2-\ell)}(\theta - \theta')|
\end{equation*}
are finite. The closer $\alpha$ (and hence $\gamma$) gets to one the larger one expects the second factor to become for
small $\dist(S_\lambda,S_{\lambda'})$. On the other hand, for larger $\dist(S_\lambda,S_{\lambda'})$ the second factor
turns out to be very small. In summary $C(M,\alpha,\lambda,\lambda')$ is bounded by a constant that possibly grows when
$\gamma$ tends to one but for fixed $\gamma$ decreases when $|\lambda|$, $|\lambda'|$ grow regardless of the distance between
the respective supports. $C(M,\alpha,\lambda,\lambda')$ in turn becomes very small when $\dist(S_\lambda,S_{\lambda'})> c_\gamma$
where $c_\gamma$ decreases when $\gamma$ tends to one. This is illustrated in Figure \ref{fig:finger} reflecting
the strong near-sparsity of the representation.
\begin{figure}[ht]
\centering
\includegraphics[width=0.32\textwidth]{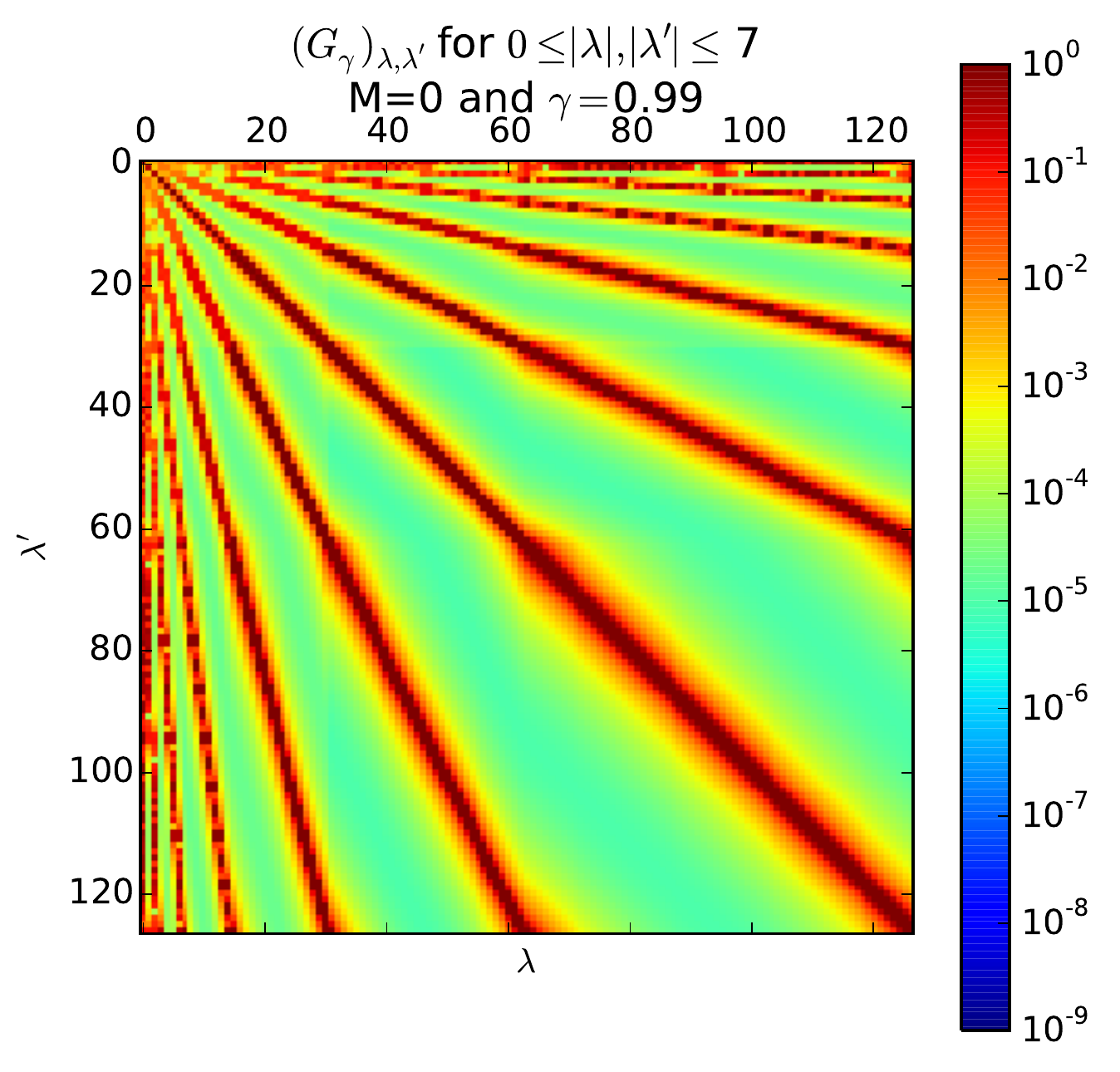}
\includegraphics[width=0.32\textwidth]{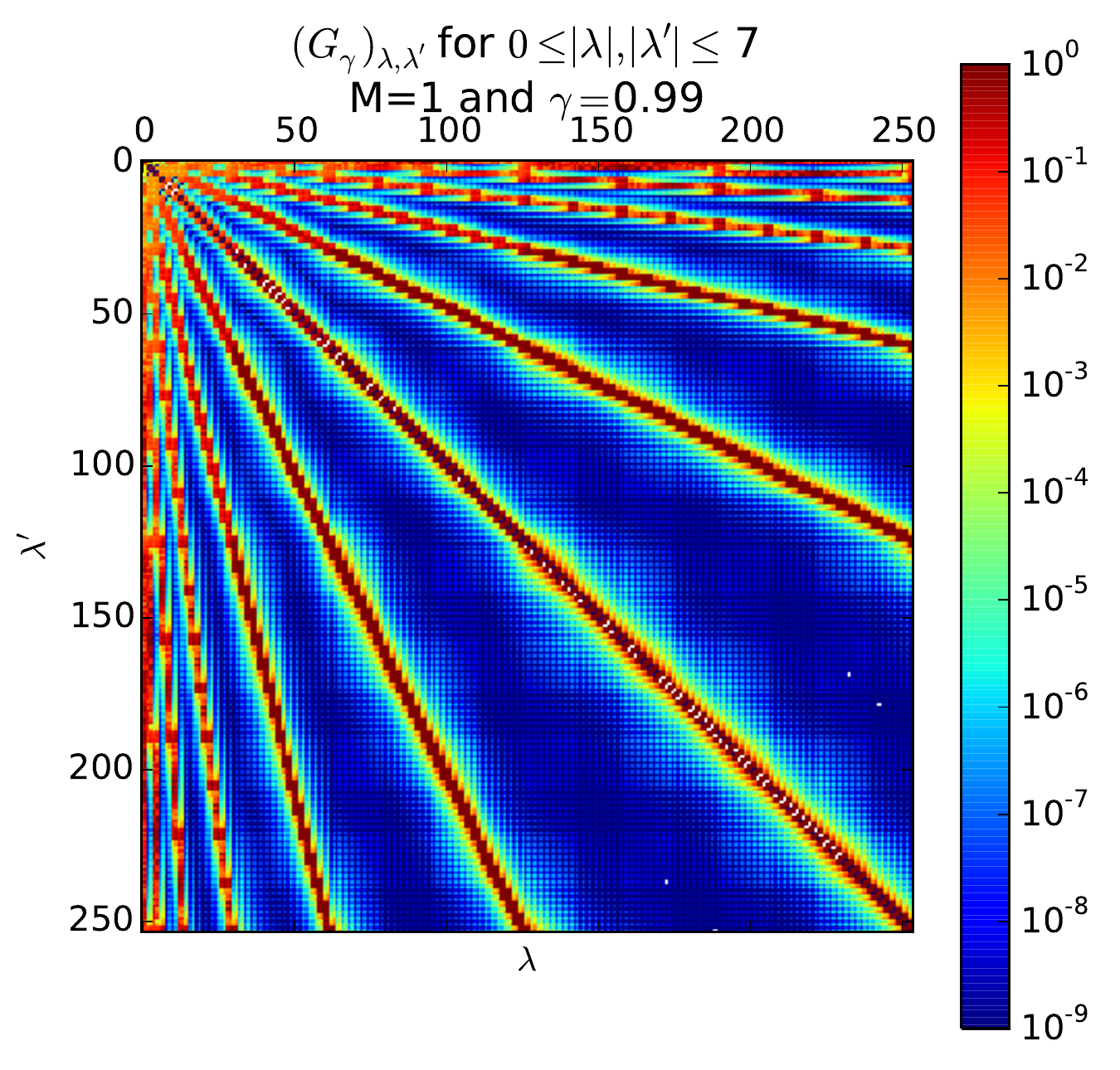}
\includegraphics[width=0.32\textwidth]{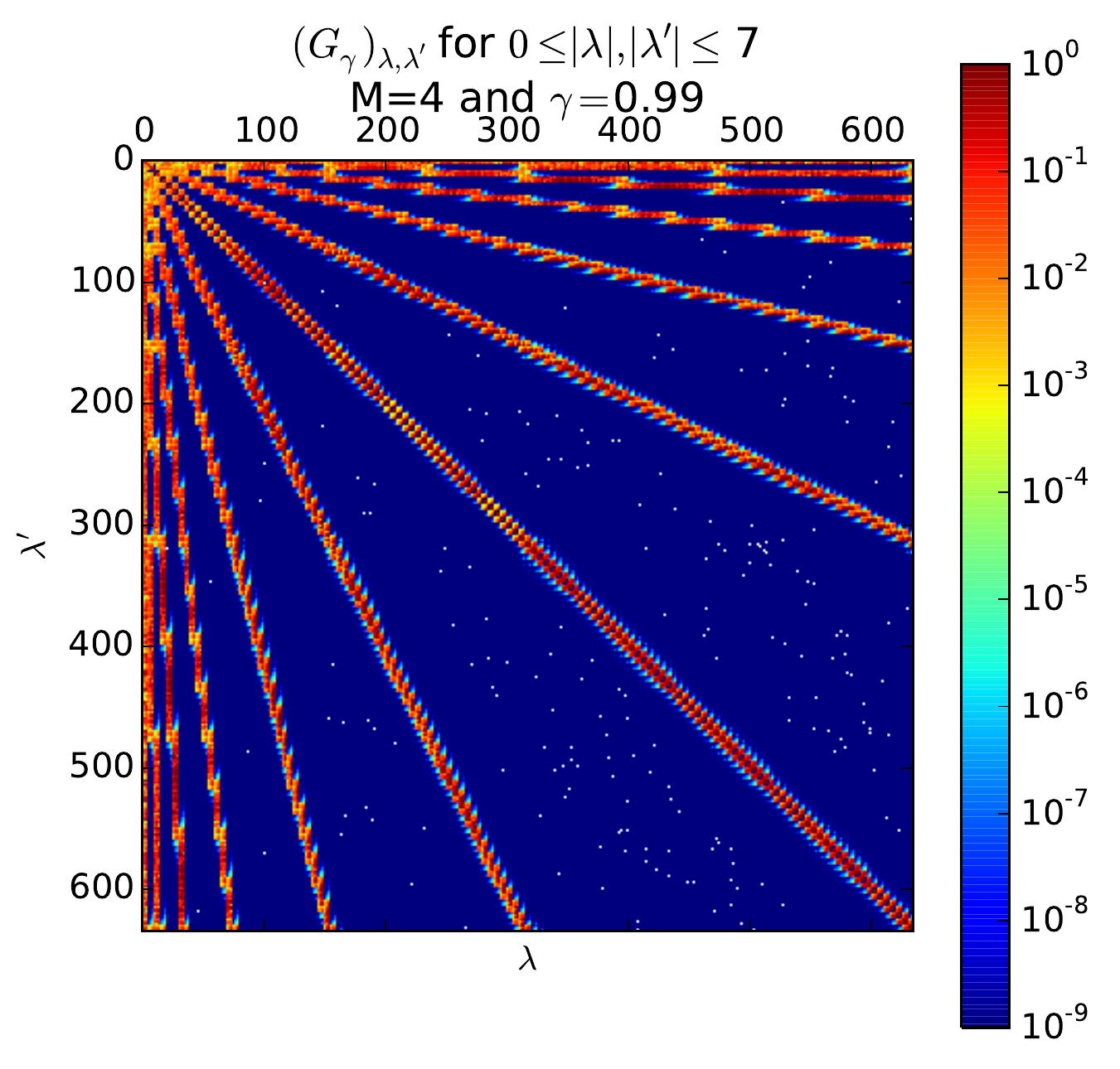}
\caption{Alpert wavelet representation of $\kk_\gamma(\cdot,\cdot)$ of degrees $M= 0,\,1$ and $4$ and $\gamma =0.99$.}
\label{fig:finger}
\end{figure}
 Moreover, defining
\begin{equation*}
d(\lambda,\lambda') \coloneqq 2^{\min\{|\lambda|,|\lambda'|\}}\dist(S_\lambda,S_{\lambda'}),
\end{equation*}
and keeping in mind that $\dist(S_\lambda,S_{\lambda'})$ remains uniformly bounded because of the boundedness of $\S$,
one trivially has $d(\lambda,\lambda')\lesssim 2^{\min\{|\lambda|,|\lambda'|\}}$. Therefore, \eqref{Gdecay} yields the bounds
\begin{equation}
\label{Gdecay2}
 \big|(\kk_\gamma)_{\lambda, \lambda'}\big| \lesssim
 \frac{C(M,\alpha,\lambda,\lambda')
       2^{-\big(M+1+\frac{d_{\S}}2\big) \big||\lambda'|-|\lambda|\big|}}
      {(1 + d(\lambda,\lambda'))^{M+1+d_{\S}}}.
\end{equation}
Treating the term $C(M,\alpha,\lambda,\lambda')$ as a constant, this format allows us to directly invoke results on
wavelet based matrix compression and corresponding \emph{adaptive} approximate application tools, see e.\,g.\ \cite{CDD2001}.
In particular, defining $s^* \coloneqq (M+1)/d_{\S}= M+1$,
\eqref{Gdecay2} ensures that for each $s< s^*$ there exist positive summable sequences $(\alpha_j)_{j\in \N_0}$, $(\beta_j)_{j\in \N_0}$
 and \emph{compressed} versions $\bkk_J$ of $\bkk_\gamma =\bkk$, defined by concrete rules for replacing entries of $\bkk_\gamma$ by zero,
 such that
 \begin{equation}
 \label{Gcompress}
 \|\bkk - \bkk_J\| \le \beta_J 2^{-sJ}, \quad \#(\mbox{entries per row/column}) \le \alpha_J 2^J,\quad J\in \N.
 \end{equation}
 Here $\|\cdot\|= \|\cdot\|_{\cL(\ell_2,\ell_2)}$ denotes the spectral norm.


\subsection{A linear compression scheme}\label{ssec:wavecompr}


Since $\cK_0$ is compact, (unlike the operators considered in \cite{DHS2006}) the entries of $\bkk^\Psi$
decay uniformly with increasing level. Thus, given any ``final'' target accuracy $\e$, one can use \eqref{Gdecay2} to find a level $L=L(\e)\in \N$ such that the finite matrix
$\bkk^\Psi_L\coloneqq \big(\kk_{\lambda,\lambda'}\big)_{|\lambda|,|\lambda'|\le L}$ satisfies $\|\bkk^\Psi-\bkk^\Psi_L\|\le \e$ and hence
\begin{equation}
\label{wish}
\|\cK_0-\cK_{0,L}\|_{\cL(\S,\S)}\le \e,
\end{equation}
which, in turn, controls the accuracy of $\cK$ as pointed out in \eqref{Keta}. $\bkk^\Psi_L$ is computed in a preprocessing step
but could later be updated due to the hierarchical nature of $\Psi$.

Then for any larger tolerance $\eta \ge \e$, arising in the outer iteration, one can combine the compression rules from \cite[Section 4]{DHS2006} with the decay estimates in Proposition \ref{prop:Gcompress} such that the resulting compressed matrix $\bkk^\Psi_\eta$
satisfies
\begin{equation}
\label{wish2}
\|\bkk^\Psi -\bkk^\Psi_\eta\|\le \eta \quad \Leftrightarrow\quad \|\cK_0-\cK_{0,\eta}\|_{\cL(\S,\S)}\le \eta.
\end{equation}
Roughly speaking,
the computational cost for applying $\cK_0$ to an element in $\PP_M(\pS)$
scales like $\#\pS \cdot (\log \#\S)^a$ for some $a>0$.
A first variant of $[\cK_0,\cdot;\cdot]$ is then given by
\begin{align}
\label{first}
[\cK_0,v;\eta] &= \cK_{0,\eta'}v , &
\eta' &\coloneqq \eta /\|v\|_{L_2(\S)},
\end{align}
where the compressed operator $\cK_{0,\eta'}$ is applied \emph{exactly}. In fact, since the approximations
$\bar u$ use the same piecewise polynomial degrees as the kernel representations, orthonormality yields for
$\bar u(x,\s)= \sum_{T\in \pD_h, i\in \cI_T}\Big(\sum_{\lambda\in\Lambda_\pS}
v_\lambda^{T,i}\psi_\lambda(\s)\Big) \varphi_{T,i}(x)$ the scattering
\begin{equation}
\label{Ku}
(\cK_{0,\eta'} \bar u)(x,\s) = \sum_{T\in \pD_h, i\in \cI_T} w_{T,i} (\s)   \varphi_{T,i}(x),
\text{ with }
w_{T,i} (\s) = \sum_{\lambda\in \Lambda_{\eta'}}\Big(\sum_{\lambda'\in \Lambda_\pS}(\kk^\Psi_\eta)_{\lambda,\lambda'}v^{T,i}_{\lambda'}\Big)\psi_{\lambda}(\s),
\end{equation}
where $\Lambda_\eta$ contains the range of indices of $\bkk^\Psi_\eta$. Thus, the $\s$-dependent coefficients $w_{T,i}$ are obtained
by compressed matrix-vector multiplication.

In summary, the computational cost of the resulting routine $[\cK,\bar u;\eta]$
can be reduced to $\ord(\#\pD\cdot\#\pS\cdot(\log(\#\pS))^a)$, where of course $\#\pD$ and $\#\pS$ depend on $\eta$,
typically in an algebraic fashion. For the Henyey--Greenstein kernel such schemes are still
effective when the parameter $\gamma$ gets close to one, see Figure \ref{fig:finger}.

\subsection{
Hilbert--Schmidt expansion of \texorpdfstring{$\kk$}{the scattering kernel}}\label{ssec:HilbertSchmidt}
There is an alternative
way of efficiently applying the scattering operator when the parameter $\gamma$ in the Henyey--Greenstein kernel stays bounded away from one. It uses the fact that,
by our assumptions, the kernel $\kk$ possesses a \emph{Hilbert--Schmidt} decomposition of the form
\begin{equation}
\label{HS}
\kk(\s,\s')= \sum_{k=1}^\infty \sigma_k g_k(\s)g_k(\s'),\quad \sigma_k\ge 0,\quad \sum_{k\in\N}\sigma_k^2 = \|\kk\|_{L_2(\S\times \S)}^2 \le 1,
\end{equation}
where
\begin{equation}
\label{orthonormalg}
(g_k,g_l)_\S = \delta_{k,l},\quad k,l\in \N.
\end{equation}
An approximate
Hilbert--Schmidt decomposition of $\kk$ results from the singular value decomposition (SVD) of the matrix $\bkk^\Psi_L$ from \eqref{wish}
which we denote for simplicity again as $\bkk^\Psi$.

The singular value decomposition then yields vectors
$\bg_k$
such that 
\begin{equation}
\label{HSb}
\bkk^\Psi = \sum_{k=1}^{N_\tau} \sigma_k' \bg_k\otimes \bg_k
\end{equation}
where $N_\tau$ is the rank of $\bkk^\Psi_L$ and $\bg_k $ is the vector of expansion coefficients of $g_k$ with respect to $\Psi$,
i.\,e.,
\begin{align}
\label{agree}
\sigma_k &= \sigma_k', &
g_k &= \sum_{\mu\in\nabla}g_{k,\mu}\theta_\mu \eqqcolon \bg_k^T\Psi, &
k&\in \N. 
\end{align}
We can then consider \emph{low-rank approximations} by further
truncating \eqref{HSb}
\begin{align*}
\kk^r &\coloneqq \sum_{k\le r}\sigma_k g_k \otimes g_k, &
\bkk^r &\coloneqq \sum_{k\le r}\sigma_k \bg_k \bg_k^T.
\end{align*}
This yields
\begin{equation}
\label{truncest}
\|\cK_0 - \cK_0^r\|_{\cL(L_2(\S),L_2(\S))} = \|\bkk^r- \bkk\|_{\cL(\ell_2,\ell_2)} = \sigma_{r+1}.
\end{equation}
The application of the truncated operator $\cK_0^r$ for coarser accuracy tolerances, however, requires further reduction
compressing the arrays $\bg_k$.
The coefficient vectors $\bg_k$, consisting of wavelet coefficients, can easily be compressed by thresholding
providing best $n$-term approximations of desired accuracy. In particular, notice that $\|\bg-\tilde\bg\|_{\ell_2}\le \delta$ implies that
\begin{equation*}
\|\bg \bg^T - \tilde\bg \tilde\bg^T\|
= \|\bg \bg^T - \tilde\bg \tilde\bg^T\|_{\cL(\ell_2,\ell_2)}
\le 2\delta.
\end{equation*}
Thus, thresholding for a given tolerance $\eta$ the basis vectors $\bg_k $ so as to obtain approximations $\bg_{k,\eta}$
satisfying
\begin{equation}
\label{gcompress}
\|\bg_k - \bg_{k,\eta}\|_{\ell_2} \le \frac{\gamma_k \eta}{2\sigma_k},
\end{equation}
with positive weights $\sum_k \gamma_k \leq 1$, one can verify that for
the truncated kernel $\bkk^{\Psi,r}_\eta \coloneqq
\sum_{k=1}^r \sigma_k \bg_{k,\eta}\bg_{k,\eta}^T$ one has
\begin{equation}
\label{Grcompress}
\|\bkk^{\Psi,r} - \bkk^{\Psi,r}_\eta\| \le \eta.
\end{equation}
(Updating the SVD for $\bkk^{\Psi,r}_\eta$ would improve stability.)
As a consequence one obtains for the corresponding operator approximation $\cK_0^{r,\eta}$ and a given
$v(\s) = \sum_{\lambda\in \Lambda_{\pS}}v_\lambda\psi_\lambda(\s)$
\begin{equation}
\label{Kappr}
\|(\cK _0- \cK_0^{r,\eta})v\|_{L_2(\S)} \le \Big\{\sigma_{r+1}\|v\|_{L_2(\S)} +
 \eta\Big(\sum_{\lambda\in \Lambda_{\pS}}|v_\lambda|_{L_2(\D)}^2\Big)^{1/2}\Big\} = (\sigma_{r+1} +\eta)\|v\|_{L_2(\S)}.
\end{equation}
Hence, choosing $r$ such that $\sigma_{r+1}\le \frac{\eta}{\|v\|_{L_2(\S)}}$, $\eta'\le \frac{\eta}{\|v\|_{L_2(\S)}}$, with this variant we take
\begin{equation}
\label{2nd}
[\cK_0,v;\eta] \coloneqq \cK_0^{r,\eta'} v.
\end{equation}

This strategy is particularly efficient when the singular values $\sigma_k$ decay rapidly. For the Henyey--Greenstein kernel, as illustrated
in Figure \ref{fig:svd-G}, this is the more the case the larger $1-\gamma$.

\section{The routine \texorpdfstring{$[\Tr^{-1},{F};\eta]$}{[T\{-1\},g;eta]}}
\label{sec:5}
The numerical realization of the routine $[\Tr^{-1},\cdot;\cdot]$ is based on solving \emph{fiber problems}
\begin{equation}
\label{fiber}
\Tr_{\s} u \coloneqq \s\cdot \nabla u + \sigma(\s)u = \int_{\S} K(\cdot,\s,\s')v(\cdot,\s')\,\ds' + f \eqqcolon F(\s),\quad \s \in \S,
\end{equation}
for properly selected parameters $\s\in \S$ where $F\in L_2(\D\times \S)$ is given. Achieving a given target accuracy depends on solving each fiber problem
with sufficient accuracy and also on solving sufficiently many of them.

The approximate solution of \eqref{fiber} will be based on the Discontinuous Petrov--Galerkin (DPG) scheme developed and analyzed in
\cite{BDS2017,DS2019} whose main features we briefly recall for the convenience of the reader in Sections \ref{ssec:DPG} and \ref{ssec:apost}. In Section \ref{ssec:lift}, we explain how to use the set of solutions to the fiber problems in order to adaptively build an approximation to $u$ in $L_2(\D\times\S)$ which will be the output of $ [\Tr^{-1},F;\eta]$.
\subsection{A DPG Transport Solver for the fiber problems}\label{ssec:DPG}

We outline the numerical transport solver that is the core constituent of the current realization of $[\Tr^{-1},{F};\eta]$.
We denote  by $\pD_h$, $h>0$ a family of uniformly shape regular partitions of the spatial domain $\D$.
More specifically,
in what follows we always assume that all spatial partitions $\pD_h$ are (possibly local) refinements of a hierarchy
of dyadic partitions of $\D$. These partitions therefore induce dyadic partitions of the boundary $\partial \D$ as well.

While typically $h$ stands for a mesh size parameter in a quasi-uniform mesh, here
 $h$ is a locally varying mesh size function covering local refinements of the above dyadic hierarchy. With a given $\pD_h$ we
associate the skeleton $\skelh$, which however depends strictly speaking on an associated convective direction $\s\in\S$.
In fact, in analogy to \eqref{Gammas}, for a given $\s\in\S$ we define $\partial T_\pm(\s)$ for any given cell $T\in \pD_h$ and set
\begin{equation*}
\skelh= \skelh(\s) \coloneqq \bigcup\,\{\partial T_-(\s), T_+(\s) \mid T\in \pD_h\},
\end{equation*}
suppressing at times the dependence of $\skelh$ on $\s$.
Note that for polyhedral domains
$\Gamma_-(\s)$ remains the same on certain neighborhoods in $\S$.

Following \cite{BDS2017}, the   DPG scheme is based on the \emph{infinite dimensional} mesh-dependent variational formulation
over the trial and test space
\begin{equation*}
\U_{\s}\coloneqq L_2(\D) \times H_{0,\Gamma_-(\s)}(\s;\skelh), \quad \V_{\s}\coloneqq H(\s;\pD_h) = \prod_{T\in\pD_h}H(\s;T),
\end{equation*}
endowed with the norms
\begin{equation}
\label{skeletonnorm}
\|\theta\|_{H_{0,\Gamma_-(\s)}(\s;\skelh)}\coloneqq \inf_{w\in H_{0,\Gamma_-(\s)}(\s;\D):\ \restr{w}{\skelh}=\theta}\|w\|_{H(\s;\D)},\quad
\|v\|_{H(\s;\pD_h)}^2 \coloneqq \sum_{T\in\pD_h} \|v\|_{H(\s;T)}^2,
\end{equation}
where as before $\|v\|_{H(\s;T)}^2 = \|v\|_{L_2(T)}^2 + \|\s\cdot\nabla v\|^2_{L_2(T)}$.
Recall from~\cite{BDS2017} that the introduction of the additional unknown field $\theta\in H_{0,\Gamma_-(\s)}(\s;\skelh)$,
living on the skeleton $\skelh$,  is necessary because
the trace terms encountered in the usual derivation of DG bilinear forms may not exist for general elements in $L_2(\D)$.
 \begin{remark}
\label{rem:consistent}
The spaces $\U_{\s}$, $\V_{\s}$ depend on the directions $\s$ and on  $\pD_h$, and so will the solution $[u(\s),\theta(\s)]$. However, when
the solution component $u(\s)$ is regular enough, i.\,e., $u(\s)\in H_{0,\Gamma_-(\s)}(\s;\D)$, one can show that
$u(\s)$ is the solution of \eqref{fiber} and $\theta(\s)$ is its trace on $\skelh$.
\end{remark}
%

%
Defining
\begin{equation}
\label{bilinear}
b_h(u,\theta,v;{ \s})=\sum_{T\in\pD_h}
\underbrace{ \int_T(\sigma(\s)v- { \s}\cdot\nabla v)u \,\dx + \int_{\partial T}\n\cdot \s \theta v \,\dGamma}_{\eqqcolon b_T(u,\theta,v;\s)},
\end{equation}
and given $F(\s)\in L_2(\D)$, we then wish to find $u(\s)\in L_2(\D)$, $\theta \in H_{0,\Gamma_-(\s)}(\s;\skelh)$ such that
\begin{equation}
\label{DPGform}
b_h(u(\s),\theta(\s),v;{ \s})= \int_\D F(\s)v\,\dx,\quad v\in \V_{\s}= H(\s;\pD_h).
\end{equation}
\begin{remark}
\label{rem:well-posed} It immediately follows from
\cite[Theorem 3.1]{BDS2017} that \eqref{bilinear} is a uniformly stable variational formulation for the transport
equation $\Tr_\s u_\s = F(\s)$, i.\,e., continuity and inf-sup conditions according to Theorem \ref{Th:BNB} hold uniformly in $\s\in \S$ and
in $\pD_h$.
\end{remark}
\paragraph{A fully discrete scheme:}
The discretization of \eqref{DPGform} requires two hierarchies of partitions $\pD_\uh$, $\pD_h$ where the
$\pD_h$ is a refinement of (locally) constant depth) of $\pD_\uh$, i.\,e., $\pD_\uh\prec \pD_h$.
 (In fact, practical experiments usually indicate that depth-0 suffices, i.\,e., $h=\uh$.)
In that sense we can write $\uh=\uh(h)$ and $h=h(\uh)$. Given $\pD_\uh$, $\pD_h$, we fix a polynomial degree $m\in\N$
and consider the finite dimensional trial spaces
\begin{equation}
\label{Uh}
\U^\uh_{ \s} \coloneqq \Big( \prod_{\uT\in\pD_\uh}\PP_{m-1}(\uT)\Big) \times
\restr{\Big(H_{0,\Gamma_-({ \s})}({ \s};\D) \cap
            \prod_{\uT\in\pD_\uh}\PP_m(\uT)\Big)}{\skelh}.
\end{equation}
Note that the second component consists of traces of globally continuous piecewise polynomials of one degree higher than
for the discontinuous bulk-component but evaluated on the skeleton of the (possibly) \emph{finer} mesh $D_h$.

Given the finite dimensional trial space $\U_{\s}^\uh$, it is critical to construct a suitable \emph{test space} that
renders also the finite dimensional corresponding Petrov--Galerkin problem inf-sup stable, ideally with inf-sup constants independent
of the trial and test space dimensions. We follow again \cite{BDS2017} and fix the so called \emph{test search space}
 as
discontinuous piecewise polynomials of one  degree higher on a \emph{subgrid} $\pD_h$ of $\pD_\uh$, namely
\begin{equation}
\label{testsearch}
\hat\V_{\s}^h\coloneqq \prod_{T\in \pD_h}\PP_{m+1}(T).
\end{equation}
The actual \emph{test space} $\V^h_{\s}$ is then defined as the following
$H(\s;\pD_h)$-projection 
to the \emph{test search} space $\hat\V_{\s}^h$
\begin{equation}
\label{trialtotesth}
\V_{\s}^h\coloneqq \big\{\breve t(u,\theta) \in \hat\V_{\s}^h \mid (\breve t(u,\theta), v)_{\V_{\s}} = b_h(u,\theta,v;\s),\,\, v\in \hat\V_{\s}^h
,\, [u,\theta]\in \U^h_{\s}\big\}.
\end{equation}
Since the local test search spaces over each cell $T\in \pD_{\uh}$ have uniformly bounded finite dimension the overall computational work
still remains proportional to the dimension of the trial spaces.


This gives rise to the \emph{Petrov--Galerkin} formulation:
find $[u_\uh(\s),\theta_\uh(\s)]\in \U^{\uh}_{\s}$ such that
\begin{equation}
\label{DPGh}
b_h(u_\uh(\s),\theta_\uh(\s),v_h;\s) = \int_\D F(\s)v \,\dx \eqqcolon F(\s)(v),\quad v\in \V^h_{\s},
\end{equation}
for $\V_{\s}^h$ defined by \eqref{trialtotesth}. Here and below we sometimes use the shorthand notations
$u_\uh = u_{\pD_\uh}, b_h=b_{\pD_h}, U^\uh= U^{\pD_\uh}$.

Before stating the corresponding stability result, we mention a variant where the skeleton component $\theta_\uh(\s)$
is replaced by the globally conforming piecewise polynomial $w_\uh$ in $H_{0,\Gamma_-({ \s})}({ \s};\D) \cap
     \prod_{T\in\pD_\uh}\PP_m(\uT)\Big)\restr{\vphantom{\Big)}}{\skelh}.$
Then the local bilinear forms $b_T(u_\uh(\s),\theta_\uh(\s),v_h;\s)$ from \eqref{bilinear} can be rewritten as
\begin{align}
\label{bK}
b_T(u_\uh ,\theta_\uh ,v_h;\s)& = b_T(u_\uh ,w_\uh ,v_h;\s)
= \int_T(\sigma(\s) v_h -\s\cdot\nabla v_h)u_\uh \,\dx + \int_{\partial T} \n\cdot\s w_\uh v_h \,\mathrm d\Gamma\nonumber\\
&= \int_T \sigma (\s)v_h(u_\uh -w_\uh)+ \partial_\s v_h(w_\uh-u_\uh) + (\sigma w_\uh+\partial_\s w_\uh)v_h \,\dx,\quad T\in \pD_h.
\end{align}
Using $[u_\uh, w_\uh]$ as unknowns one obviously has
$
\|w\|_{H_{0,\Gamma_-(\s)}(\s;\partial \pD_h)} \le \|w\|_{H(\s;\D)}$.
We will adopt this variant in what follows where it is now understood to use the norm
\begin{equation}
\label{now}
\|[u_\uh, w_\uh]\|_{\U_\s}^2 \coloneqq \|u_\uh\|_{L_2(\D)}^2 + \|w_\uh\|_{H(\s;\D)}^2.
\end{equation}
The following facts are immediate consequences of
the results in \cite{BDS2017,DS2019}.
\begin{theorem}
\label{thm:DPGh}
For a fixed but sufficiently large subgrid-depth $\uh/h$, (depending on the shape parameters of the involved partitions) the scheme
\eqref{DPGh} is uniformly in $h\ge 0$, $\s\in \S$, inf-sup stable, i.\,e.,
\begin{equation}
\label{inf-suph}
\inf_{[u_\uh,w_\uh]\in \U^h_{\s}}\sup_{v_h\in \V^h_{\s}}\frac{b_h(u_\uh,w_\uh,v_h;\s)}{\|[u_\uh,w_\uh]\|_{\U_{\s}} \|v_h\|_{\V_{\s}}}\ge \bar\beta >0,\quad h\ge 0, \s\in \S,
\end{equation}
where $\bar\beta$ depends on the shape parameters of the underlying partitions, on $\|\Tr_{\s}^{-1}\|_{\cL(L_2(\D),H_{0,\Gamma_-(\s)}(\s;\D))}$
and on $\|\sigma\|_{L_\infty(\S,W^1(L_\infty(\D)))}$.
\end{theorem}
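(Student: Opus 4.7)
My plan is to deduce the discrete inf-sup bound \eqref{inf-suph} from the infinite-dimensional inf-sup condition (Remark~\ref{rem:well-posed}) by producing a uniformly bounded \emph{Fortin operator} into the test search space, in the spirit of the general DPG analysis of \cite{BDS2017,DS2019}. Concretely, I would construct a linear operator $\Pi_h\colon \V_\s \to \hat\V^h_\s$ satisfying (F1) $b_h(u_\uh,w_\uh,v-\Pi_h v;\s)=0$ for all $[u_\uh,w_\uh]\in \U^\uh_\s$ and all $v\in \V_\s$, and (F2) $\|\Pi_h v\|_{\V_\s}\le C_\Pi \|v\|_{\V_\s}$ with a constant $C_\Pi$ independent of $h$ and of $\s\in\S$. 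A standard argument then yields $\bar\beta \ge \bar\beta_{\mathrm{cont}}/C_\Pi$, where $\bar\beta_{\mathrm{cont}}$ is the continuous inf-sup constant of \eqref{DPGform}, which in turn is controlled by $\|\Tr_\s^{-1}\|$ uniformly in $\s$ via the norm equivalences of Theorem~\ref{thm:NE}.

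The construction of $\Pi_h$ would be carried out locally on each fine cell $T\in \pD_h$ by projecting into $\PP_{m+1}(T)$ so that three families of moments of $v-\Pi_h v$ are annihilated: (i) moments against $\PP_{m-1}(T)$, corresponding to the $L_2$ part of the trial space $\U^\uh_\s$; (ii) moments of $\s\cdot\nabla(v-\Pi_h v)$ against the same polynomials, needed because of the streaming term $-\int_T \s\cdot\nabla v\, u\,\dx$ in \eqref{bilinear}; and (iii) face moments on each portion of $\skelh$ against traces of the skeleton component of $\U^\uh_\s$, namely polynomials of degree $m$ on the subdivided faces. A dimension count on the reference element shows that $\dim\PP_{m+1}(T)$ suffices to impose all of these conditions simultaneously provided the subgrid depth $\uh/h$ is chosen large enough, depending only on the shape regularity of the partitions; this is exactly where the hypothesis on the subgrid depth enters.

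Stability (F2) would then follow from a standard reference-element argument: on the reference cell the projection is well-defined and bounded by equivalence of norms on finite-dimensional spaces, and on physical cells one transfers the estimate via shape regularity, using that the $H(\s;T)$ norm is equivalent to the $H^1(T)$ norm with constants depending only on $\|\s\|=1$ and $\sigma_{\max}$. The dependence on $\|\sigma\|_{L_\infty(\S;W^1_\infty(\D))}$ enters through the coefficient of $v$ in \eqref{bilinear}, for which the Fortin projection must absorb a multiplication by $\sigma(\cdot,\s)$ while still mapping into $\PP_{m+1}(T)$; this is handled by commuting $\sigma$ past $\Pi_h$ up to a remainder controlled by $\|\nabla_{\!x}\sigma\|_{L_\infty}$.

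\textbf{Main obstacle.} The delicate point is the uniformity in $\s\in\S$ of both properties (F1) and (F2). The issue is that the \emph{active inflow portion} $\Gamma_-(\s)$ and hence the set of faces in $\skelh(\s)$ that carry genuine skeleton degrees of freedom varies with $\s$, and the boundary integrals in \eqref{bilinear} degenerate as $\s\cdot\n$ crosses zero. I would handle this by choosing the subgrid depth so that on every coarse cell there are enough interior fine faces to accommodate the face moments \emph{simultaneously} for all $\s\in\S$, and by absorbing the $\s\cdot\n$ weights into the trace norm \eqref{skeletonnorm}; uniformity in $\s$ then reduces, as in \cite{BDS2017,DS2019}, to uniform boundedness of the lifting $H_{0,\Gamma_-(\s)}(\s;\skelh)\hookrightarrow H_{0,\Gamma_-(\s)}(\s;\D)$, which itself follows from the norm equivalence in Theorem~\ref{thm:NE} with constants depending only on $\sigma_{\min},\sigma_{\max}$ and $\hat\ell$.
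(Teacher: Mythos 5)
The paper itself gives no proof of this theorem; it is stated, together with Theorem~\ref{thm:apost}, as an immediate consequence of the results in \cite{BDS2017,DS2019}, and the Fortin-operator mechanism you sketch is precisely what those references use, so the overall strategy coincides with what the paper defers to.

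There is, however, a genuine gap in your justification of the boundedness property (F2). You propose to transfer the reference-element estimate to physical cells ``using that the $H(\s;T)$ norm is equivalent to the $H^1(T)$ norm with constants depending only on $\|\s\|=1$ and $\sigma_{\max}$.'' That equivalence is false: the graph norm $\|v\|_{H(\s;T)}^2=\|v\|_{L_2(T)}^2+\|\s\cdot\nabla v\|_{L_2(T)}^2$ controls only the derivative along $\s$, so $H^1(T)\subsetneq H(\s;T)$ with a strict, non-uniform embedding; a function in $H(\s;T)$ can be arbitrarily rough transverse to $\s$. Even restricted to $\PP_{m+1}(T)$, where the two expressions are both norms, the best one can get in the missing direction is an inverse estimate producing a factor of order $h_T^{-1}$, which destroys the $h$-uniformity that \eqref{inf-suph} demands. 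This is not a cosmetic shortcut: proving $\|\Pi_h v\|_{\V_\s}\le C_\Pi\|v\|_{\V_\s}$ in the \emph{anisotropic} broken graph norm, uniformly in $h$ and in $\s$, is exactly the delicate part of the Fortin analysis in \cite{BDS2017}, and it cannot be dispatched by a generic equivalence-of-norms argument on a shape-regular reference cell. The operator has to be designed so that $\s\cdot\nabla\Pi_h v$ is estimated directly from $\s\cdot\nabla v$ rather than from the full gradient, which interacts nontrivially with the moment conditions (F1) and with the subgrid-depth hypothesis. Your identification of the bulk and trace moments to annihilate, the role of the subgrid, the dependence on $\|\sigma\|_{W^1_\infty}$ via commutation, and the discussion of the $\s$-dependent inflow boundary are all in the right spirit, but the stability step needs the anisotropy-aware argument, not the $H^1$-equivalence you invoke.
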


It is well known that the system matrices arising in \eqref{DPGh} are always \emph{symmetric positive definite} despite the
asymmetric nature of transport equations.

While the conforming formulation \eqref{F1} does not require incorporating boundary conditions on $\Gamma_-$ into the trial space,
the skeleton component requires an adjustment in the DPG formulation. To that end, following \cite[Remark 3.6]{BDS2017},
let $w_0(\s)\in H(\s;\D)$ satisfy $w_0(\s)= g(\s)$ on $\Gamma_-(\s)$. Then, the (infinite-dimensional) DPG formulation
of the problem
$\Tr_\s \bar u = f - \Tr_\s w_0$, in $\D$,
$\bar u = 0\ \text{in $\Gamma_-(\s)$}$,
is given by
 \begin{equation}
 \label{bc}
 b_h(\bar u (\s), \bar w (\s),v ;\s) = \langle f,v \rangle - b_h(\bar w_0(\s), \bar w_0(\s),v ;\s) \eqqcolon \langle f- F_b(w_0,\s), v\rangle, \quad v\in \V.
 \end{equation}
Now one has
$
\bar w|_{\partial\pD_h}= \bar u|_{\partial \pD_h} = (u- w_0)|_{\partial\pD_h}$,
i.\,e., it suffices to discretize \eqref{bc}.
\subsection{A Posteriori Error Estimates}\label{ssec:apost}
As an immediate consequence of the fact that the DPG-induced transport operators
$\Tr_{\s,h}$ are norm isomorphisms, uniformly in $h\ge 0, \s\in\S$, 
errors
in $\|\cdot\|_{\U_\s}$ are equivalent to residuals in $\|\cdot\|_{\V_\s'}$, i.\,e.,
\begin{equation}
\label{resAPosteriori}
\bigl\|[u(\s),u(\s)] - [u_\uh(\s),w_\uh(\s)]\bigr\|_{\U_{\s}}
\sim \bigl\| F(\s) - \Tr_{\s,h} ([u_\uh(\s),w_\uh(\s)])\bigr\|_{\V_{\s}'},
\quad h\ge 0,\ \s\in\S,
\end{equation}
holds with uniform constants. Thus, as soon as one can tightly estimate the dual norm
$\| F(\s) - \Tr_{\s,h} ([u_\uh(\s),w_\uh(\s)])\|_{\V_{\s}'}$ of the residual, one obtains
efficient and reliable a posteriori error bounds. Such tight bounds are established in \cite{DS2019} which we briefly
recall.
 Define for $T\in \pD_h$ the Riesz lifts $\breve R_T(u_\uh,w_\uh,\bar F(\s))$ of the local residuals by
\begin{equation}
\label{RT}
\bigl(\breve R_T(u_\uh,w_\uh,\bar F(\s)),v_h\bigr)_{H(\s;T)} = b_T(u_\uh,w_\uh,v_h;\s)- \bar F(\s)(v_h),\quad v_h\in \hat\V^h_{\s},
\end{equation}
where $\bar F(\s)|_T\in \PP_m$ is a piecewise polynomial approximation to $F(\s)$ and where $\hat\V^h_{\s}$ is the same
test search space as used before for the Petrov--Galerkin scheme. Thus, the computational cost per cell $T$ is again uniformly
bounded. Defining then
\begin{equation}
\label{Rh}
\|\breve R_\uh(u_\uh,w_\uh,\bar F(\s))\|^2_{H(\s;\pD_\uh)}= \|\breve R_{\pD_\uh}(u_\uh,w_\uh,\bar F(\s))\|^2_{H(\s;\pD_\uh)}\coloneqq \sum_{T\in \pD_\uh}\|\breve R_T(u_h,w_h,\bar F(\s))\|^2_{H(\s;T)},
\end{equation}
the following holds, see \cite[Theorem 4.1 and (4.4)]{DS2019}.
\begin{theorem}
\label{thm:apost}
If the operators $\Tr_{\s,h}$ are norm isomorphisms uniformly in $h\ge 0$ and $\s\in\S$,
then for a fixed maximal subgrid depth there exist constants $\underline c$, $\bar C$, depending
on $\bar\beta$ from \eqref{inf-suph}, but independent of $\s$, $\pD_h$, such that
\begin{equation}
\begin{aligned}
\label{eq:residualbounds}
\underline c\|\breve R_h(u_\uh,w_\uh,\bar F(\s))\|_{H(\s;\pD_h)}
&\le \|[u(\s),u(\s)|_{\skeluh}] - [u_\uh(\s),w_\uh(\s)]\|_{\U_{\s}} \\
& \le \bar C \|\breve R_h(u_\uh,w_\uh,\bar F(\s))\|_{H(\s;\pD_h)}.
\end{aligned}
\end{equation}
\end{theorem}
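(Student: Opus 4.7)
The plan is to reduce the a posteriori bound to two ingredients: the uniform error-residual equivalence provided by the inf-sup stability of the DPG-induced operator $\Tr_{\s,h}\colon \U_\s\to \V_\s'$, and the equivalence between the full dual residual norm and the assembled Riesz-lift norm computed on the finite test search space $\hat\V^h_\s$. Writing
\begin{equation*}
r_h(v;\s) \coloneqq b_h(u_\uh,w_\uh,v;\s) - F(\s)(v), \qquad v\in \V_\s,
\end{equation*}
the standing uniform inf-sup and continuity hypothesis (Theorem~\ref{thm:DPGh}), coupled with the fact that $[u(\s),u(\s)|_{\skeluh}]$ satisfies $b_h([u(\s),u(\s)|_{\skeluh}],v;\s)=F(\s)(v)$ for all $v\in \V_\s$ (this is the infinite-dimensional DPG formulation, cf.\ Remark~\ref{rem:consistent}), yields the equivalence
\begin{equation*}
\bigl\|[u(\s),u(\s)|_{\skeluh}]-[u_\uh,w_\uh]\bigr\|_{\U_\s} \sim \|r_h(\cdot;\s)\|_{\V_\s'},
\end{equation*}
with constants controlled by $\bar\beta$ from \eqref{inf-suph}. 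So it remains to show $\|r_h(\cdot;\s)\|_{\V_\s'} \sim \|\breve R_\uh(u_\uh,w_\uh,\bar F(\s))\|_{H(\s;\pD_\uh)}$.

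For the lower bound in \eqref{eq:residualbounds}, I would exploit the product structure $\hat\V^h_\s = \prod_{T\in\pD_h}\PP_{m+1}(T)$: because the local Riesz lifts are independent, picking $v_h$ cellwise as the normalized local Riesz representer gives
\begin{equation*}
\|\breve R_\uh(u_\uh,w_\uh,\bar F(\s))\|_{H(\s;\pD_\uh)} = \sup_{v_h\in \hat\V^h_\s\setminus\{0\}} \frac{b_h(u_\uh,w_\uh,v_h;\s)-\bar F(\s)(v_h)}{\|v_h\|_{\V_\s}}.
\end{equation*}
Since $\hat\V^h_\s\subset \V_\s$, the right-hand side is bounded by $\|r_h(\cdot;\s)\|_{\V_\s'}$ up to a data oscillation term $\|F(\s)-\bar F(\s)\|_{\V_\s'}$ which, by the choice $\bar F(\s)\in \prod_T \PP_m(T)$ being a suitable local projection of $F(\s)$, is of higher order relative to the residual and can be absorbed into the constant; combined with the error-residual equivalence this gives $\underline c\|\breve R_\uh\|_{H(\s;\pD_\uh)}\le \|\text{error}\|_{\U_\s}$.

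The upper bound is the substantial direction and rests on a Fortin-type operator $\Pi_h\colon \V_\s\to \hat\V^h_\s$, constructed in \cite{BDS2017,DS2019}, satisfying a commuting projection identity $b_h(u_\uh,w_\uh,v-\Pi_h v;\s)=0$ for all $v\in \V_\s$ and $[u_\uh,w_\uh]\in \U^\uh_\s$, together with a uniform bound $\|\Pi_h v\|_{\V_\s}\lesssim \|v\|_{\V_\s}$ independent of $\s\in\S$ and of $\pD_h$ of the prescribed subgrid depth. Granted such a $\Pi_h$, one has
\begin{equation*}
\|r_h(\cdot;\s)\|_{\V_\s'} = \sup_{v\in\V_\s}\frac{b_h(u_\uh,w_\uh,\Pi_h v;\s) - F(\s)(v)}{\|v\|_{\V_\s}} \lesssim \sup_{v_h\in\hat\V^h_\s}\frac{b_h(u_\uh,w_\uh,v_h;\s)-F(\s)(v_h)}{\|v_h\|_{\V_\s}},
\end{equation*}
which after replacing $F(\s)$ by $\bar F(\s)$ (again controlling the oscillation using the cellwise Cauchy--Schwarz bound and the locally summable $H(\s;T)$ norms) equals $\|\breve R_\uh\|_{H(\s;\pD_\uh)}$ up to constants, completing the chain. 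The main obstacle is precisely the existence and uniform $\V_\s$-boundedness of $\Pi_h$: the construction must be compatible with the direction-dependent trace space on $\skelh(\s)$, must not degrade under the adaptive dyadic refinements permitted for $\pD_\uh\prec \pD_h$, and is the reason a sufficiently large (but fixed) subgrid depth $\uh/h$ enters Theorem~\ref{thm:DPGh}; once $\Pi_h$ is in hand, all remaining steps are bookkeeping on the local Riesz identities.
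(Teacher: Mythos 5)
The paper does not give its own proof of Theorem~\ref{thm:apost}; it is stated with a pointer to~\cite[Theorem 4.1 and (4.4)]{DS2019}, so there is no in-paper argument to match your sketch against. That said, your sketch reproduces the standard route taken in the cited reference and in the DPG a posteriori literature: first, reduce error control to residual control by exploiting that $\Tr_{\s,h}\colon\U_\s\to\V_\s'$ is a uniform norm isomorphism (you cite Theorem~\ref{thm:DPGh}, but the cleaner reference is the hypothesis of Theorem~\ref{thm:apost} itself together with Remark~\ref{rem:well-posed}, since the error--residual equivalence lives at the infinite-dimensional mesh-dependent level); second, bound the full dual residual by the discrete Riesz-lift quantity via a Fortin operator (reliability), and bound the latter by the former via the inclusion $\hat\V^h_\s\subset\V_\s$ (efficiency).

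One step is not airtight as written. In the chain
\begin{equation*}
\|r_h(\cdot;\s)\|_{\V_\s'} = \sup_{v\in\V_\s}\frac{b_h(u_\uh,w_\uh,\Pi_h v;\s) - F(\s)(v)}{\|v\|_{\V_\s}} \lesssim \sup_{v_h\in\hat\V^h_\s}\frac{b_h(u_\uh,w_\uh,v_h;\s)-F(\s)(v_h)}{\|v_h\|_{\V_\s}},
\end{equation*}
the numerator on the left carries $F(\s)(v)$ while the right-hand supremum sees $F(\s)(\Pi_h v)$; the displayed $\lesssim$ therefore hides the term $F(\s)\bigl((\Pi_h - I)v\bigr)$, which does not vanish for free. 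The correct decomposition is
\begin{equation*}
b_h(u_\uh,w_\uh,\Pi_h v;\s) - F(\s)(v) = \bigl[b_h(u_\uh,w_\uh,\Pi_h v;\s) - \bar F(\s)(\Pi_h v)\bigr] + \bar F(\s)\bigl((\Pi_h - I)v\bigr) + \bigl(\bar F(\s)-F(\s)\bigr)(v),
\end{equation*}
and the middle term is killed only because the Fortin operator of~\cite{BDS2017,DS2019} is built to preserve cellwise $\PP_m$ moments in addition to commuting with $b_h$. You mention oscillation control and the delicacy of constructing $\Pi_h$, but do not state the moment-preservation property that makes $\bar F(\s)\bigl((\Pi_h-I)v\bigr)=0$; without it the reliability bound does not close. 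This is a genuine missing ingredient in the write-up, though it is a standard one and is supplied by the cited construction, so the overall architecture of your argument is sound.
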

In the present context it is particularly important to control the dependence of a posteriori bounds on the direction
parameter $\s\in \S$. In this regard, the following further result from \cite[Proposition 4.4]{DS2019} is relevant: there exists a constant
$c_0 > 0$ such that the Petrov--Galerkin solution satisfies for each $T'\in \pD_\uh$
\begin{equation}
\label{newres}
\begin{aligned}
\MoveEqLeft
c_0 \Big(\|u_\uh(\s) - w_\uh(\s)\|^2_{L_2(T')}
         + \|\s\cdot\nabla w_\uh(\s) + \sigma u_\uh(\s)
                                     - \bar F(\s)\|^2_{L_2(T')}\Big) \\
&
\le \sum_{T\in \pD_h, T\subset T'}
       \|\breve R_T(u_\uh,w_\uh,\bar F(\s))\|^2_{H(\s;T)} \\
& \le \|u_\uh(\s) - w_\uh(\s)\|^2_{L_2(T')}
      + \|\s\cdot\nabla w_\uh(\s) + \sigma u_\uh(\s) - \bar F(\s)\|^2_{L_2(T')}.
\end{aligned}
\end{equation}
For $d=2$, i.\,e., $\S$ is the circle we can identify $\s = (\cos t,\sin t)^\top$ and the space $\PP_M(\pS)$ consists for a given admissible
partition $\pS$ of $\S$ of $2\pi$-periodic piecewise polynomials in $t\in (-\pi,\pi]$. Hence, the above error indicators are nearly piecewise
polynomial in $t$ when the components $u_\uh$, $w_\uh$ are of the form \eqref{inputbaru} with $\s$-dependent coefficients in $\PP_M(\pS)$, see
Section \ref{ssec:Alpert}.

The above DPG scheme and the associated a posteriori error bounds form the core constituent of the routine
$[\Tr^{-1},\cdot;\cdot]$. \eqref{eq:residualbounds} can be used to
contrive adaptive mesh refinement strategies based on
so called \emph{Dörfler marking} or \emph{bulk chasing}.
This means one marks those cells for subsequent refinement whose
combined energy exceeds a fixed portion of the total lifted residual. It is shown in \cite{DS2019} that this
entails a fixed error reduction for each refinement sweep and associated complexity estimates.

\begin{remark}
\label{rem:error}
Convergence to zero of either one of the above residual error bounds guarantees convergence of errors in the spaces $U^\s$.
The DPG output has two components, namely a piecewise polynomial $u_h$ of degree $m$ on the underlying mesh $\pD_h$
as well as a skeleton component which can be identified with the trace of a conforming piecewise polynomial of degree $m+1$.
Therefore the
a posteriori error bounds control in particular the convergence of the $u$-component in $L_2(\D)$.
For the realization of $[\Tr^{-1},F;\eta]$ below we always use only the $u$-component for the outer iteration.
\end{remark}

\subsection{An Adaptive Solver in
            \texorpdfstring{$U=L_2(\D\times\S)$}{the Phase Space}}
\label{ssec:lift}

We describe next how $[\Tr^{-1},\cdot;\eta]$ is realized based on approximately solving, with the aid of the DPG scheme
described above, fiber problems
$\Tr_\s \bar u = F$ for the elements $\s$ from a stage-dependent discrete subset $\cQ_\eta$ of the parameter domain $\S$.
Both $\cQ_\eta$ as well as the meshes for each fiber solution are generated adaptively.

\vspace*{-4mm}
\paragraph{The data:}
The data $F= F(x,\s)$ required by each call of $[\Tr^{-1},F;\eta]$ have a piecewise polynomial representation of the type \eqref{Ku}.
Specifically, they are of the form
\begin{equation}
\label{F}
F = w + g\in L_2(\D\times \S),
\end{equation}
where $w$ is the output of the routine $[\cK,\cdot;\cdot]$ and $g$ is a stage-dependent
approximation to the source term. More precisely, in the case of inhomogeneous boundary conditions $g$ consists of two parts,
namely
$g=g_0+ g_1$ where $g_0$ stands for the ``lifted boundary data'' needed to correct the right hand side so as to reduce
the problem to the homogeneous case, see \eqref{bc}. Both $w$ and $g$ need to be computed within the currently given
accuracy tolerance. We omit the details concerning the computation of $g$.
\vspace*{-3mm}
\paragraph{Output format:} The output of $[\Tr^{-1},\cdot;\eta]$ is a piecewise polynomial of degree $m$ of the form (see \eqref{inputbaru})
\begin{equation}
\label{Toutput}
\bar u_\eta(x,\s) = \sum_{T\in \pD_\eta}\sum_{i\in \cI_T} v_{T,i}(\s) \varphi_{T,i}(x),
\end{equation}
where the $\varphi_{T,i}$ are polynomial basis functions of degree $m$ supported in $T\in \pD_\eta$ and $\pD_\eta$ is
a partition of the spatial domain $\D$. The parameter dependent coefficients $v_{T,i}(\s)$ are elements of
a space $\PP_M(\pS_{\eta})$ of piecewise polynomials of degree $M$ subordinate to a partition $\pS_\eta$ of $\S$.
We describe next how to compute the $v_{T,i}(\s)$ as well as the partition $\pD_\eta$.
\vspace*{-3mm}
\paragraph{Computation of fiber solutions:} The realization of $[\Tr^{-1},F;\eta]$ is based on approximately solving
fiber transport problems $\Tr_\s u_\s = F(\cdot,\s)$ for parameters $\s$ in a suitable finite subset of $\S$,
Specifically, given a partition $\pS$ of the parameter domain $\S$,
we associate
with each cell $\cell\in \pS$ a set of ``quadrature points'' $\cQ_\cell$ whose union
\begin{equation}
\label{Qeta}
 \cQ_{\pS} \coloneqq \bigcup_{\cell\in \pS} \cQ_\cell
\end{equation}
is the discrete set of parameters for which we first compute error controlled approximate fiber solutions. Before describing this
in more detail,
a few preparatory comments are in order. The realization of $[\cK,\cdot;\cdot]$ is reduced to a frequent but efficient approximate
application of a global operator acting in functions in $d-1$ variables. The bulk of computation therefore lies in $\#\cQ_{\pS}$
approximate \emph{inversions} of transport boundary value problems in $d$ variables. It is therefore of primary importance
to keep the size of each fiber transport problem as small as possible. In view of the inherently low regularity of the transport solutions
(especially in the presence of rough boundary and source data) we opt for employing an adaptive DPG scheme for each fiber problem.
The price to be paid is that then each fiber solution $\bar u_{\pS}(\cdot,\s)$, $\s\in \cQ_{\pS}$, comes with its own adaptive
partition $\pD_{\s}$, see Figure \ref{fig:s-dependent}.
We refer to \cite{BDS2017,DS2019} for the details on an adaptive fiber transport solver
\begin{align*}
[\Tr_\s^{-1},F;\eta] &\to (\pD_{\s},\bar u_\s), &
\bar u_\s(x) &= \sum_{T\in \pD_{\s}, i\in \cI_T}
                c_{T,i,\s}\varphi_{T,i}(x).
\end{align*}
It consists in repeating the standard cycle
\begin{center}
$\textsc{Mark} \quad \rightarrow \quad \textsc{Refine} \quad \rightarrow \quad \textsc{Solve}$
\end{center}
until the sum of squared indicators (in either \eqref{eq:residualbounds} or \eqref{newres}) is below the current
threshold $\eta^2$.
Here one needs for each $\cell\in \pS$ a good initial guess. If $\cell\in\pS$ was already obtained in the representation
of the final DPG solution of the previous outer iteration we choose this one. Otherwise one can take the union of those
fiber meshes associated with those parameter cells from the preceding outer iteration that intersect the current parameter cell.

\dw{For $\textsc{Mark}$ we use a simple bulk criterion identifying for each selected quadrature point $\s$ a possibly small set of cells in the current partition such that the sum of the corresponding squared indicators exceeds  a fixed portion of the full sum of squared indicators.
Hence, the adaptively generated meshes depend on the directions $\s$.}  However, the approximate application of the scattering kernel in
$[\cK,\cdot;\cdot]$ requires an \emph{aggregated} approximate solution
$\bar u(x,\s)$ as a function of the spatial and parametric variables which needs to be represented on a single mesh that
is obtained by \emph{merging} the parameter-dependent fiber meshes.
Note that even the merged mesh involves a total number of degrees of freedom which is significantly smaller than
that corresponding to a uniform mesh with the highest required resolution, see the rightmost picture in Figure \ref{fig:s-dependent}.

A more detailed algorithmic description is beyond the present scope and can be found
in \cite[Section 6.3.2]{Gruber2018}.
\vspace*{-3mm}
\paragraph{Aggregating fiber Solutions:} We discuss first how to generate an approximate solution $\bar u_{\pS}\in L_2(\D\times\S)$
which is only based on approximate fiber solutions for $\s\in \cQ_{\pS}$ where at this point $\pS$ is a \emph{given} partition of $\pS$, e.\,g.\ generated
by an error controlled approximate application of $\cK$.
This can be formulated as
a (preparatory) routine
\begin{align}
\label{prepu}
[\Tr^{-1},F,\pS;\eta] &\to (\pD_{\pS},\bar u_{\pS}), &
\bar u_{\pS}(x,\s) &= \sum_{T\in \pD_{\pS},i\in\cI_T} v_{T,i}(\s)\varphi_{T,i}(x),
\end{align}
that outputs a mesh $\pD_{\pS}$ and a piecewise polynomial $u_{\pS}(x,\s)$ in $x$ subordinate to $\pD_{\pS}$
with parameter dependent coefficients $v_{T,i}\in \PP_M(\pS)$ and a spatial mesh $\pD_{\pS}$ such that
\begin{equation}
\label{resreq}
\|\breve R_{\D_{\pS}}(u_{\pS},\theta_{\pS}, F(\s))\|_{H(\s;\pD_h)}\le \kappa_\Tr \eta, \quad \s \in \cQ_{\pS}.
\end{equation}

The workhorse called by $[\Tr^{-1},F,\pS;\eta]$ is therefore the following subroutine providing a parameter dependent approximate transport solution
over a given cell $\cell$ in the current parameter partition $\pS$:\\

\noindent
$[\cell,F;\eta] \to (\pD_{\cell}, \bar u_{\cell})$
\begin{itemize}
\item[C1:]
For $\s\in \cQ_\cell$ invoke $[\Tr_\s^{-1},F;\eta]$;
\item[C2:]
generate the mesh $\pD_{\cell}$ by merging the meshes $\pD_{\s}$, $\s\in\cQ_\cell$ to obtain merged representations
$\bar u_\s(x)=\sum_{T\in \pD_{\cell}, i\in \cI_T}
\tilde c_{T,i,\s}\varphi_{T,i}(x)$;
\item[C3:]
Determine the polynomial $v_{\cell,T,i}(\s)\in \PP_M(\cell)$ that (quasi-)interpolates the values $\tilde c_{T,i,\s}$, $\s\in \cQ_\cell$
and aggregate
\begin{equation*}
\bar u_{\cell}(x,\s)
\coloneqq \sum_{T\in \pD_{\cell}} \tilde v_{\cell,T,i}(\s)\varphi_{T,i}(x).
\end{equation*}
\end{itemize}

The output in \eqref{prepu} of $[\Tr^{-1},F,\pS;\eta]$ is then given by
\begin{equation*}
\bar u_{\pS}(x,\s)
= \sum_{\cell\in \pS}\bar u_{\cell,\eta}(x,\s)
= \sum_{T\in \pD_{\pS}, i\in \cI_T} v_{T,i}(\s)\varphi_{T,i}(x),
\end{equation*}
where $\D_{\pS}$ is obtained by merging the cell-dependent meshes
$\pD_{\cell}$, $\cell\in \pS$ produced by $[\cell,F;\eta]$.
\vspace*{-3mm}
\paragraph{Finding $\pS_\eta$:}
The accuracy requirement in $[\Tr^{-1},F;\eta]$ requires a mean square control over the parameter domain $\S$.
The output of the routine $[\Tr^{-1},F,\pS;\eta]$ for a given parameter partition $\pS$ guarantees that the residual bounds
satisfy the required accuracy $\eta$ only at the quadrature points $\cQ_{\pS}$
 but a priori not necessarily for all parameter values in $\S$. Our current approach is therefore
to adaptively generate also a further refinement $\pS_\eta$ (if necessary) of some initial partition of $\S$ (dictated solely by
the accuracy in the application of $\cK$). We then apply quadrature with respect to $\cQ_{\pS_\eta}$ to estimate the error in $L_2(\D\times\S)$.
Here we use that by \eqref{newres}, the true errors are rigorously sandwiched by error indicators that are piecewise defined
as products of polynomials and trigonometric functions. Specifically, we apply the following steps:
\begin{itemize}
\item[S1:]
Take the partition $\pS=\pS_{\cK,\kappa_\cK\eta}$ generated by $[\cK,\bar u;\kappa_\cK\eta]$ as initial guess;
\item[S2:]
given a partition $\pS$ of $\S$ compute $\bar u_{\pS}=[\Tr^{-1},F,\pS;\eta]$;
\item[S3:]
subdivide each cell in $\pS$ to obtain a refined partition $\pS_r$;
\item[S4:]
evaluate the residual bounds (e.\,g.\ \eqref{newres}) for the current approximation $\bar u_{\pS}(\cdot,\s)$ at
the new quadrature points $\s\in \cQ_{\pS_r}\setminus \cQ_{\pS}$ and mark all cells $\cell\in \pS_r$
containing a quadrature point for which a fixed threshold $\omega\eta$ ($\omega\le 1$ fixed) is exceeded.
If no cell is marked stop and set $\pS \to \pS_\eta$;
\item[S5:]
the parents in $\pS$ of the marked cells are refined to generate a refined partition $\pS_{\rm new}$ of $\pS$;
\item[S6:]
replace $\pS$ by $\pS_{\rm new}$ and go to S2.
\end{itemize}

\begin{figure}
\includegraphics[width=.32\textwidth]
                {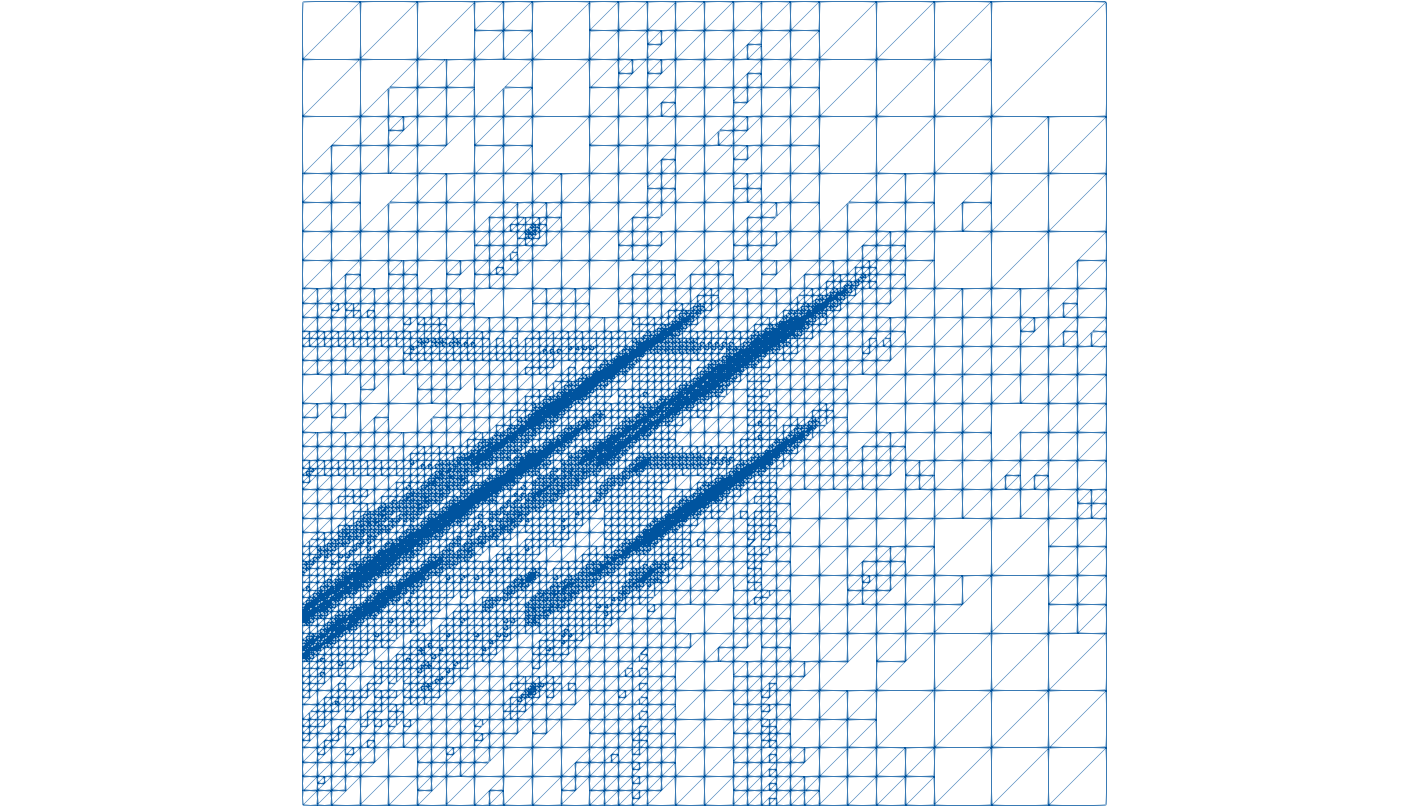}
\includegraphics[width=.32\textwidth]
                {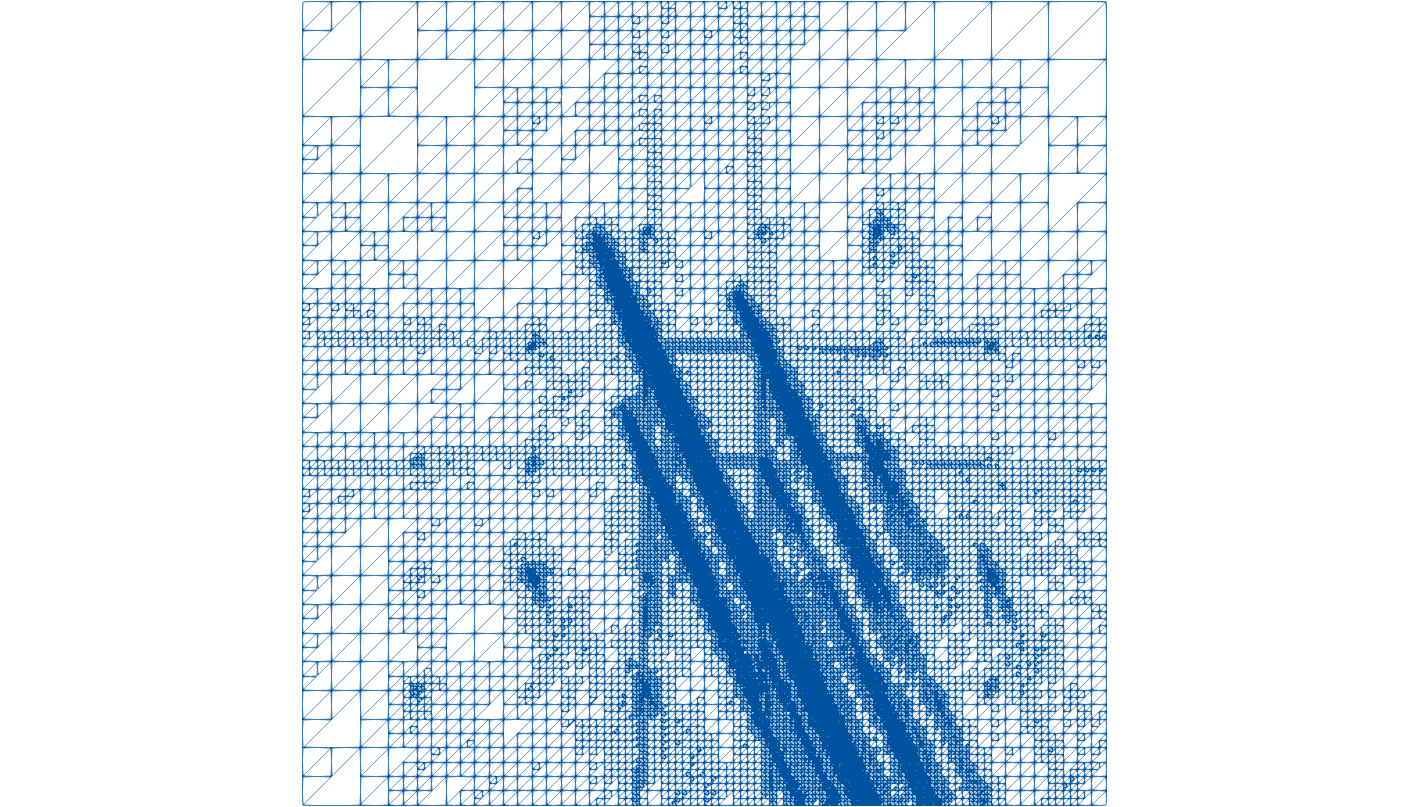}
\includegraphics[width=.32\textwidth]
                {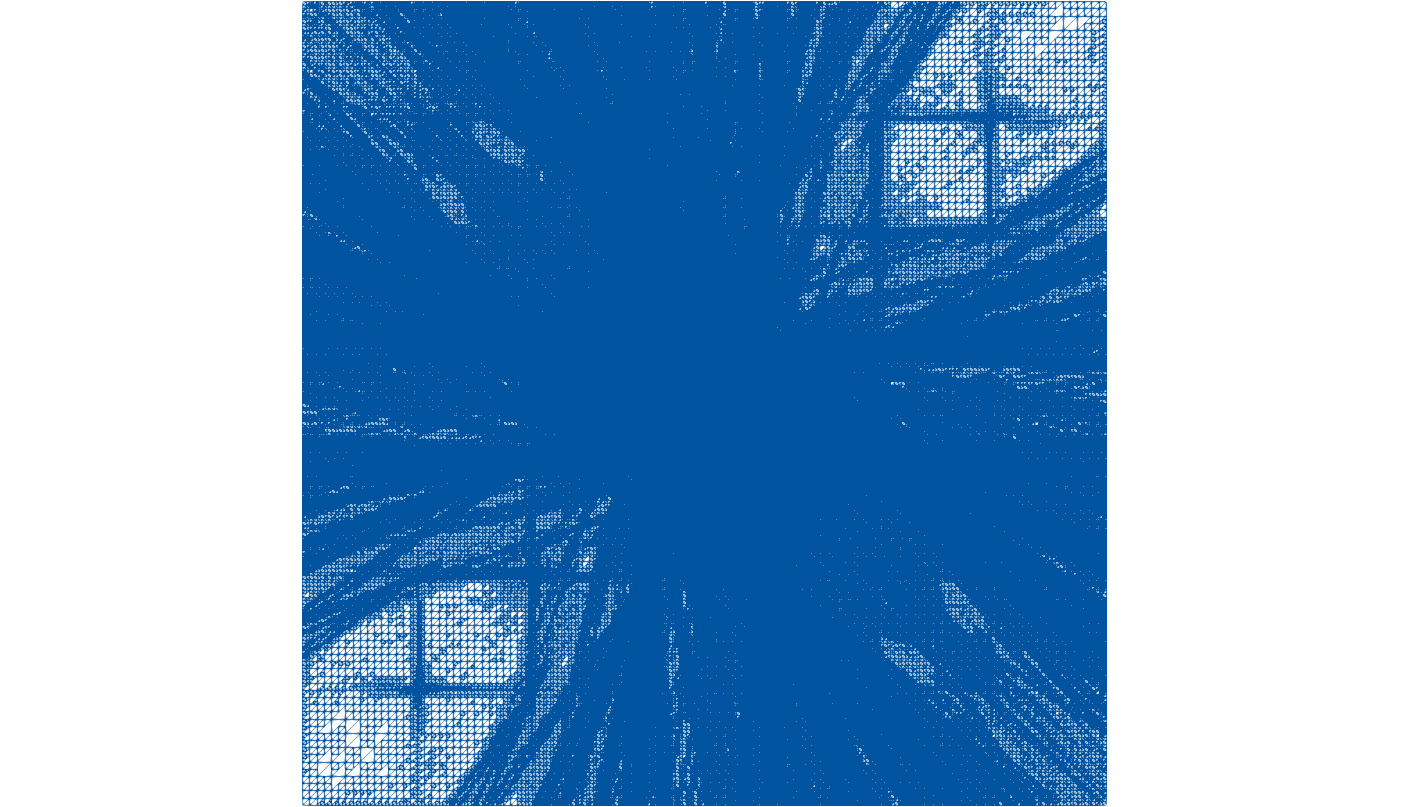}
\caption{Adaptive meshes for fiber transport solutions with respect to two different directions as well as the merged mesh at iteration step 10.}
\label{fig:s-dependent}
\end{figure}

\vspace*{-3mm}
\section{Numerical Experiments}\label{sec:numres}

We consider the radiative transfer problem \eqref{eq::radTrans} on the unit square domain $\D=[0,1]^2$
with homogeneous boundary conditions. The structure of the source term $f$ and absorption coefficient $\sigma$
is illustrated by Figure \ref{fig:checkerboard}. More precisely, we take $f=0$ in the white and gray areas whereas $f=1$
in the black area. Similarly, we set $\sigma=10$ in the gray areas and $\sigma=2$ everywhere else.
Such checkerboard structure serves as a classical benchmark in the literature of radiative transfer
and can be found in other works, see e.\,g., \cite{Brunner2002}.

The scattering is of Henyey--Greenstein type
(see
formula~\eqref{eq:henyey})
\begin{equation}
K(x,\s,\s') = \kk(\s,\s')= \frac{1}{2\pi}\frac{1-\gamma^2}{1+\gamma^2-2\gamma \s\cdot\s'}, \quad \forall x \in \D.
\label{eq:henyey-comp}
\end{equation}
\vspace*{-5mm}
\begin{figure}[h]
\centering
\begin{minipage}{.4\textwidth}
  \centering
 \begin{tikzpicture}[scale=0.64]
\coordinate (P1) at (0cm,0cm); 
\coordinate (P2) at (7cm,0cm); 
\coordinate (P3) at (7cm,7cm); 
\coordinate (P4) at (0cm,7cm); 

\draw (P1) -- (P2) -- (P3) -- (P4) -- cycle; 

\foreach \i in {1,2,...,5}
{
  \foreach \j in {1,2,...,5}
  {
    \coordinate (C1) at (\i cm, \j cm);
    \coordinate (C2) at (\dimexpr\i cm +1cm, \j cm);
    \coordinate (C3) at (\dimexpr\i cm +1cm, \dimexpr\j cm+1cm);
    \coordinate (C4) at (\i cm, \dimexpr\j cm+1cm);
    \ifthenelse{\intcalcMod{\i+\j}{2}=0 \AND \NOT \(\j=5 \AND \i=3\)
                \AND \NOT \(\j=3 \AND \i=3\)}{
      \draw[fill=gray] (C1) -- (C2) -- (C3) -- (C4) -- cycle;
    }{}
  }
}
\draw[fill=black] (3cm, 3cm) -- (4cm,3cm) -- (4cm,4cm) -- (3cm,4cm) -- cycle;
\end{tikzpicture}
\caption{Geometry of the checkerboard benchmark.}
\label{fig:checkerboard}
\end{minipage}%
\quad
\begin{minipage}{.4\textwidth}
  \centering
\includegraphics[height=5cm]{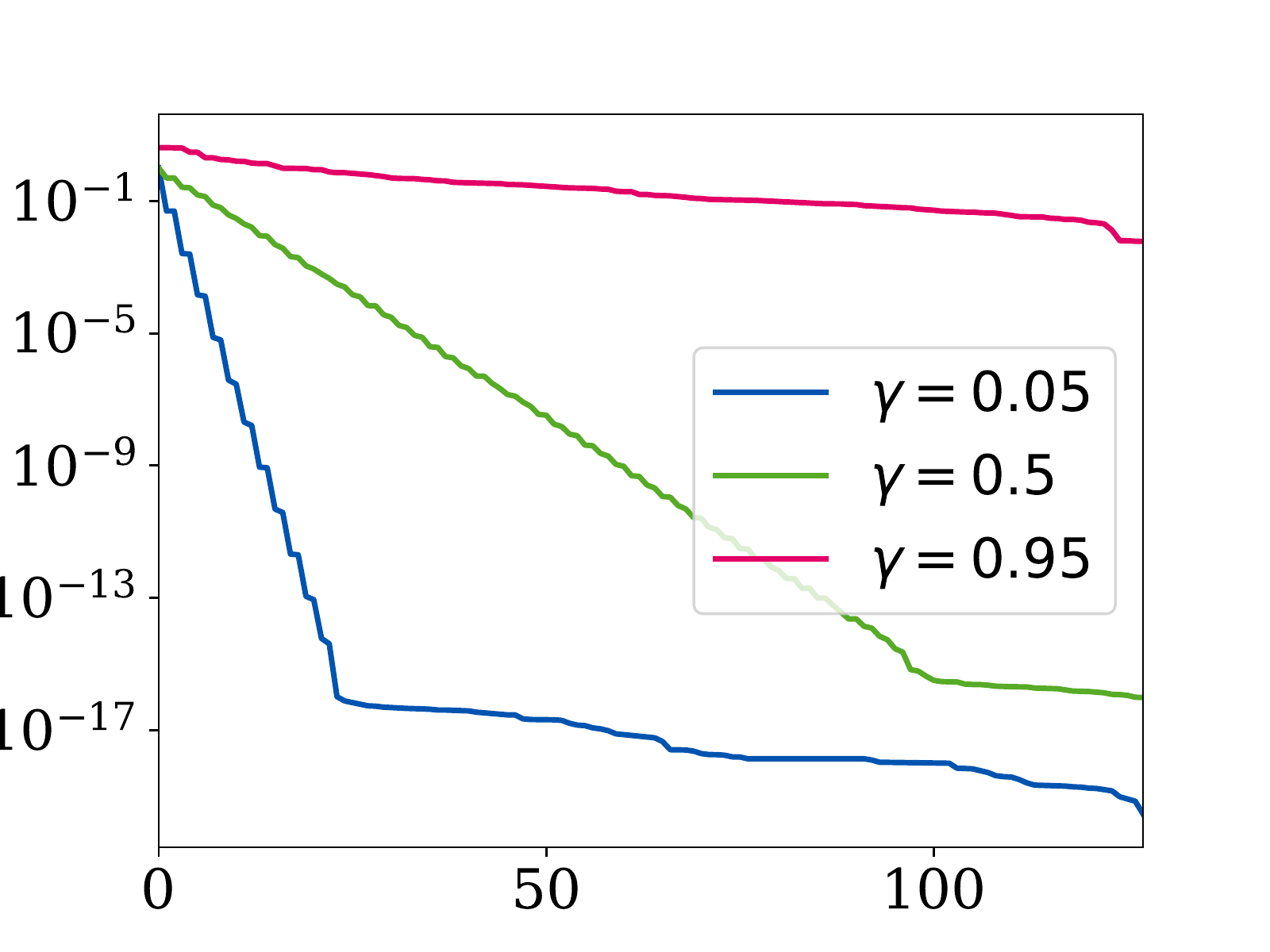}
\caption{SVD of the matrix representation $\bkk^{\Xi}$,
$\Xi\in \{\Psi,\Phi\}$ of $\kk$ for different values of $\gamma$. }
\label{fig:svd-G}
\end{minipage}
\end{figure}

\vspace*{-3mm}
Figure~\ref{fig:svd-G} shows the decay of singular values of a highly accurate
matrix representation $\bkk^{\Xi}$, $\Xi\in \{\Psi,\Phi\}$, of the scattering kernel $\kk$ for different values of $\gamma$.
For $\gamma$ close to one this decay is very slow but Figure \ref{fig:finger} reveals that the wavelet representation
is nevertheless extremely sparse.
Here we confine the subsequent discussion
to moderately isotropic scattering $\gamma = 0.5$. The singular values still decay rapidly (see Figure~\ref{fig:svd-G})
which allows us to apply the method outlined in Section \ref{ssec:HilbertSchmidt} based on Hilbert--Schmidt decompositions.
We present results with Alpert wavelets of degree 2.

We set $\e = 1.1\cdot 5.10^{-3}$ as the final target accuracy. The problem is of transport-dominated nature ($\rho\leq 1$) so we can solve it with the \ASTI~algorithm. Table \ref{tab:values-cts} gives the estimated values $C_{\Tr}, \rho, b_0(u)$ and $\kappa_1,\kappa_2,\kappa_3$. Note that $\kappa_2=0$ since we can evaluate the source term exactly. The remaining two parameters $\kappa_1$ and $\kappa_3$ balance the accuracy tolerances for the approximate application of
the scattering operator and the approximate inversion of $\Tr$. Specifically, $\kappa_1$ determines on the one hand the number of quadrature points
and hence the number of fiber transport problems to be solved and, on the other hand, $\kappa_3$ affects the spatial discretizations of these fiber problems.

\begin{table}[h]
\begin{center}
\begin{tabular}{ | c | c | c | c | c | c |}
\hline
$C_{\Tr}$  & $\rho$  & $\ub[0]$ & $\kappa_1$ & $\kappa_2$ & $\kappa_3$ \\
\hline
0.594604  & 0.594604 & 1/7 & $0.2 / C_\Tr$ & 0 & $0.8$ \\
\hline
\end{tabular}
\end{center}
\vspace{-0.2cm}
\caption{Values of the constants required to run the \ASTI~Algorithm \ref{alg:asti}.}
\label{tab:values-cts}
\end{table}

Figure~\ref{fig:checkerboardConvergence}, displays the convergence history and degrees of freedom for the above choice of parameters.
The left plot gives an approximation error of the scattering application $|| \cK(\bar u_n) - [\cK, \bar u_n; \kappa_1 \eta_n] ||_{L_2(\D\times \S)}$
(dark blue curve), the a posteriori error of the transport solves $|| u_n - \bar u_n ||_{L_2(\D\times\S)}$ (light blue curve), and
a bound for the global error $|| u - \bar u_n ||_{L_2(\D\times\S)}$ (purple curve) based on \eqref{purple}.
Recall that it is composed of the bounds for $\rho^n\|u\|_\U$ and the the above two error tolerances. By the definition \eqref{etan} of the
tolerances $\eta_n$, the interior solution accuracies need to be somewhat finer which explains
the gradual divergence between the global error bound and the interior error tolerances.
To avoid this would require total a posteriori bounds based on the bilinear form $b(w,v)= ((\Tr-\cK)(w))(v)$
in combination with coarsening strategies, which is the subject of future work. The shaded blue regions in the right plot
 indicate
 statistics about the number of degrees of freedom that are associated   for each selected angular direction.

\newcommand{\checkerboardconvplots}[1]{%
\begin{subfigure}[h]{\textwidth}
\hspace*{0.7cm}\includegraphics[width=.42\textwidth]
        {{figures/checkerboard_#1-conv}.pdf}
\hspace*{1.1cm}
\includegraphics[width=.42\textwidth]
        {{figures/checkerboard_#1-num-dofs}.pdf}
\label{fig:checkerboardConvergence_#1}
\end{subfigure}
}

\begin{figure}[h]
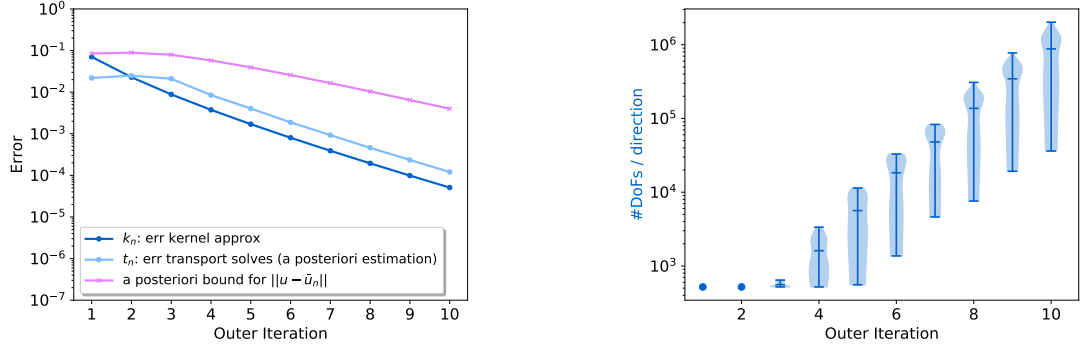

\centering
\checkerboardconvplots{0.2}
\caption{Convergence and number of DoFs for
         $\kappa_1 = \xi / C_\Tr$,
         $\kappa_2 = 0$,
         $\kappa_3 = (1-\xi) / 2$ with $\xi=0.2$.}
\label{fig:checkerboardConvergence}
\end{figure}

The table below gives the precise values of the a posteriori error and the total degrees of freedom:
\begin{center}
\begin{tabular}{r|l|r}
iteration & a posteriori error & \#DoFs \\
\hline
1 & 0.0850598 & 6228 \\
2 & 0.0891398 & 12456 \\
3 & 0.079258 & 13392 \\
4 & 0.0578653 & 38664 \\
5 & 0.039463 & 135236 \\
6 & 0.0258249 & 440648 \\
7 & 0.0165168 & 1151102 \\
8 & 0.010397 & 6586094 \\
9 & 0.00647563 & 16570210 \\
10 & 0.00400132 & 42179602 \\
\end{tabular}

\end{center}

Figure~\ref{fig:transport-solutions} shows solutions $\bar u_n(\cdot,\s)$ with their corresponding grids for the final iterate once the accuracy $\e$ has been reached. Finally, Figure~\ref{fig:checkerboardIntegratedSolutions} shows the final averaged densities $\int_\S \bar u_n(\cdot,\s) \,\ds$. They are computed on the merged grids.

We note that no special \emph{structure preserving} measures had to be imposed on the numerical schemes to produce physically meaningful results.

\begin{remark}
The code to reproduce the numerical part of this article is available online at:
\begin{center}
\url{https://gitlab.dune-project.org/felix.gruber/dune-dpg}
\end{center}
The implementation makes use of \textsc{Dune-DPG} 0.4.2, a C++ based library which is built upon the
multi-purpose finite element package \textsc{DUNE} \cite{dune2.4}.
Details of the \textsc{Dune-DPG} library can be found in \cite{GKM2017,Gruber2018}.
\end{remark}

\newcommand{\checkerboardsolutionplot}[1]{%
\begin{subfigure}{\textwidth}
\includegraphics[width=.41\textwidth]
        {figures/{checkerboard_integrated_n#1_0.2}.png}
~
\includegraphics[width=.41\textwidth]
        {figures/{checkerboard_integrated_n#1_0.2_grid}.png}
\caption{integrated solution and grid for iteration step #1.}
\label{fig:checkerboardIntegratedSolutions_#1_0.2}
\end{subfigure}%
}

\begin{figure}
\centering
\checkerboardsolutionplot{2}
\checkerboardsolutionplot{6}
\checkerboardsolutionplot{8}
\checkerboardsolutionplot{10}
\caption{Integrated solutions $\int_\S u_n(\cdot,\s) \,\ds$ and corresponding merged grids.}
\label{fig:checkerboardIntegratedSolutions}
\end{figure}

\begin{figure}
\centering
\includegraphics[width=.41\textwidth]
                {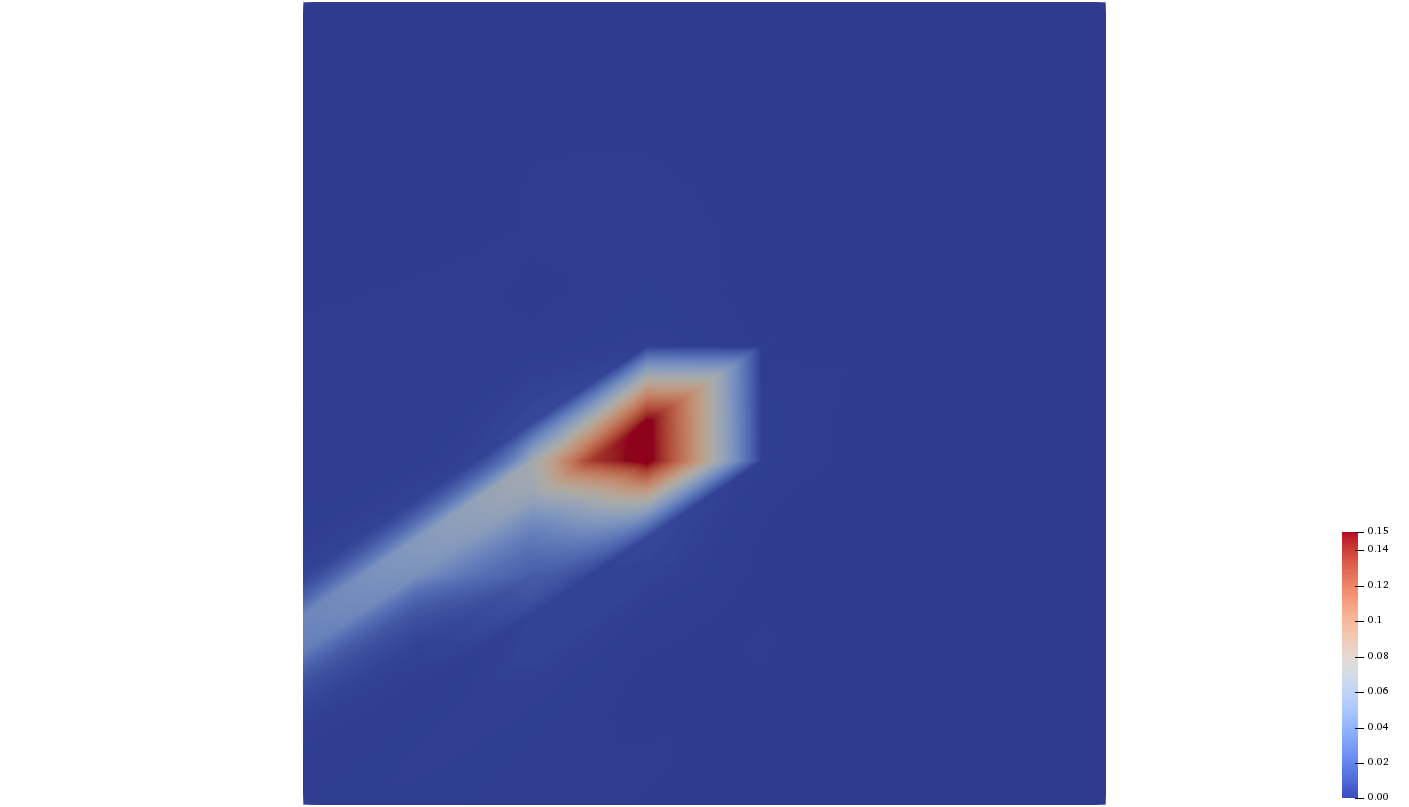} %
\includegraphics[width=.41\textwidth]
                {figures/{checkerboard_solution_n10_s5_0.2_grid}.png} \\[5mm]
\includegraphics[width=.41\textwidth]
                {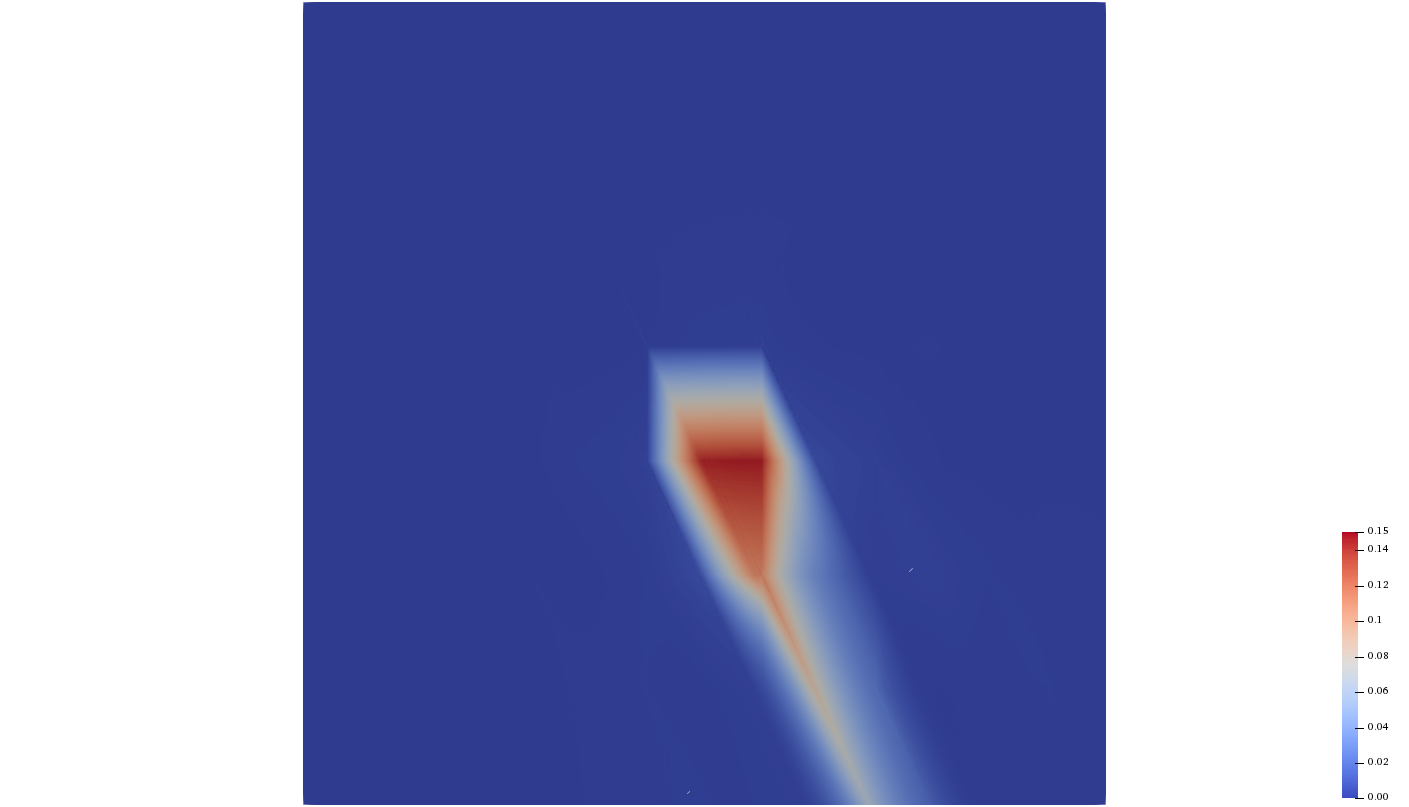} %
\includegraphics[width=.41\textwidth]
                {figures/{checkerboard_solution_n10_s16_0.2_grid}.png}
\caption{Solutions $\bar u_n(\cdot,\s)$ for different directions $\s$ in final outer iterate.}
\label{fig:transport-solutions}
\end{figure}

\newpage
\begin{small}
\renewcommand{\s}{\char"FF} 
\bibliographystyle{bibstyle}
\bibliography{literature}

\begin{thebibliography}{10}
\providecommand{\url}[1]{\texttt{#1}}
\providecommand{\urlprefix}{URL }

\bibitem{Alpert1993}
Alpert, B.~K.
\newblock A Class of Bases in $L^2$ for the Sparse Representation of Integral
  Operators.
\newblock \emph{SIAM Journal on Mathematical Analysis}, 24(1), pp. 246--262,
  1993.
\newblock \urlprefix\url{http://dx.doi.org/10.1137/0524016}.

\bibitem{Asadzadeh1988}
Asadzadeh, M.
\newblock $L_2$-Error Estimates for the Discrete Ordinates Method for
  Three-dimensional Neutron Transport.
\newblock \emph{Transport Theory and Statistical Physics}, 17(1), pp. 1--24,
  1988.
\newblock \urlprefix\url{http://dx.doi.org/10.1080/00411458808230852}.

\bibitem{Asadzadeh1998}
Asadzadeh, M.
\newblock A Finite Element Method for the Neutron Transport Equation in an
  Infinite Cylindrical Domain.
\newblock \emph{SIAM {J}ournal on {N}umerical {A}nalysis}, 35(4), pp.
  1299--1314, August 1998.
\newblock \urlprefix\url{http://dx.doi.org/10.1137/S0036142992238119}.

\bibitem{ACP2011}
Avila, M., Codina, R. and Principe, J.
\newblock Spatial Approximation of the Radiation Transport Equation Using a
  Subgrid-scale Finite Element Method.
\newblock \emph{Computer Methods in Applied Mechanics and Engineering},
  200(5–8), pp. 425--438, 2011.
\newblock \urlprefix\url{http://dx.doi.org/10.1016/j.cma.2010.11.003}.

\bibitem{Bal2009}
Bal, G.
\newblock Inverse Transport Theory and Applications.
\newblock \emph{Inverse Problems}, 25(5), 2009.
\newblock \urlprefix\url{http://dx.doi.org/10.1088/0266-5611/25/5/053001}.

\bibitem{dune2.4}
Blatt, M., Burchardt, A., Dedner, A., Engwer, C., Fahlke, J., Flemisch, B.,
  Gersbacher, C., Gräser, C., Gruber, F., Grüninger, C., Kempf, D.,
  Klöfkorn, R., Malkmus, T., Müthing, S., Nolte, M., Piatkowski, M. and
  Sander, O.
\newblock The {D}istributed and {U}nified {N}umerics {E}nvironment, Version
  2.4.
\newblock \emph{Archive of Numerical Software}, 4(100), pp. 13--29, May 2016.
\newblock \urlprefix\url{http://dx.doi.org/10.11588/ans.2016.100.26526}.

\bibitem{BDS2017}
Broersen, D., Dahmen, W. and Stevenson, R.~P.
\newblock On the Stability of {DPG} Formulations of Transport Equations.
\newblock \emph{Mathematics of Computation}, 2017.
\newblock \urlprefix\url{http://dx.doi.org/10.1090/mcom/3242}.

\bibitem{Brunner2002}
Brunner, T.~A.
\newblock Forms of Approximate Radiation Transport.
\newblock Sandia Report SAND2002-1778, Sandia National Laboratories, July 2002.
\newblock \urlprefix\url{http://dx.doi.org/10.2172/800993}.

\bibitem{CDD2001}
Cohen, A., Dahmen, W. and DeVore, R.
\newblock Adaptive Wavelet Methods for Elliptic Operator Equations: Convergence
  Rates.
\newblock \emph{Mathematics of Computation}, 70(233), pp. 27--75, 2001.
\newblock \urlprefix\url{http://dx.doi.org/10.1090/S0025-5718-00-01252-7}.

\bibitem{CDD2002}
Cohen, A., Dahmen, W. and DeVore, R.
\newblock Adaptive Wavelet Methods {II}---Beyond the Elliptic Case.
\newblock \emph{Foundations of Computational Mathematics}, 2(3), pp. 203--202,
  August 2002.
\newblock \urlprefix\url{http://dx.doi.org/10.1007/s102080010027}.

\bibitem{DHS2006}
Dahmen, W., Harbrecht, H. and Schneider, R.
\newblock Compression Techniques for Boundary Integral
  Equations---Asymptotically Optimal Complexity Estimates.
\newblock \emph{SIAM Journal on Numerical Analysis}, 43(6), pp. 2251--2271,
  2006.
\newblock \urlprefix\url{http://dx.doi.org/10.1137/S0036142903428852}.

\bibitem{DHSW2012}
Dahmen, W., Huang, C., Schwab, C. and Welper, G.
\newblock Adaptive {P}etrov--{G}alerkin Methods for First Order Transport
  Equations.
\newblock \emph{SIAM Journal on Numerical Analysis}, 50(5), pp. 2420--2445,
  2012.
\newblock \urlprefix\url{http://dx.doi.org/10.1137/110823158}.

\bibitem{DS2019}
Dahmen, W. and Stevenson, R.~P.
\newblock Adaptive Strategies for Transport Equations.
\newblock \emph{Comput. Meth. in Appl. Math.}, 19(3), pp. 431--464, 2019.
\newblock \urlprefix\url{http://dx.doi.org/10.1515/cmam-2018-0230}.

\bibitem{DL1993b}
Dautray, R. and Lions, J.-L.
\newblock \emph{Evolution Problems {II}}, volume~6 of \emph{Mathematical
  Analysis and Numerical Methods for Science and Technology}.
\newblock Springer, 1993.
\newblock \urlprefix\url{http://dx.doi.org/10.1007/978-3-642-58004-8}.

\bibitem{ES2012}
Egger, H. and Schlottbom, M.
\newblock A Mixed Variational Framework for the Radiative Transfer Equation.
\newblock \emph{Math. Mod. Meth. Appl.}, 22(03), 2012.
\newblock \urlprefix\url{http://dx.doi.org/10.1142/S021820251150014X}.

\bibitem{ES2014}
Egger, H. and Schlottbom, M.
\newblock An $L^p$ Theory for Stationary Radiative Transfer.
\newblock \emph{Applicable Analysis}, 93(6), pp. 1283--1296, April 2014.
\newblock \urlprefix\url{http://dx.doi.org/10.1080/00036811.2013.826798}.

\bibitem{GS2011b}
Grella, K. and Schwab, C.
\newblock Sparse Discrete Ordinates Method in Radiative Transfer.
\newblock \emph{Comp. Meth. in Applied Math.}, 11(3), pp. 305--326, September
  2011.
\newblock \urlprefix\url{http://dx.doi.org/10.2478/cmam-2011-0017}.

\bibitem{Gruber2018}
Gruber, F.
\newblock \emph{Adaptive Source Term Iteration: A Stable Formulation for
  Radiative Transfer}.
\newblock Ph.D. thesis, RWTH Aachen University, 2018.
\newblock \urlprefix\url{http://dx.doi.org/10.18154/RWTH-2018-230893}.

\bibitem{GKM2017}
Gruber, F., Klewinghaus, A. and Mula, O.
\newblock The {DUNE-DPG} Library for Solving {PDE}s with {Discontinuous
  Petrov--Galerkin} Finite Elements.
\newblock \emph{Archive of Numerical Software}, 5(1), pp. 111--128, 6~March
  2017.
\newblock \urlprefix\url{http://dx.doi.org/10.11588/ans.2017.1.27719}.

\bibitem{GK2010}
Guermond, J.-L. and Kanschat, G.
\newblock Asymptotic Analysis of Upwind Discontinuous {Galerkin} Approximation
  of the Radiative Transport Equation in the Diffusive Limit.
\newblock \emph{SIAM Journal on Numerical Analysis}, 48(1), pp. 53--78, 2010.
\newblock \urlprefix\url{http://dx.doi.org/10.1137/090746938}.

\bibitem{HG1941}
Henyey, L.~G. and Greenstein, J.~L.
\newblock Diffuse Radiation in the Galaxy.
\newblock \emph{The Astrophysical Journal}, 93, pp. 70--83, 1941.

\bibitem{JP1983}
Johnson, C. and Pitkäranta, J.
\newblock Convergence of a Fully Discrete Scheme for Two-Dimensional Neutron
  Transport.
\newblock \emph{SIAM Journal on Numerical Analysis}, 20(5), pp. 951--966,
  October 1983.
\newblock \urlprefix\url{http://dx.doi.org/10.1137/0720065}.

\bibitem{Kanschat2009}
Kanschat, G.
\newblock Solution of Radiative Transfer Problems with Finite Elements.
\newblock In Kanschat, Meinköhn, Rannacher and Wehrse (editors),
  \emph{Numerical Methods in Multidimensional Radiative Transfer}, pp. 49--98.
  Springer, 2009.
\newblock \urlprefix\url{http://dx.doi.org/10.1007/978-3-540-85369-5_5}.

\bibitem{MY2008}
Modest, M.~F. and Yang, J.
\newblock Elliptic {PDE} Formulation and Boundary Conditions of the Spherical
  Harmonics Method of Arbitrary Order for General Three-dimensional Geometries.
\newblock \emph{Journal of Quantitative Spectroscopy and Radiative Transfer},
  109(9), pp. 1641--1666, 2008.
\newblock \urlprefix\url{http://dx.doi.org/10.1016/j.jqsrt.2007.12.018}.

\bibitem{Kharroubi1997}
Mokhtar-Kharroubi, M.
\newblock \emph{Mathematical Topics in Neutron Transport Theory: New Aspects},
  volume~46 of \emph{Series on Advances in Mathematics for Applied Sciences}.
\newblock World Scientific, Singapore, 1997.
\newblock \urlprefix\url{http://dx.doi.org/10.1142/3288}.

\bibitem{RGK2012}
Ragusa, J.~C., Guermond, J.-L. and Kanschat, G.
\newblock A Robust {$S_N$-DG}-approximation for Radiation Transport in
  Optically Thick and Diffusive Regimes.
\newblock \emph{Journal of Computational Physics}, 231(4), pp. 1947--1962,
  2012.
\newblock \urlprefix\url{http://dx.doi.org/10.1016/j.jcp.2011.11.017}.

\end{thebibliography}
\end{small}
\end{document}